\begin{document}

\titlerunning{Almost periodic evolution equations}

\title*{Almost Periodic Solutions of
Evolution Differential Equations with Impulsive Action }

\author{Viktor Tkachenko}

\institute{Viktor Tkachenko \at Institute of Mathematics National Academy of Sciences of Ukraine,
Tereshchenkivska str. 3, Kiev, Ukraine \email{vitk@imath.kiev.ua}}

\maketitle

\abstract*{In an abstract Banach space we study conditions for the existence of piecewise continuous, almost periodic solutions
for semilinear impulsive differential equation
with fixed and non-fixed moments of impulsive action.}

\abstract{In an abstract Banach space we study conditions for the existence of piecewise continuous, almost periodic solutions
for semilinear impulsive differential equation
with fixed and non-fixed moments of impulsive action.}

\section{Introduction}
\label{sec:1}
We consider the problem of the existence of piecewise continuous, almost periodic solutions for the nonlinear impulsive differential equation
\begin{eqnarray} \label{ban1}
& & \frac{du}{dt} + (A + A_1(t)u = f(t, u), \quad t \not= \tau_j(u), \\
& & u(\tau_j(u) + 0) - u(\tau_j(u)) = B_j u + g_j(u), \quad j \in  Z, \label{ban2}
\end{eqnarray}
where $u: {R} \to X,$ $X$ is a Banach space, $A$ is a sectorial operator in $X$, $A_1(t)$ is some operator-value function,
$\{B_j\}$ is a sequence of some closed operators, and $\{\tau_j(u)\}$ is an unbounded and strictly increasing sequence of real numbers for all $u$
from some domain of space $X.$

We use the concept of piecewise continuous almost periodic functions  proposed in \cite{HW}.
Points of discontinuities of these functions coincide to points of impulsive actions $\{\tau_j\}.$
We mention the remarkable paper \cite{SPA}, where a number of important statements about almost periodic pulse system was proved.
Then these results  were included in the well-known monograph \cite{SP}.
Today there are many articles related to the study of almost periodic impulsive systems ( see, for example,  \cite{A,AP,AP2,MT1,MT2,PR,STr,S,Tk}).
In the papers \cite{HAR,SA,T3,Tr} almost periodic solutions for abstract impulsive differential equations in the Banach
space are investigated.

In this paper, we consider the semilinear abstract impulsive differential equation in a Banach space with sectorial operator in the linear part of
the equation and some closed operators in linear parts of impulsive action.
Using fractional powers of operator $A$ and corresponding interpolation spaces allows us to consider strong or classical solutions.
Note that such equations with periodic right-hand sides were first studied in \cite{RT}.
In equations with  nonfixed moments of impulsive action, points of discontinuity depend on solutions; that is, every solution has
its own points of  discontinuity. Moreover, a solution can intersect the surface of impulsive action several times
 or even an infinite number of times. This is the so-called pulsation or beating phenomenon.
We will assume that solutions of (\ref{ban1}), (\ref{ban2})
don't have beating at the surfaces $t = \tau_j(u);$ in other words, solutions intersect each surface no more than once.
For impulsive systems in the finite-dimensional case, there are several sufficient conditions that allow us to
exclude the phenomenon of pulsation (see, \cite{SP}, \cite{SPT}). In infinite dimensional case analogous conditions cannot easily be verified.
In every concrete case one needs a separate investigation.

We assume that the corresponding linear homogeneous equation has an exponential dichotomy.
The definition of exponential dichotomy for an impulsive evolution equation corresponds to the definition of
exponential dichotomy for continuous evolution equations in an infinite-dimensional Banach space \cite{ChL,H,PS}.
We require that only solutions of a linear system from an unstable manifold can be
unambiguously extended to the negative semiaxis.

Robustness is an impotent property of the exponential dichotomy \cite{ChL,KRT,PS}. We mention the papers \cite{BV,NP,T1,T2}
where the robustness of the exponential dichotomy for impulsive systems by small perturbations of right-hand sides is proved.
In this chapter we prove robustness of the exponential dichotomy also by the small perturbation of points of impulsive
action. We use a change of time in the system.
Then approximation of the impulsive system by difference systems (see \cite{H}) can be used.
If a linear homogeneous equation is exponentially stable, we prove stability of the almost periodic solution
of nonlinear equation (\ref{ban1}), (\ref{ban2}). Following \cite{RT}, we use the generalized Gronwall inequality,
taking into account singularities in integrals and impulsive influences.

This chapter is organized as follows. In Sect.7.2 we present some preliminary
definitions and results. In Sect.7.3, we study an exponential dichotomy of impulsive
linear equations. Section 7.4 is devoted to studying the existence and stability of
almost periodic solutions in linear inhomogeneous equations with impulsive action
and semilinear impulsive equations with fixed moments of impulsive action. In
Sect.7.5 we consider impulsive evolution equations with nonfixed moments of
impulsive action. In Sect.7.6 we discuss the case of unbounded operators $B_j$ in linear
parts of impulsive action.


\section{Preliminaries}

Let $(X, \|.\|)$ be an abstract Banach space and ${R}$ and ${Z}$ be the sets of real
and integer numbers, respectively.

We will consider the space $\mathcal{PC}(J,X), \ J \subset {R},$
of all piecewise continuous functions $x: J \to X$ such that

i) the set $\{ \tau_j \in J: \tau_{j+1} > \tau_j, j \in {Z}\}$ of discontinuities of $x$ has no finite limit points;

ii) $x(t)$ is left-continuous $x(\tau_j - 0) = x(\tau_j)$ and there exists $\lim_{t \to \tau_j + 0} x(t) = x(\tau_j + 0).$

We will use the norm $\|x\|_{PC} = \sup_{t \in J}\|x(t)\|$ in the space $\mathcal{PC}(J,X)$.

\begin{definition} \label{def1}
The integer $p$ is called an $\varepsilon$-almost period of a sequence $\{x_k\}$ if
$\|x_{k+p} - x_k\| < \varepsilon$ for any $k \in {Z}.$ The sequence $\{x_k\}$ is almost periodic if for any $\varepsilon > 0$
there exists a relatively dense set of its $\varepsilon$-almost periods.
\end{definition}

\begin{definition} \label{def2}
The strictly increasing sequence $\{\tau_k\}$ of real numbers has uniformly almost periodic sequences of differences if for any $\varepsilon > 0$
there exists a relatively dense set of $\varepsilon$-almost periods common for all sequences
$\{\tau^j_k\},$ where $\tau^j_k = \tau_{k+j} - \tau_{k}, j \in {Z}.$
\end{definition}

By Samoilenko and Trofimchuk \cite{STr}, the sequence $\{\tau_k\}$ has uniformly almost periodic sequences of differences if and only if $\tau_k = a k + c_k,$
where $\{c_k\}$ is an almost periodic sequence and $a$ is a positive real number.
\vspace{2mm}

By Lemma 22 (\cite{SP}, p. 192), for a sequence $\{\tau_j\}$ with uniformly almost periodic sequences of differences there exists the
limit
\begin{eqnarray} \label{limp}
\lim_{T \to \infty} \frac{{i}(t,t+T)}{T} = p
\end{eqnarray}
uniformly with respect to $t \in {R},$ where ${i}(s, t)$ is the number of the points $\tau_k$
lying in the interval $(s, t).$ Then for each $q > 0$ there exists a positive integer $N$
such that on each interval of length  $q$ there are no more then $N$ elements of the sequence $\{\tau_j\}$;
that is, ${i}(s, t) \le N(t-s)/q + N.$

Also for sequence $\{\tau_j\}$ with uniformly almost periodic sequences of differences there exists $\Theta > 0$
such that $\tau_{j+1} - \tau_j \le \Theta, j \in {Z}.$

\begin{definition} \label{def3}
The function $\varphi \in \mathcal{PC}({R},X)$ is said to be W-almost periodic if

i) the strictly increasing sequence $\{\tau_k\}$ of discontinuities of $\varphi(t)$ has uniformly almost periodic sequences of differences;

ii) for any $\varepsilon > 0$ there exists a positive number $\delta = \delta(\varepsilon)$ such that if the points
$t'$ and $t''$ belong to the same interval of continuity and $|t' - t''| < \delta$ then $\|\varphi(t') - \varphi(t'')\| < \varepsilon;$

iii) for any $\varepsilon > 0$ there exists a relatively dense set $\Gamma$ of $\varepsilon$-almost periods such that if
$\tau \in \Gamma,$ then $\|\varphi(t +\tau) - \varphi(t)\| < \varepsilon$ for all $t \in {R}$ that satisfy
the condition $|t - t_k| \ge \varepsilon, k \in {Z}.$
\end{definition}
\vspace{1mm}

We consider the impulsive equation (\ref{ban1}), (\ref{ban2}) with the following assumptions:

${\bf (H1)}$ $A$ is a sectorial operator acting in $X$ and $\inf\{ Re \mu: \ \mu \in \sigma(A)\} \ge \delta > 0,$
where $\sigma(A)$ is the spectrum of $A.$
Consequently, the fractional powers of $A$ are well defined, and one can consider
the spaces $X^\alpha = D(A^\alpha)$ for $\alpha \ge 0$ endowed with the norms $\|x\|_\alpha = \|A^\alpha x\|$.
\vspace{1mm}

${\bf (H2)}$ The function $A_1(t): {R} \to L(X^\alpha,X)$ is Bohr almost periodic and  H\"{o}lder continuous, $\alpha \ge 0,$
$L(X^\alpha,X)$ is the space of linear bounded operators $X^\alpha \to X$.
\vspace{1mm}

${\bf (H3)}$ We shell use the notation $U^\alpha_\varrho = \{ x \in X^\alpha: \ \| x\|_\alpha \le \varrho\}.$
Assume that the sequence $\{\tau_j(u)\}$ of functions $\tau_j: U^\alpha_\varrho \to {R}$
has uniformly almost periodic sequences of differences
uniformly with respect to $u \in U^\alpha_\varrho$ and there exists $\theta > 0$ such that $\inf_{u} \tau_{j+1}(u) -
\sup_{u} \tau_{j}(u) \ge \theta$ for all $u \in U^\alpha_\varrho$ and $j \in {Z}.$
Also, there exists $\Theta > 0$
such that $\sup_u\tau_{j+1}(u) - \inf_u\tau_j(u) \le \Theta$ for all $j \in {Z}$ and $u \in U^\alpha_\varrho.$
\vspace{1mm}

${\bf (H4)}$ The sequence $\{B_j\}$ of bounded operators is almost periodic and there exists $b > 0$ such that
$\|B_j u\|_\alpha \le b \|u\|_\alpha$ for $j \in {Z}, \alpha \ge 0$ and $u \in X^\alpha.$
\vspace{1mm}

${\bf (H5)}$ The function $f(t,u): \ {R} \times U^\alpha_\rho \to X$ is continuous in $u$ and is  locally H\"{o}lder continuous 
and W-almost periodic in $t$ uniformly with respect to $u \in U^\alpha_\rho.$ 

${\bf (H6)}$  The sequence $\{g_j(u)\}$ of continuous functions $U^\alpha_\rho \to X^\alpha$ is almost periodic
uniformly with respect to $u \in U^\alpha_\rho.$
\vspace{1mm}

\begin{remark}
Assumption ${\bf (H4)}$ is satisfied if, for example, $B_j A = A B_j$ for all $j \in Z.$
We assume that operators $B_j$ are bounded. Many of our results are valid if the
$B_j$ are unbounded closed operators $X^{\alpha+\gamma} \to X^\alpha$ for $\alpha \ge 0$ and some $\gamma > 0.$
We discuss this case in the last section.
\end{remark}

We use the following generalization of Lemma 7 from \cite{HW}, p. 288 (also, see \cite{HPTT} and \cite{SPA}):

\begin{lemma} \label{lem1}
Assume that a sequence of real numbers $\{\tau_j\}$ has uniformly almost periodic sequences of differences,
the sequence $\{B_j\}, B_j \in X,$ is almost periodic and the function
$f(t): {R} \to X$ is W-almost periodic.
Then for any $\varepsilon > 0$ there exists a such $l = l(\varepsilon) > 0$
that for any interval $J$ of length $l$ there are such $r \in J$ and an integer $q$ that the following relations hold:
$$|\tau_{i+q} - \tau_i - r| < \varepsilon, \ \| B_{i+q} - B_i\| < \varepsilon, \ i \in {Z},$$
$$\|f(t+r) - f(t)\| < \varepsilon, \
t \in R, \ |t - \tau_j| > \varepsilon, \ j \in Z.$$
\end{lemma}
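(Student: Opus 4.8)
The plan is to produce a single relatively dense set of reals that serves, up to $\varepsilon$, as a simultaneous almost period for all three objects, exploiting the decomposition $\tau_k = ak + c_k$ (with $a>0$ and $\{c_k\}$ almost periodic) furnished by Samoilenko and Trofimchuk to convert integer index-shifts of the sequences into genuine time-shifts. The guiding observation is that if $q$ is an $\varepsilon$-almost period of the sequence $\{c_k\}$ and we set $r := aq$, then
\[
|\tau_{i+q} - \tau_i - r| = |a(i+q) + c_{i+q} - ai - c_i - aq| = |c_{i+q} - c_i| < \varepsilon ,
\]
so the first of the three required estimates holds automatically once $r$ is tied to $q$ through $r = aq$. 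Thus the whole difficulty is to choose one integer $q$ that is simultaneously compatible with $\{c_k\}$, with $\{B_j\}$, and with $f$ through the time-shift $r=aq$.

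First I would dispose of the two discrete conditions. The sequences $\{c_k\}$ and $\{B_j\}$ are both almost periodic, so by Definition~\ref{def1} each possesses a relatively dense set of integer $\varepsilon$-almost periods; since the intersection of finitely many relatively dense sets of integer almost periods is again relatively dense, there is a relatively dense set $Q \subset {Z}$ of integers $q$ for which simultaneously $|c_{i+q}-c_i| < \varepsilon$ and $\|B_{i+q}-B_i\| < \varepsilon$ for all $i$. By the displayed identity this already secures the first two lines of the conclusion for every $q\in Q$ together with the associated $r=aq$, and the numbers $\{aq : q\in Q\}$ are relatively dense in ${R}$.

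The remaining and genuinely delicate point is to arrange, within the same choice, that $r=aq$ is also an almost period of $f$ in the $W$-sense, i.e.\ $\|f(t+r)-f(t)\| < \varepsilon$ off the $\varepsilon$-neighborhoods of the jump points. This is the main obstacle, because the almost periods of $f$ guaranteed by Definition~\ref{def3}(iii) are a priori arbitrary reals, whereas our candidates $aq$ are constrained to lie near integer multiples of $a$. What rescues the argument is that the discontinuities of $f$ are precisely the $\tau_k$: any $\varepsilon$-almost period $\tau$ of $f$ must carry the jump pattern approximately into itself, which forces $\tau$ to be close to some difference $\tau_{k+q}-\tau_k \approx aq$. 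To turn this heuristic into a proof I would reduce to the classical theorem on common almost periods of finitely many Bohr almost periodic functions: associate to the sequences $\{c_k\}$ and $\{B_j\}$ continuous almost periodic functions in the manner of \cite{SP,SPA}, adjoin them to $f$, and apply the finite-intersection property of relatively dense sets of $\varepsilon$-almost periods to this combined family. The association is designed so that a real translation number $r$ of the combined family is already close to $aq$ for an integer almost period $q$; reading off this $q$, and absorbing the rounding error into the constants by means of the uniform gap bound $\tau_{j+1}-\tau_j \le \Theta$ and the density estimate for the counting function $i(s,t)$ recorded above, yields the integer $q$ and the shift $r$ with all three estimates holding at once. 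Finally I would take $l(\varepsilon)$ to be the inclusion length of the resulting relatively dense set of admissible $r$, which gives the stated conclusion. The careful bookkeeping — matching the excluded $\varepsilon$-neighborhoods of the $\tau_j$ in Definition~\ref{def3}(iii) against the shift $r$, and controlling the passage from the real translation number to the integer $q$ — is where essentially all the effort lies.
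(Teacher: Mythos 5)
First, note that the paper does not prove this lemma at all: it is quoted as a generalization of Lemma~7 of \cite{HW} (see also \cite{HPTT} and \cite{SPA}), so there is no in-paper argument to compare against; your proposal has to stand on its own as a proof.

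As such, it has a genuine gap. The part you do carry out is fine: with $\tau_k = ak + c_k$ and $r = aq$ the identity $\tau_{i+q}-\tau_i - r = c_{i+q}-c_i$ is correct, and common integer $\varepsilon$-almost periods of $\{c_k\}$ and $\{B_j\}$ do form a relatively dense set --- although not for the reason you give. The intersection of two relatively dense sets need not be relatively dense (the even and the odd integers are a counterexample); you need the theorem that finitely many almost periodic sequences admit a relatively dense set of \emph{common} $\varepsilon$-almost periods, which is proved via a Bochner-type compactness argument for the vector sequence $(c_k, B_k)$, not by intersecting the individual almost-period sets. The more serious problem is the third condition, which you correctly identify as the crux and then do not prove. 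You propose to ``reduce to the classical theorem on common almost periods of finitely many Bohr almost periodic functions'' by associating continuous almost periodic functions to the sequences and adjoining $f$; but $f$ is only W-almost periodic --- piecewise continuous with possible jumps at the $\tau_k$ --- so the classical Bohr theorem does not apply to the combined family, and the ``association'' that is supposed to force a real translation number of the family to be close to $aq$ is never constructed. Your heuristic that an $\varepsilon$-almost period of $f$ must approximately preserve the jump pattern and hence be close to some $\tau_{k+q}-\tau_k$ also fails in general, since Definition~\ref{def3} does not require $f$ to actually jump at any $\tau_k$. What is needed (and what the cited Lemma~7 of \cite{HW} and the corresponding lemmas in \cite{SP,SPA} actually supply) is a joint almost-periodicity argument for the pair consisting of the W-almost periodic function and the sequence of its discontinuity points --- typically via a Bochner-type criterion adapted to the space $\mathcal{PC}$, or by showing that the family of functions $s \mapsto f(\tau_k + s)$ on the intervals of continuity is an almost periodic sequence and invoking the common-almost-period theorem for that enlarged family. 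Until that step is supplied, the simultaneous choice of $r$ and $q$ satisfying all three displayed inequalities is asserted rather than proved.
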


If $A$ is a sectorial operator then $(-A)$ is an infinitesimal generator of the analytical semigroup $e^{-At}.$
For every $x \in X^\alpha$ we get $e^{-At} A^\alpha x = A^\alpha e^{-At} x.$
Further, we shall use the inequalities (see \cite{H})
\begin{eqnarray*}
& & \| A^\alpha e^{-A t} \| \le C_\alpha t^{-\alpha} e^{-\delta t}, \ t > 0, \ \alpha > 0, \\
& & \| (e^{-A t} - I)u \| \le \frac{1}{\alpha}C_{1-\alpha} t^\alpha \|  A^\alpha u \|, \ t > 0, \ \alpha \in (0,1], \ u \in X^\alpha,
\end{eqnarray*}
where $C_\alpha \in {R}$ is nonnegative and bounded as $\alpha \to +0.$

\begin{definition} \label{def4}
The function $u(t): [t_0, t_1] \to X^\alpha$ is said to be a solution of the initial value problem $u(t_0) = u_0 \in X^\alpha$
for Eq. (\ref{ban1}), (\ref{ban2}) on $[t_0, t_1]$ if

(i) it is continuous in $[t_0, \tau_k], (\tau_k, \tau_{k+1}],..., (\tau_{k+s}, t_1]$ with the discontinuities of the first
kind at the moments $t = \tau_j$ of intersections with impulsive surfaces;

(ii) $u(t)$ is continuously differentiable in each of the intervals $(t_0, \tau_k),$ $(\tau_k, \tau_{k+1}),$ $..., (\tau_{k+s}, t_1)$
and satisfies Eqs. (\ref{ban1}) and (\ref{ban2}) if $t \in (t_0,t_1), t \not= \tau_j,$ and $t = \tau_j,$
respectively;

(iii) the initial-value condition $u(t_0) =u_0$ is fulfilled.
\end{definition}

We assume that solutions $u(t)$ of (\ref{ban1}), (\ref{ban2}) are left-hand-side continuous,
hence $u(\tau_j) = u(\tau_j - 0)$ at all points of impulsive action.

Also we assume that in the domain $U^\alpha_\rho$ solutions of (\ref{ban1}) and (\ref{ban2})
don't have beating at the surfaces $t = \tau_j(u);$ in other words, solutions intersect each surface no more then once.

\section{Exponential Dichotomy}

Together with Eq. (\ref{ban1}), (\ref{ban2}) we consider the corresponding linear homogeneous equation
\begin{eqnarray} \label{lin1}
& & \frac{du}{dt} + (A + A_1(t)) u = 0, \quad t \not= \tau_j, \\
& & \Delta u|_{t=\tau_j} = u(\tau_j + 0) - u(\tau_j) = B_j u(\tau_j), \quad j \in Z, \label{lin2}
\end{eqnarray}
where $\tau_j = \tau_j(0).$
Denote by $V(t,s)$ the evolution operator of the linear equation without impulses (\ref{lin1}).
It satisfies $V(\tau,\tau) = I, \ V(t,s)V(s,\tau) = V(t,\tau), \ t\ge s \ge \tau.$

By Theorem 7.1.3 (\cite{H}, p.190), $V(t,\tau)$
is strongly continuous with values in $L(X^\beta)$ for any $0 \le \beta < 1$ and
\begin{eqnarray} \label{evop1}
\| V(t,\tau)x\|_\beta \le L_Q(t - \tau)^{(\gamma - \beta)_-}\|x\|_\gamma,
\end{eqnarray}
where $(\gamma - \beta)_- = \min(\gamma - \beta, 0), \ t - \tau \le Q, \ L_Q = L_Q(Q).$
Moreover,
\begin{eqnarray} \label{evop2}
\| V(t,\tau)x - x\|_\beta \le L_{Q}(t - \tau)^{\nu}\|x\|_{\beta+\nu}, \quad \nu > 0, \ \beta + \nu \le 1.
\end{eqnarray}

Using the proof of Lemma 7.1.1 from \cite{H}, p. 188, one can verify the following
generalized Gronwall inequality
\begin{lemma} \label{lem-gr}
Suppose $0 \le \alpha, \beta < 1, a_1 \ge 0, a_2 \ge 0, b \ge 0,  0 < Q < \infty$ and $y(t)$ is nonnegative function locally integrable on $0 \le t < Q$
with
$$y(t) \le a_1 + a_2 t^{-\alpha} + b \int_0^t (t-s)^{-\beta}y(s)ds$$
on this interval; then there is a constant $\tilde C = \tilde C(\beta, b, Q) < \infty$ such that
\begin{eqnarray} \label{evop22}
y(t) \le \left( a_1 + \frac{a_2}{(1-\alpha)t^\alpha}\right) \tilde C(\beta, b, Q).
\end{eqnarray}
\end{lemma}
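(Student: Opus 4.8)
To prove Lemma \ref{lem-gr}, the plan is to follow the iteration argument for the singular Gronwall inequality in \cite{H} (Lemma 7.1.1), but to keep explicit track of the two inhomogeneous terms $a_1$ and $a_2 t^{-\alpha}$ so as to isolate the factor $(1-\alpha)^{-1}$ appearing in (\ref{evop22}). Write $a(t) = a_1 + a_2 t^{-\alpha}$ and let $B$ denote the Volterra operator $(B\phi)(t) = b\int_0^t (t-s)^{-\beta}\phi(s)\,ds$, so that the hypothesis reads $y \le a + By$. First I would iterate this inequality $N$ times to obtain $y \le \sum_{n=0}^{N-1} B^n a + B^N y$, the iteration being legitimate because $B$ maps nonnegative locally integrable functions to nonnegative functions and is monotone.

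The next step is to evaluate the iterated kernels. Using the Beta-function identity $\int_s^t (t-r)^{p-1}(r-s)^{q-1}\,dr = (t-s)^{p+q-1} B(p,q)$ with $B(p,q) = \Gamma(p)\Gamma(q)/\Gamma(p+q)$, an induction shows that $B^n$ acts as convolution against the kernel $\frac{(b\Gamma(1-\beta))^n}{\Gamma(n(1-\beta))}(t-s)^{n(1-\beta)-1}$. Since $0 \le \beta < 1$ we have $1-\beta > 0$, and Stirling's formula shows that $\Gamma(n(1-\beta))$ grows faster than any geometric factor $\bigl(b\Gamma(1-\beta)Q^{1-\beta}\bigr)^n$; hence the series $\sum_n B^n$ converges uniformly on $[0,Q]$, and by the same domination together with the local integrability of $y$ the remainder $B^N y$ tends to $0$ as $N \to \infty$. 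This yields $y(t) \le a(t) + \int_0^t E'(t-s)\,a(s)\,ds$ with $E'(\tau) = \sum_{n\ge 1} \frac{(b\Gamma(1-\beta))^n}{\Gamma(n(1-\beta))}\tau^{n(1-\beta)-1}$, a Mittag--Leffler-type function bounded on $[0,Q]$.

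It remains to substitute $a(s) = a_1 + a_2 s^{-\alpha}$ and to bound the two resulting sums on $[0,Q]$. The constant part contributes $a_1\sum_{n\ge1} \frac{(b\Gamma(1-\beta))^n}{\Gamma(n(1-\beta))}\frac{t^{n(1-\beta)}}{n(1-\beta)}$, which is at most $a_1\,C_1(\beta,b,Q)$. For the singular part one computes $\int_0^t (t-s)^{n(1-\beta)-1} s^{-\alpha}\,ds = t^{n(1-\beta)-\alpha} B(n(1-\beta),1-\alpha)$, so that this contribution equals $a_2\Gamma(1-\alpha)\sum_{n\ge1} \frac{(b\Gamma(1-\beta))^n}{\Gamma(n(1-\beta)+1-\alpha)}t^{n(1-\beta)-\alpha}$, which on $[0,Q]$ is bounded by $a_2 t^{-\alpha}\Gamma(1-\alpha)\,C_2(\beta,b,Q)$. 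This is the one place where the factor $(1-\alpha)^{-1}$ must be extracted: since $\Gamma(1-\alpha) = \Gamma(2-\alpha)/(1-\alpha)$ and $\Gamma(2-\alpha) \le 1$ for $\alpha \in [0,1)$, the bound is dominated by $\frac{a_2}{(1-\alpha)t^\alpha}\,C_2(\beta,b,Q)$. Combining the direct term $a(t)$ with these two estimates and setting $\tilde C(\beta,b,Q) = 1 + C_1(\beta,b,Q) + C_2(\beta,b,Q)$ gives the claimed inequality (\ref{evop22}).

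The main obstacle I expect is not any single computation but the careful bookkeeping of the $\alpha$-dependence together with the analytic justification of the iteration. Concretely, one must verify the Beta-function composition rule under only local integrability, confirm uniform convergence of the Mittag--Leffler series via Stirling on the compact interval $[0,Q]$, and carry the parameter $\alpha$ through the Beta function $B(n(1-\beta),1-\alpha)$ so that the singular exponent $t^{-\alpha}$ and the factor $(1-\alpha)^{-1}$ emerge cleanly, rather than being absorbed into a constant that would blow up as $\alpha \to 1$.
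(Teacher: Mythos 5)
Your proposal is correct and is essentially the approach the paper intends: the paper gives no proof of its own but simply invokes the iteration argument of Lemma 7.1.1 in Henry's book, which is exactly the Volterra-operator iteration, Beta-function computation of the iterated kernels, and Mittag--Leffler summation that you carry out. Your additional bookkeeping of the $\alpha$-dependence (via $B(n(1-\beta),1-\alpha)$ and $\Gamma(1-\alpha)=\Gamma(2-\alpha)/(1-\alpha)$ with $\Gamma(2-\alpha)\le 1$) correctly isolates the factor $(1-\alpha)^{-1}$ in (\ref{evop22}) with $\tilde C$ independent of $\alpha$, which is the only point the paper leaves implicit.
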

Note that inequality (\ref{evop22}) can be rewritten as
\begin{eqnarray} \label{evop222}
y(t) \le \left( a_1 + \frac{a_2}{t^\alpha}\right) \tilde C_1, \ \tilde C_1 = \frac{\tilde C(\beta, b, Q)}{1-\alpha}.
\end{eqnarray}

We will use the following perturbation lemma.

\begin{lemma} \label{lem2}
Let us consider the perturbed equation
\begin{eqnarray} \label{evol3}
 \frac{du}{dt} + (\gamma A + A_2(t)) u = 0,
\end{eqnarray}
where $\gamma = Const > 0, \ A_2(t): {R} \to L(X^\alpha, X).$

Then for $Q > 0,$ there exists $\varepsilon_0 > 0$ such that for all $\varepsilon \le \varepsilon_0$
and $|\gamma - 1| \le \varepsilon, \ \sup_{t}\|A_1(t) - A_2(t)\|_{L(x^\alpha,X)} \le \varepsilon$
the evolution operators $V(t,s)$ of (\ref{lin1}) and $V_1(t,s)$ of (\ref{evol3}) satisfy
\begin{eqnarray} \label{evol4}
\|V(t,s) - V_1(t,s)\|_\alpha \le R_1(\varepsilon), \ t - s \le Q,
\end{eqnarray}
with $R_1(\varepsilon)$ depends on $Q, \alpha,$ and $R_1(\varepsilon) \to 0$ as $\varepsilon \to 0.$
\end{lemma}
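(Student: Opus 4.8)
The plan is to compare the two evolution operators through their variation-of-constants representations against the principal analytic semigroups. Write $T(t)=e^{-At}$ and $T_\gamma(t)=e^{-\gamma A t}=e^{-A\gamma t}$ for the semigroups generated by $-A$ and $-\gamma A$; the operator $\gamma A$ is again sectorial with spectrum bounded away from the imaginary axis by $\gamma\delta>0$, so $T_\gamma$ enjoys the same smoothing bounds as $T$ with $\delta$ replaced by $\gamma\delta$. Treating $A_1(t)$ and $A_2(t)$ as lower-order perturbations of the principal parts, $V$ and $V_1$ satisfy the Duhamel identities
\begin{eqnarray*}
V(t,s)x &=& T(t-s)x - \int_s^t T(t-\sigma)A_1(\sigma)V(\sigma,s)x\,d\sigma, \\
V_1(t,s)x &=& T_\gamma(t-s)x - \int_s^t T_\gamma(t-\sigma)A_2(\sigma)V_1(\sigma,s)x\,d\sigma.
\end{eqnarray*}
Setting $W(t,s)=V(t,s)-V_1(t,s)$ and adding and subtracting $T_\gamma A_1 V$ and $T_\gamma A_2 V$ inside the integrals, I obtain a closed integral equation for the difference,
\begin{eqnarray*}
W(t,s)x &=& [T(t-s)-T_\gamma(t-s)]x \\
&& - \int_s^t [T-T_\gamma](t-\sigma)A_1(\sigma)V(\sigma,s)x\,d\sigma \\
&& - \int_s^t T_\gamma(t-\sigma)[A_1(\sigma)-A_2(\sigma)]V(\sigma,s)x\,d\sigma \\
&& - \int_s^t T_\gamma(t-\sigma)A_2(\sigma)W(\sigma,s)x\,d\sigma,
\end{eqnarray*}
in which the first three terms are the inhomogeneous ``sources'' and the last is the Gronwall feedback.

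The crux is to estimate the semigroup difference $T(\eta)-T_\gamma(\eta)$ in the fractional norms, and this is where the hypothesis $|\gamma-1|\le\varepsilon$ enters. Using $\frac{d}{d\mu}e^{-\mu A\eta}=-A\eta\,e^{-\mu A\eta}$ I write $e^{-A\eta}-e^{-\gamma A\eta}=\int_1^\gamma A\eta\,e^{-\mu A\eta}\,d\mu$, and then, with the smoothing bound $\|A^{\alpha+1}e^{-As}\|\le C_{\alpha+1}s^{-(\alpha+1)}e^{-\delta s}$ and $\mu\ge 1/2$ once $\varepsilon_0<1/2$, I get for $z\in X$
\[ \|A^\alpha[T(\eta)-T_\gamma(\eta)]z\| \le \int_1^\gamma \eta\,\|A^{\alpha+1}e^{-A\mu\eta}z\|\,d\mu \le C\,|\gamma-1|\,\eta^{-\alpha}\|z\|. \]
In the free term, where the argument $x$ already lies in $X^\alpha$, I pull $A^\alpha$ through and use only $\|Ae^{-As}\|\le C_1 s^{-1}e^{-\delta s}$, so the singularity cancels entirely and $\|[T(\eta)-T_\gamma(\eta)]x\|_\alpha\le 2C_1|\gamma-1|\,\|x\|_\alpha$.

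With these bounds and (\ref{evop1}) (which gives $\|V(\sigma,s)x\|_\alpha\le L_Q\|x\|_\alpha$, whence $\|A_1(\sigma)V(\sigma,s)x\|\le M L_Q\|x\|_\alpha$ with $M=\sup_t\|A_1(t)\|_{L(X^\alpha,X)}<\infty$ by ${\bf (H2)}$), together with $\|A_1-A_2\|_{L(X^\alpha,X)}\le\varepsilon$ and $\|A^\alpha e^{-\gamma A\eta}\|\le C_\alpha(\gamma\eta)^{-\alpha}e^{-\gamma\delta\eta}\le 2^\alpha C_\alpha\eta^{-\alpha}$, each of the three source terms is bounded by a constant times $\varepsilon\|x\|_\alpha$ after integrating the integrable singularity $(t-\sigma)^{-\alpha}$ over $[s,t]\subset[s,s+Q]$ (here $0\le\alpha<1$). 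The feedback term is bounded by $b\int_s^t(t-\sigma)^{-\alpha}\|W(\sigma,s)x\|_\alpha\,d\sigma$ with $b=2^\alpha C_\alpha(M+1)$ independent of $\varepsilon$. Hence, for $\|x\|_\alpha\le1$, the function $y(r):=\|W(s+r,s)x\|_\alpha$ satisfies $y(r)\le K\varepsilon + b\int_0^r(r-\rho)^{-\alpha}y(\rho)\,d\rho$ on $[0,Q]$ with $K$ independent of $\varepsilon$. Applying the generalized Gronwall inequality (Lemma~\ref{lem-gr}) with $a_1=K\varepsilon$, $a_2=0$, $\beta=\alpha$ yields $y(r)\le K\tilde C(\alpha,b,Q)\varepsilon$, so
\[ \|V(t,s)-V_1(t,s)\|_\alpha \le R_1(\varepsilon):=K\,\tilde C(\alpha,b,Q)\,\varepsilon, \quad t-s\le Q, \]
which tends to $0$ as $\varepsilon\to0$, as claimed.

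I expect the main obstacle to be the semigroup-difference estimate: one must exploit the extra factor $\eta$ produced by differentiating $e^{-\mu A\eta}$ in $\mu$ to lower the order of the singularity from $\eta^{-(\alpha+1)}$ (non-integrable) to $\eta^{-\alpha}$ (integrable), while simultaneously extracting the smallness factor $|\gamma-1|\le\varepsilon$ as the length of the $\mu$-interval; without this trick the scaling perturbation of the principal part could not be absorbed. Everything else is bookkeeping and a single application of the Gronwall lemma. I also implicitly use that $V_1$ exists and obeys the Duhamel identity above, which follows from the sectoriality of $\gamma A$ exactly as for $V$; note that no size estimate on $V_1$ itself is needed, since it enters the argument only through $W$.
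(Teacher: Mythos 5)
Your proof is correct and follows essentially the same route as the paper: the same Duhamel decomposition of $V-V_1$ into three small source terms plus a Gronwall feedback term, closed by Lemma~\ref{lem-gr}. The only (harmless) variation is how the semigroup difference is bounded — you integrate $\frac{d}{d\mu}e^{-\mu A\eta}$ over $\mu\in[1,\gamma]$ to extract the factor $|\gamma-1|$ and lower the singularity to $\eta^{-\alpha}$, whereas the paper factors $e^{-A\eta}-e^{-A\gamma\eta}=(I-e^{-A(\gamma-1)\eta})e^{-A\eta}$ and invokes the bound $\|(e^{-At}-I)u\|\le \frac{1}{\nu}C_{1-\nu}t^{\nu}\|A^{\nu}u\|$.
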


\begin{proof} For definiteness let $\gamma > 1.$
Solutions $x(t)$ and $y(t)$ of Eqs. (\ref{lin1}) and (\ref{evol3}) satisfy the following integral equations
\begin{eqnarray*}
x(t) = e^{-A(t-t_0)}x_0 + \int_{t_0}^t e^{-A(t-s)}A_1(s)x(s)ds
\end{eqnarray*}
and
\begin{eqnarray*}
y(t) = e^{-A\gamma(t-t_0)}x_0 + \int_{t_0}^t e^{-A\gamma(t-s)}A_2(s)y(s)ds.
\end{eqnarray*}
Then
\begin{eqnarray*}
& & \|x(t) - y(t)\|_\alpha \le \|(I - e^{-A(\gamma -1)(t-t_0)})A^{\alpha}e^{-A(t-t_0)}x_0\| \\
& & + \int_{t_0}^t \|(I - e^{-A(\gamma -1)(t-s)})A^{\alpha}e^{-A(t-s)}A_1(s)x(s)\|ds  \\
& & + \int_{t_0}^t \|A^{\alpha}e^{-A\gamma(t-s)}(A_1(s) - A_2(s))x(s)\|ds  \\
& & + \int_{t_0}^t \|A^{\alpha}e^{-A\gamma(t-s)}A_2(s)(x(s) - y(s))\|ds  \\
& & \le a_1(\varepsilon)\|x_0\|_\alpha + a_2 \int_{t_0}^t (t - s)^{-\alpha}\|x(s) - y(s)\|_\alpha ds,
 \end{eqnarray*}
where $a_2 = C_{\alpha}\sup_s\|A_1(s)\|_{L(X^\alpha,X)}$ and
$a_1(\varepsilon) \to 0$ as $\varepsilon \to 0.$
By Lemma \ref{lem-gr}, there exists positive constant $K_1$ depending on $\alpha$ and $Q$ such that
$$\|x(t) - y(t)\|_\alpha \le K_1 a_1(\varepsilon)\|x_0\|_\alpha = R_2(\varepsilon)\|x_0\|_\alpha.$$
\end{proof}

\begin{lemma} \label{lem2a}
Let us consider Eq. (\ref{lin1}) and
\begin{eqnarray} \label{lin11}
\frac{dv}{dt} + (A + A_2(t)) v = 0,
\end{eqnarray}
such that $A_2: {R} \to L(X^\alpha, X)$ is a bounded and H\"{o}lder continuous function.

Then for $Q > 0,$  there exists $\varepsilon_0 > 0$ such that for all $ \varepsilon \le \varepsilon_0$
and
$$\sup_{t}\|A_1(t) - A_2(t)\|_{L(X^\alpha, X)} \le \varepsilon$$
the evolution operators $V(t,s)$ of (\ref{lin1}) and $V_1(t,s)$ of  (\ref{lin11}) satisfy
\begin{eqnarray} \label{evol44}
\|(V(t,s) - V_1(t,s))u\|_\alpha \le R_3(\varepsilon)|t-t_0|^{1-2\alpha + \delta} \|u\|_\delta, \ t - s \le Q,
\end{eqnarray}
with $R_3(\varepsilon) = R_3(\varepsilon,Q,\alpha)$ and $R_3(\varepsilon) \to 0$ as $\varepsilon \to 0.$
\end{lemma}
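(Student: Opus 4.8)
The plan is to mirror the variation-of-constants argument of Lemma \ref{lem2}, but now to exploit the fact that here the leading operator is unperturbed ($\gamma = 1$), so that the free-evolution terms cancel exactly and a sharper, weight-carrying estimate becomes available. Writing $s = t_0$ and denoting by $x(t) = V(t,s)u$ and $y(t) = V_1(t,s)u$ the solutions of (\ref{lin1}) and (\ref{lin11}) with $x(s) = y(s) = u$, the mild formulations read
$$x(t) = e^{-A(t-s)}u + \int_s^t e^{-A(t-\xi)}A_1(\xi)x(\xi)\,d\xi, \qquad y(t) = e^{-A(t-s)}u + \int_s^t e^{-A(t-\xi)}A_2(\xi)y(\xi)\,d\xi.$$
Since both carry the identical free term $e^{-A(t-s)}u$, subtraction eliminates it, and I would split the remaining integrand in the standard way as
$$A_1(\xi)x(\xi) - A_2(\xi)y(\xi) = \bigl(A_1(\xi) - A_2(\xi)\bigr)x(\xi) + A_2(\xi)\bigl(x(\xi) - y(\xi)\bigr),$$
isolating a source term controlled by $\varepsilon$ and a feedback term carrying the unknown difference.

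Next I would apply $A^\alpha$ and estimate. Using $\|A^\alpha e^{-A\eta}\| \le C_\alpha \eta^{-\alpha}$ for $0 < \eta \le Q$, the coefficient bound $\|(A_1(\xi) - A_2(\xi))x(\xi)\| \le \varepsilon\|x(\xi)\|_\alpha$, and the smoothing estimate (\ref{evop1}) for the unperturbed operator, $\|x(\xi)\|_\alpha = \|V(\xi,s)u\|_\alpha \le L_Q(\xi-s)^{\delta-\alpha}\|u\|_\delta$ (valid in the regime $\delta \le \alpha$, where $(\delta-\alpha)_- = \delta-\alpha$), the source term is bounded by
$$C_\alpha \varepsilon L_Q\|u\|_\delta \int_s^t (t-\xi)^{-\alpha}(\xi-s)^{\delta-\alpha}\,d\xi = C_\alpha \varepsilon L_Q\, B(1-\alpha,\,\delta-\alpha+1)\,(t-s)^{1-2\alpha+\delta}\|u\|_\delta.$$
The Beta integral converges precisely because $\alpha < 1$ and $\delta - \alpha + 1 > 0$, and it is this computation that produces the exact exponent $1 - 2\alpha + \delta$ claimed in (\ref{evol44}), with a prefactor proportional to $\varepsilon$. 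For the feedback term $\|A_2(\xi)\|_{L(X^\alpha,X)}$ is bounded by hypothesis, so with $w(t) = \|x(t)-y(t)\|_\alpha$ I obtain the singular integral inequality
$$w(t) \le a(\varepsilon)(t-s)^{1-2\alpha+\delta}\|u\|_\delta + b\int_s^t (t-\xi)^{-\alpha}w(\xi)\,d\xi,$$
in which $a(\varepsilon)$ is proportional to $\varepsilon$ and $b = C_\alpha\sup_\xi\|A_2(\xi)\|_{L(X^\alpha,X)}$.

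Finally I would resolve this inequality by a weighted form of the singular Gronwall argument underlying Lemma \ref{lem-gr}. Setting $\mu = 1 - 2\alpha + \delta$ and measuring $w$ in the weighted norm $\|w\|_* = \sup_{s < t \le s+Q}(t-s)^{-\mu}w(t)$, one checks that the integral operator $w \mapsto b\int_s^t(t-\xi)^{-\alpha}w(\xi)\,d\xi$ sends $(\xi-s)^\mu$ to $b\,B(1-\alpha,\mu+1)(t-s)^{\mu+1-\alpha}$, i.e. it raises the weight exponent by $1-\alpha$ and is therefore contractive on $[s,s+Q]$ after finitely many iterations, the iterated kernels summing to a convergent Mittag-Leffler type series. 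Inverting $I$ minus this operator then yields $w(t) \le R_3(\varepsilon)(t-s)^{1-2\alpha+\delta}\|u\|_\delta$ with $R_3(\varepsilon)$ proportional to $a(\varepsilon)$, hence $R_3(\varepsilon) \to 0$ as $\varepsilon \to 0$, which is (\ref{evol44}). To launch the iteration legitimately I would first record that $w$ is finite and integrable against the singular kernel: both $V$ and $V_1$ satisfy (\ref{evop1}) (the latter because $A_2$ is a bounded H\"older perturbation of the same sectorial $A$), so $w(\xi) \le C(\xi-s)^{\delta-\alpha}\|u\|_\delta$, which is integrable since $\delta - \alpha > -1$.

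The main obstacle I anticipate is exactly this last step: Lemma \ref{lem-gr} as stated only reproduces the powers $0$ and $-\alpha$ in its conclusion and so cannot be quoted verbatim to recover the sharp exponent $1-2\alpha+\delta$. The refinement requires re-running the singular Gronwall iteration with genuine power-law forcing, carefully checking the Beta-integral convergence conditions and the uniform convergence of the generated series on the finite interval $t - s \le Q$; everything else is bookkeeping of constants, all of which carry a factor of $\varepsilon$ and hence vanish in the limit.
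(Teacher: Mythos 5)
Your proposal follows the same route as the paper's proof: write both evolution operators in mild form, cancel the common free term $e^{-A(t-s)}u$, split the integrand into the $\varepsilon$-controlled source $(A_1-A_2)x$ and the feedback $A_2(x-y)$, bound the source via $\|A^\alpha e^{-A\eta}\|\le C_\alpha\eta^{-\alpha}$ together with $\|V(\xi,s)u\|_\alpha\le L_Q(\xi-s)^{\delta-\alpha}\|u\|_\delta$, and close with a singular Gronwall argument. Where you differ is the final step, and your version is in fact the more careful one. The paper absorbs the Beta integral $\int_s^t(t-\xi)^{-\alpha}(\xi-s)^{\delta-\alpha}d\xi$ into a constant $R_4$, obtaining a forcing term $\varepsilon R_4\|u_0\|_\delta$ with no $t$-dependence, and then cites Lemma~\ref{lem-gr}; but that lemma, applied to a constant forcing, returns only a constant bound and cannot reinstate the factor $(t-t_0)^{1-2\alpha+\delta}$ appearing in (\ref{evol44}). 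Your decision to retain the power-law forcing $(t-s)^{1-2\alpha+\delta}$ explicitly and to re-run the weighted singular Gronwall iteration (checking that the kernel raises the weight exponent by $1-\alpha$ at each step, so the iterated series converges on $t-s\le Q$) is exactly what is needed to legitimately produce the stated exponent, and your anticipated ``main obstacle'' is precisely the point the paper's write-up glosses over. The argument is correct; just make sure the regime $\delta\le\alpha$ you invoke for $(\delta-\alpha)_-=\delta-\alpha$ is consistent with how the lemma is used later (for $\delta>\alpha$ the estimate only improves, since $\|V(\xi,s)u\|_\alpha\le L_Q\|u\|_\delta$ there and the source integral is then $O((t-s)^{1-\alpha})$).
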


\begin{proof}
Denote by $u(t)$ and  $v(t)$ solutions of (\ref{lin1}) and (\ref{lin11}) with initial value $u(t_0) = v(t_0) = u_0.$
They satisfy inequalities
\begin{eqnarray}
& & \|u(t) - v(t)\|_\alpha \le \int_{t_0}^t \|A^\alpha e^{-A(t-s)}(A_1(s) - A_2(s))u(s)\| ds  \nonumber\\
& & + \int_{t_0}^t \|A^\alpha e^{-A(t-s)}A_2(s)(u(s) - v(s))\| ds  \nonumber\\
& & \le C_\alpha L_Q \varepsilon\|u_0\|_\delta \int_{t_0}^t \frac{ds}{(t-s)^\alpha (s-t_0)^{\alpha-\delta}} +
C_\alpha \|A_1\|_L \int_{t_0}^t  \frac{\|u(s) - v(s)\|_\alpha ds}{(t-s)^{\alpha}} \nonumber\\
& &  \le \varepsilon\|u_0\|_\delta R_4 + C_\alpha \|A_1\|_L \int_{t_0}^t  \frac{\|u(s) - v(s)\|_\alpha ds}{(t-s)^{\alpha}}.
\label{1ev}
\end{eqnarray}

Applying Lemma \ref{lem-gr} to (\ref{1ev}), we obtain (\ref{evol44}).

\end{proof}

We define the evolution operator for equation (\ref{lin1}), (\ref{lin2}) as
\begin{eqnarray*}
U(t,s) = V(t, s) \ {\rm if} \ \tau_k < s \le t \le \tau_{k+1}
\end{eqnarray*}
and
\begin{eqnarray} \label{evol1}
U(t,s) = V(t, \tau_k)(I + B_k)V(\tau_k, \tau_{k-1}) ... (I + B_m)V(\tau_m,s),
\end{eqnarray}
if $ \tau_{m-1} < s \le \tau_m < \tau_{m+1} < ... < \tau_k < t \le \tau_{k+1}.$

It is easy to verify that for fixed $t > s$ the operator $U(t,s)$ is bounded in the space $X^\alpha.$

\begin{definition}\label{defn:dich} 
We say that the equation  (\ref{lin1})--(\ref{lin2}) has an exponential dichotomy on ${R}$
 with exponent $\beta > 0$ and bound $M \ge 1$ (with respect to the space $X^\alpha$) if there exist projections $P(t), t\in {R},$
such that

(i) $U(t,s)P(s) = P(t)U(t,s), \ t \ge s$;
\vspace{1mm}

(ii) $U(t,s)|_{Im(P(s))}$ for  $t \ge s$ is an isomorphism
on $Im(P(s))$, and then
$U(s,t)$ is defined as an inverse map from $Im(P(t))$
to $Im(P(s))$;
\vspace{1mm}

(iii) $\|U(t,s)(1 - P(s))u\|_\alpha \le M e^{-\beta(t-s)}\|u\|_\alpha, \ t \ge s, \ u \in X^\alpha$;
\vspace{1mm}

(iv) $\|U(t,s)P(s)\|_\alpha \le M e^{\beta(t-s)}\|u\|_\alpha, \ t \le s, \ u \in X^\alpha$.
\end{definition}

\vspace{1mm}

 If Eq. (\ref{lin1})--(\ref{lin2}) has an exponential dichotomy on ${R},$
then the nonhomogeneous equation
\begin{eqnarray} \label{inlin1}
& & \frac{du}{dt} + (A + A_1(t)) u = f(t), \quad t \not= \tau_j, \\
& & \Delta u|_{t=\tau_j} = u(\tau_j + 0) - u(\tau_j) = B_j u(\tau_j) + g_j, \quad j \in  Z, \label{inlin2}
\end{eqnarray}
has a unique solution bounded on ${R}$
\begin{equation} \label{inlin3}
u_0(t) = \int_{-\infty}^{\infty}G(t,s)f(s)(x)ds + \sum_{j \in {Z}}G(t, \tau_j + 0)g_j,
\end{equation}
where
\begin{eqnarray*}
G(t,s) =
 \left\{\begin{array}{l}
            U(t,s)(I - P(s)), \ \ t \ge s,\\
            -U(t,s)P(s), \ \ t < s,
          \end{array} \right.
\end{eqnarray*}
is the Green function such that
\begin{eqnarray} \label{inlin5}
\|G(t,s)u\|_\alpha \le M e^{-\beta|t-s|}\|u\|_\alpha, \ t,s \in {R}.
\end{eqnarray}

Analogous to \cite{H}, p.250, it can be proven that a function $u(t)$ is a bounded solution of (\ref{inlin1}), (\ref{inlin2})
on the semiaxis $[t_0, +\infty)$ if and only if \ $u(t) =$
\begin{eqnarray*} 
= U(t,t_0)(I-P(t_0))u(t_0) + \int_{t_0}^{+\infty} G(t,s)f(s)ds + \sum_{t_0 \le \tau_j} G(t,\tau_j+0)g_j, \ t \ge t_0.
\end{eqnarray*}

A function $u(t)$ is bounded solution
on the semiaxis $(-\infty, t_0]$ if and only if
\begin{eqnarray*} 
 u(t) = U(t,t_0)P(t_0)u(t_0) + \int^{t_0}_{-\infty} G(t,s)f(s)ds + \sum_{t_0 > \tau_j} G(t,\tau_j+0)g_j, \ t \le t_0.
\end{eqnarray*}

Now we estimate $\| G(t,s)u\|_\alpha$ for $u \in X.$ Let $t > s$ and $\tau_{m-1} < s \le \tau_{m}, \ \tau_{k} < t \le \tau_{k+1}.$
Then
\begin{eqnarray} \label{gr1}
& & \|G(t,s)u\|_\alpha = \|U(t,s)(I - P(s))u\|_\alpha  \nonumber\\
& & \le \|U(t,\tau_m)(I - P(\tau_m))\|_\alpha \| U(\tau_m,s) u\|_\alpha  \nonumber\\
& & \le M e^{-\beta(t - \tau_m)} L_\Theta (\tau_m - s)^{-\alpha}\|u\| \le \tilde M e^{-\beta(t-s)}|\tau_m - s|^{-\alpha}\|u\|
\end{eqnarray}
and
\begin{eqnarray}
& & \| G(s,t)u\|_\alpha = \| U(s,t)P(t)u\|_\alpha  \nonumber\\
& & \le \|U(s,t+1)P(t+1)\|_\alpha \| A^\alpha U(t+1,t) u\| \le  \tilde M e^{-\beta(t-s)}\|u\|. \label{gr2}
\end{eqnarray}

If $t_1$ and $t_2$ belong to the same interval of continuity, then
\begin{eqnarray} \label{gr3}
\|P(t_1)u - P(t_2)u\|_\gamma \le \tilde M_1 \|t_1 - t_2|^\nu \|u\|_{\gamma + \nu}
\end{eqnarray}
since as in \cite{H}, p.247,
\begin{eqnarray*}
& & \|P(t + h)u - P(t)u\|_\gamma \le  \|P(t)u - V(t+h,t)P(t)u\|_\gamma  \\
& & + \|V(t+h,t)P(t)u - P(t+h)u\|_\gamma  \\
& & \le \|(I - V(t+h,t))P(t)u\|_\gamma + \|P(t+h)(V(t+h,t)u - u)\|_\gamma.
\end{eqnarray*}

\begin{lemma} \label{lem4}
Let the impulsive equation (\ref{lin1}), (\ref{lin2}) is exponentially dichotomous
with positive constants $\beta$ and $M.$
Then there exists $\varepsilon > 0$ such that the perturbed equations
\begin{eqnarray} \label{lin21}
& & \frac{du}{dt} + (A + \tilde A(t))u = 0, \quad t \not= \tilde\tau_j, \\
& & \Delta u|_{t=\tilde \tau_j} = u(\tilde\tau_j + 0) - u(\tilde\tau_j) = \tilde B_j u(\tilde\tau_j), \quad j \in  Z, \label{lin22}
\end{eqnarray}
with $\sup_j |\tau_j - \tilde \tau_j| \le \varepsilon, \sup_j \|B_j -\tilde B_j\| \le \varepsilon, \
\sup_t \|A_1(t) - \tilde A(t)\|_{L((X^\alpha,X)} \le \varepsilon,$
are also exponentially dichotomous with some constants $\beta_1 \le \beta$ and $M_1 \ge M.$
\end{lemma}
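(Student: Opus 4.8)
The plan is to remove the mismatch between the impulse moments $\{\tau_j\}$ and $\{\tilde\tau_j\}$ by a change of the time variable, after which the two systems share the same impulse moments and differ only through their continuous parts and their jump operators; the roughness of the dichotomy can then be read off from the discretisation at the impulse points. First I would introduce a piecewise-linear time substitution $t=h(s)$ with $h(\tilde\tau_j)=\tau_j$, affine on each $[\tilde\tau_j,\tilde\tau_{j+1}]$. Because $\sup_j|\tau_j-\tilde\tau_j|\le\varepsilon$ while $\theta\le\tau_{j+1}-\tau_j\le\Theta$ by ${\bf (H3)}$, the slopes $h'=(\tau_{j+1}-\tau_j)/(\tilde\tau_{j+1}-\tilde\tau_j)$ satisfy $|h'-1|\le C\varepsilon$ uniformly in $j$. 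In the new time the perturbed equation (\ref{lin21})--(\ref{lin22}) becomes $dw/dt+(\gamma(t)A+A_3(t))w=0$, $t\ne\tau_j$, $\Delta w|_{t=\tau_j}=\tilde B_j w$, where $\gamma(t)=1/h'$ is \emph{constant on each interval} $[\tau_j,\tau_{j+1}]$ with $|\gamma-1|\le C\varepsilon$, and $\sup_t\|A_1(t)-A_3(t)\|_{L(X^\alpha,X)}\to0$ as $\varepsilon\to0$. On each interval $[\tau_j,\tau_{j+1}]$ (of length at most $\Theta$) the constant value of $\gamma$ lets me apply Lemma \ref{lem2} with $Q=\Theta$, giving
\begin{eqnarray*}
\sup_j\|V(\tau_{j+1},\tau_j)-\tilde V(\tau_{j+1},\tau_j)\|_\alpha\le R_1(\varepsilon),
\end{eqnarray*}
where $\tilde V$ is the evolution operator of the transformed continuous part and $R_1(\varepsilon)\to0$.

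Next I would pass to the discretisation at the impulse points. With the one-step maps $W_j=(I+B_j)V(\tau_j,\tau_{j-1})$ and $\tilde W_j=(I+\tilde B_j)\tilde V(\tau_j,\tau_{j-1})$ in $X^\alpha$, an exponential dichotomy of (\ref{lin1})--(\ref{lin2}) in the sense of Definition \ref{defn:dich} is equivalent to a discrete exponential dichotomy of the sequence $\{W_j\}$, with projections $P_j=P(\tau_j)$ and the same exponent $\beta$ (the within-interval factors being absorbed into a bound of the form $Me^{\beta\Theta}$). The previous step together with ${\bf (H4)}$ gives $\sup_j\|W_j-\tilde W_j\|_\alpha\le C(R_1(\varepsilon)+\varepsilon)=:\eta(\varepsilon)\to0$. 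For a sequence of bounded operators the roughness of a discrete dichotomy is classical: a fixed-point (Green-operator) argument produces projections $\tilde P_j$ close to $P_j$ for which $\{\tilde W_j\}$ has a discrete dichotomy with $M_1\ge M$, $\beta_1\le\beta$ as soon as $\eta(\varepsilon)$ is small enough. This is where the approximation of the impulsive system by a difference system (\cite{H}) enters.

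Finally I would lift the discrete dichotomy back to the continuous scale and undo the time change. For $t\in(\tau_{j-1},\tau_j]$ I set $\tilde P(t)$ by propagating $\tilde P_{j-1}$ with $\tilde V$, and check (i)--(iv) of Definition \ref{defn:dich}: inside an interval the bound (\ref{evop1}) combined with the one-step estimates yields (iii)--(iv) exactly as in the derivation of (\ref{gr1})--(\ref{gr2}), while (i)--(ii) follow from the corresponding discrete relations. Since $h$ is a bi-Lipschitz homeomorphism of ${R}$ with $|h'-1|\le C\varepsilon$, transporting $\tilde P(t)$ back through $h^{-1}$ produces an exponential dichotomy for the original perturbed equation (\ref{lin21})--(\ref{lin22}) whose constants degrade only by $O(\varepsilon)$, giving the asserted $\beta_1\le\beta$, $M_1\ge M$.

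The main obstacle is the first step. The change of time is precisely what converts the shifted impulse moments into an ordinary perturbation of the continuous part, and the delicate point is to certify that the resulting piecewise-constant coefficient $\gamma(t)A+A_3(t)$ stays within the scope of Lemma \ref{lem2} uniformly in $j$---that is, that the bounded length $\Theta$ of the impulse intervals together with $|\gamma-1|\le C\varepsilon$ reduces everything to a single uniform small-perturbation estimate. Once the problem is reduced to a uniformly small perturbation of the maps $W_j$, the remaining work---discrete roughness and the lift-back---is routine.
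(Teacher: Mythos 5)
Your proposal is correct and follows essentially the same route as the paper: a piecewise-linear time change aligning the two sets of impulse moments (the paper transforms the unperturbed equation to the $\tilde\tau_j$ grid rather than the perturbed one to the $\tau_j$ grid, which is immaterial), reduction of the continuous-part mismatch to the perturbation estimate of Lemma \ref{lem2}, passage to a difference system whose discrete dichotomy is robust by Henry's results, and a lift back to the continuous scale. The only cosmetic deviation is your choice of the impulse points themselves as the discretisation grid where the paper uses an evenly spaced grid $s'+dn$; since $\theta\le\tau_{j+1}-\tau_j\le\Theta$ this changes nothing.
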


{\bf Proof.} In Eq. (\ref{lin1}), (\ref{lin2}), we introduce the change of time
$t = \vartheta(t')$ such that $\tau_j = \vartheta(\tilde\tau_j), j \in {Z},$
and the function $\vartheta$ is continuously differentiable and monotonic 
on each interval $(\tilde\tau_j, \tilde\tau_{j+1}).$

The function $\vartheta$ can be chosen in piecewise linear form
\begin{eqnarray} \label{lin23}
t = a_j t' + b_j, \ a_j = \frac{\tau_{j+1} - \tau_{j}}{\tilde\tau_{j+1} - \tilde\tau_{j}}, \
b_j = \frac{\tau_{j}\tilde\tau_{j+1} - \tau_{j+1}\tilde\tau_{j}}{\tilde\tau_{j+1} - \tilde\tau_{j}} \ \ {\rm if} \ \ t' \in (\tilde\tau_{j}, \tilde\tau_{j+1}).
\end{eqnarray}

The function $\vartheta(t')$ satisfies the conditions
$$|\vartheta(t') - t'| \le \varepsilon,  \ |\frac{d\vartheta(t')}{dt'} - 1| \le 2\varepsilon/\theta.$$

The equation (\ref{lin1}), (\ref{lin2}) in the new coordinates $v(t') = u(\vartheta(t'))$ has the form
\begin{eqnarray} \label{lin221}
& & \frac{dv}{dt'} + \frac{d\vartheta(t')}{d t'}\left(A + A_1(\vartheta(t')\right)v = 0, \quad t \not= \tilde\tau_j, \\
& & \Delta v|_{t'= \tilde\tau_j} = v(\tilde\tau_j + 0) - v(\tilde\tau_j) =  B_j v(\tilde\tau_j), \quad j \in  Z. \label{lin222}
\end{eqnarray}
Eq. (\ref{lin221}), (\ref{lin222}) has the evolution operator $U_1(t',s') = U(\vartheta(t'), \vartheta(s')).$
If Eq. (\ref{lin1}), (\ref{lin2}) has an exponential dichotomy
with projector $P(t)$ at point $t$, then Eq. (\ref{lin221}), (\ref{lin222}) has
an exponential dichotomy with projector $P_1(t') = P(\vartheta(t'))$ at point $t'$. Really,
\begin{eqnarray*}
& & \|U_1(t',s')(1 - P_1(s'))\|_\alpha =  \|U(\vartheta(t'),\vartheta(s'))(1 - P(\vartheta(s'))\|_\alpha  \\
& & \le M e^{-\beta(\vartheta(t')- \vartheta(s'))} \le  M e^{2\varepsilon} e^{-\beta(t'-s')}, \ t \ge s.
\end{eqnarray*}
The inequality for an unstable manifold is proved analogously.

The linear equations (\ref{lin221}), (\ref{lin222}) and (\ref{lin21}), (\ref{lin22}) have the same points of impulsive actions
$\tilde\tau_j, j \in {Z},$ and
\begin{eqnarray*}
& & \|\frac{d\vartheta(t')}{d t'}A_1(\vartheta(t')) - \tilde A(t')\| \le \|\frac{d\vartheta(t')}{d t'}A_1(\vartheta(t')) - A_1(\vartheta(t'))\|  \\
& & + \|A_1(\vartheta(t')) - A_1(t')\| +  \|A_1(t') - \tilde A(t')\| \le K_2(\varepsilon),
\end{eqnarray*}
where $K_2(\varepsilon) \to 0$ as $\varepsilon \to 0.$

Let $\tilde U(t',s')$ be the evolution operator for Eq. (\ref{lin21}), (\ref{lin22}).
To show that for sufficiently small $\delta_0 > 0$ Eq. (\ref{lin21}), (\ref{lin22}) is exponentially dichotomous,
we use the following variant of Theorem 7.6.10, \cite{H}:

Assume that the evolution operator $U_1(t',s')$ has an exponential dichotomy on
 ${R}$ and satisfies
 \begin{eqnarray}\label{evol_bound}
\sup\limits_{0\le t'-s'\le d}\|U_1(t',s')\|_\alpha < \infty
\end{eqnarray}
for some positive $d.$
Then there exists $\eta > 0$ such that
$$
\|\tilde U(t',s') - U_1(t',s')\|_\alpha < \eta, \ \ {\rm whenever} \ \ t-s \le d;
$$
the evolution operator $\tilde U(t',s')$ also has an exponential dichotomy on
 ${R}$ with some constants $\beta_1 \le \beta, M_1 \ge M.$

To prove this statement, we set for $n \in {Z}$
 $$ t_n = s' + dn, \ T_n = U_1(s' + d(n+1), s' + dn+0), \
\tilde T_n = \tilde U(s' + d(n+1), s' + dn+0). $$
If the evolution operator $U_1(t,s)$ has an exponential dichotomy,
then  $\left\{T_n\right\}$ has a discrete dichotomy in the sense of \cite[Definition 7.6.4]{H}.

According to Henry \cite{H}, Theorem 7.6.7, there exists $\eta> 0$ such that $\{\tilde T_n\}$ with
$\sup_n\|T_n - \tilde T_n\|_\alpha  \le \eta$
 has a discrete dichotomy.

Now we are in the conditions of \cite{H},Exercise 10, p. 229--230 (see also a more general statement \cite{ChL}, Theorem 4.1),
what finishes the proof.

Let us estimate the difference $\|\tilde T_k - T_k\|_\alpha.$ There exists a positive integer $N$ such that each interval of length $d$
contains no more then $N$ elements of sequence $\{\tau_j\}.$
Let the interval $(\xi_{n}, \xi_{n+1}]$ contains points of impulses
$\tilde\tau_m,..., \tilde\tau_k$ where $k-m \le N.$
Denote by $V_1(t,s)$ and $\tilde V(t,s)$ the evolution operators of equations without impulses (\ref{lin221}) and (\ref{lin21}),
respectively. Then
\begin{eqnarray}
& & \| T_n- \tilde T_n\|_\alpha  = \|U_1(\xi_{n+1},\xi_n)- \tilde U(\xi_{n+1},\xi_n)\|_\alpha   \nonumber \\
& & \le \|(V_1(\xi_{n+1}, \tilde\tau_k) - \tilde V(\xi_{n+1}, \tilde\tau_k))(I + B_k)V_1(\tilde\tau_k, \tilde\tau_{k-1}) ...
(I + B_m)V_1(\tilde\tau_m,\xi_n)\|_\alpha  \nonumber\\
& & +  \|\tilde V(\xi_{n+1}, \tilde\tau_k)(B_k - \tilde B_k)V_1(\tilde\tau_k, \tilde\tau_{k-1}) ...
(I + B_m)V_1(\tilde\tau_m, \xi_n)\|_\alpha + \ ... \ \nonumber\\
& & + \|\tilde V(\xi_{n+1}, \tilde\tau_k)(I + \tilde B_k)\tilde V(\tilde\tau_k, \tilde\tau_{k-1}) ...
(I + \tilde B_m)(V_1(\tilde\tau_m,\xi_n) - \tilde V(\tilde\tau_m,\xi_n))\|_\alpha. \label{dd0}
\end{eqnarray}
Using (\ref{evol4}), we get that
\begin{eqnarray*}
\sup_n \| T_n
- \tilde T_n\|_\alpha \le K_3(\varepsilon)
\end{eqnarray*}
with $K_3(\varepsilon) \to 0$ as $\varepsilon \to 0.$

The exponentially dichotomous equation (\ref{lin21}), (\ref{lin22}) has Green's function
\begin{eqnarray*}
\tilde G(t,s) =
 \left\{\begin{array}{l}
            \tilde U(t,s)(I - \tilde P(s)), \ t \ge s,\\
            -\tilde U(t,s)\tilde P(s), \ t < s,
          \end{array} \right.
\end{eqnarray*}
 such that
\begin{eqnarray*}
\|\tilde G(t,s)u\|_\alpha \le M_1 e^{-\beta_1|t-s|}\|u\|_\alpha, \ t,s \in {R}, \ u \in X^\alpha.
\end{eqnarray*}

The sequence of bounded operators $T_n: X^\alpha \to X^\alpha$ defines the difference equation
\begin{eqnarray} \label{dd1}
u_{n+1} = T_n u_n, \ n \in {Z},
\end{eqnarray}
with evolution operator $T_{n,m} = T_{n-1}...T_m, \ n \ge m, \ T_{m,m} = I.$
It is exponentially dichotomous with Green's function
\begin{eqnarray*}
G_{n,m} =  \left\{ \begin{array}{l}
            T_{n,m}(I - P_m), \ n \ge m, \\
            - T_{n,m}P_m, \ n < m,
          \end{array} \right.
\end{eqnarray*}
where $P_m = P(\xi_m).$
The second difference equation
\begin{eqnarray} \label{dd2}
u_{n+1} = \tilde T_n u_n, \ n \in {Z},
\end{eqnarray}
has evolution operator $\tilde T_{n,m} = \tilde T_{n-1}...\tilde T_m, \ n \ge m, \ \tilde T_{m,m} = I.$

By sufficiently small $\sup_n \|T_n - \tilde T_n\|_\alpha,$ Eq. (\ref{dd2}) is exponentially dichotomous
with Green's function
\begin{eqnarray*}
\tilde G_{n,m} =  \left\{ \begin{array}{l}
            \tilde T_{n,m}(I - \tilde P_m), \ n \ge m, \\
            - \tilde T_{n,m}\tilde P_m, \ n < m,
          \end{array} \right.
\end{eqnarray*}

According to Henry \cite{H}, p. 233, the difference between two Green's functions satisfies equality
\begin{eqnarray} \label{dd3}
\tilde G_{n,m} - G_{n,m} =  \sum_{k \in {Z}}G_{n,k+1}(\tilde T_k - T_k)\tilde G_{k,m}
\end{eqnarray}
and estimation
\begin{eqnarray} \label{dd4}
\|\tilde G_{n,m} - G_{n,m}\|_{\alpha} =
 M_2 e^{-\beta_2 d|n-m|}\sup_k\|\tilde T_k - T_k \|_\alpha, \ n,m \in {Z},
\end{eqnarray}
with some constants $\beta_2 \le \beta_1, M_2 \ge M_1.$

Now we can consider the difference of two Green's functions $\tilde G(t,s) - G_1(t,s).$
Let $t = s+nd+ t_1, t_1 \in [0,d).$ Then
\begin{eqnarray*}
& & \|\tilde G(t,s) - G_1(t,s)\|_\alpha \\
& & = \| \tilde U(s+nd+ t_1, s+nd) \tilde G(s+nd, s) - U(s+nd+ t_1, s+nd) G(s+nd, s)\|_\alpha  \\
& & \le \|(  \tilde U(s+nd+ t_1, s+nd) - U(s+nd+ t_1, s+nd)) \tilde G(s+nd, s)\|_\alpha  \\
& & + \| U(s+nd+ t_1, s+nd)(\tilde G(s+nd, s) - G(s+nd, s))\|_\alpha.
\end{eqnarray*}
Using (\ref{dd4}) and an estimation of the difference $\tilde U - U_1$ at a bounded interval as is done in (\ref{dd0}),
we get
\begin{eqnarray} \label{sm8}
\|\tilde G(t,\tau) - G_1(t,\tau)\|_\alpha \le \tilde M_2(\varepsilon) e^{-\beta_2|t-\tau|}, \quad t, \tau \in {R},
\end{eqnarray}
with $\tilde M_2(\varepsilon) \to 0$ as $\varepsilon \to 0.$

By the definition of Green's function, we have
\begin{eqnarray} \label{sm9}
\|\tilde P(\tau) - P_1(\tau)\|_\alpha \le \tilde M_2(\varepsilon) \ \ {\rm for \ all} \ \ \tau \in {R}.
\end{eqnarray}

\begin{corollary}
Let the conditions of Lemma \ref{lem4} be satisfied. Then for $t \in {R}, |t - \tau_j| \ge \varepsilon, j \in {Z},$ we have
\begin{eqnarray} \label{sm10}
\|(P(t) - \tilde P(t))u\|_\alpha \le \tilde M_3(\varepsilon)\|u\|_{\alpha + \nu},
\end{eqnarray}
where $\nu > 0, \alpha + \nu < 1,$ and $\tilde M_3(\varepsilon) \to 0$ as $\varepsilon \to 0.$
\end{corollary}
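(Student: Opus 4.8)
The plan is to route the comparison through the intermediate projector $P_1(t) = P(\vartheta(t))$ of the time-changed equation (\ref{lin221}), (\ref{lin222}), since the two resulting comparisons are each governed by estimates already established. I would start from the decomposition
\begin{eqnarray*}
P(t) - \tilde P(t) = \bigl(P(t) - P(\vartheta(t))\bigr) + \bigl(P_1(t) - \tilde P(t)\bigr)
\end{eqnarray*}
and treat the two brackets separately, as they capture two conceptually different errors: the first is the distortion caused by the change of time $\vartheta$, the second is the genuine perturbation of the coefficients.

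For the second bracket I would invoke (\ref{sm9}), which already yields $\|P_1(t) - \tilde P(t)\|_\alpha \le \tilde M_2(\varepsilon)$ uniformly in $t$, hence $\|(P_1(t) - \tilde P(t))u\|_\alpha \le \tilde M_2(\varepsilon)\|u\|_\alpha$. For the first bracket I would use the H\"older continuity (\ref{gr3}) of $P$ on intervals of continuity with $\gamma = \alpha$, together with the bound $|\vartheta(t) - t| \le \varepsilon$ from the proof of Lemma \ref{lem4}, to get $\|(P(t) - P(\vartheta(t)))u\|_\alpha \le \tilde M_1 |\vartheta(t) - t|^\nu \|u\|_{\alpha+\nu} \le \tilde M_1 \varepsilon^\nu \|u\|_{\alpha+\nu}$. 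Since $A$ is sectorial with spectrum bounded away from $0$, the embedding $X^{\alpha+\nu} \hookrightarrow X^\alpha$ is continuous, so $\|u\|_\alpha \le C\|u\|_{\alpha+\nu}$; combining the two bounds gives
\begin{eqnarray*}
\|(P(t) - \tilde P(t))u\|_\alpha \le \bigl(\tilde M_1 \varepsilon^\nu + C\tilde M_2(\varepsilon)\bigr)\|u\|_{\alpha+\nu} =: \tilde M_3(\varepsilon)\|u\|_{\alpha+\nu},
\end{eqnarray*}
with $\tilde M_3(\varepsilon) \to 0$ as $\varepsilon \to 0$ because $\varepsilon^\nu \to 0$ and $\tilde M_2(\varepsilon) \to 0$. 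This is exactly the asserted estimate, and it also explains why the right-hand side must carry the stronger norm $\|u\|_{\alpha+\nu}$ rather than $\|u\|_\alpha$: it is forced by the H\"older estimate (\ref{gr3}).

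The step I expect to demand the most care is the application of (\ref{gr3}) to the first bracket, since that inequality is valid only when $t$ and $\vartheta(t)$ lie in a common interval of continuity of $P$, i.e. no original impulse point $\tau_j$ separates them. This is precisely the role of the hypothesis $|t - \tau_j| \ge \varepsilon$: because $|\vartheta(t) - t| \le \varepsilon$ and $\vartheta$ maps each interval $(\tilde\tau_j, \tilde\tau_{j+1})$ monotonically onto $(\tau_j, \tau_{j+1})$, keeping $t$ at distance at least $\varepsilon$ from the discontinuity set confines both $t$ and $\vartheta(t)$ to the same continuity interval. I would verify this incidence bookkeeping explicitly, relating the two impulse sequences $\{\tau_j\}$ and $\{\tilde\tau_j\}$ through $\sup_j|\tau_j - \tilde\tau_j| \le \varepsilon$, and if necessary enlarge the separation constant by a fixed multiple of $\varepsilon$; this is the only place where the geometry of the perturbed impulse times genuinely enters the argument.
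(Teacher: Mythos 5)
Your proposal is correct and follows essentially the same route as the paper: both insert the time-changed projector $P(\vartheta(t))$, bound the Hölder-type term via (\ref{gr3}) using $|\vartheta(t)-t|\le\varepsilon$ and the separation $|t-\tau_j|\ge\varepsilon$, and bound the remaining term via the uniform estimate (\ref{sm9}). The only cosmetic difference is that the paper splits into three terms (adding $\tilde P(\vartheta(t))-\tilde P(t)$, again controlled by Hölder continuity) where you use two, and your explicit discussion of why the separation hypothesis keeps $t$ and $\vartheta(t)$ in one continuity interval makes precise what the paper leaves implicit.
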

\begin{proof}
  Using (\ref{gr3}) and (\ref{sm9}), we get
\begin{eqnarray*}
& & \|(P(t) - \tilde P(t))u\|_\alpha \le \|(P(t) - P(\vartheta(t)))u\|_\alpha  \\
& & + \|(P(\vartheta(t)) - \tilde P(\vartheta(t)))u\|_\alpha + \|(\tilde P(\vartheta(t)) - \tilde P(t))u\|_\alpha \le
\tilde M_3(\varepsilon)\|u\|_{\alpha + \nu}.
\end{eqnarray*}
 \end{proof}


\section{Almost Periodic Solutions of Equations with Fixed Moments of Impulsive Action }

Consider the linear inhomogeneous equation
\begin{eqnarray} \label{in1}
& & \frac{du}{dt} + (A + A_1(t))u = f(t), \quad t \not= \tau_j, \\
& & \Delta u|_{t=\tau_j} = u(\tau_j+0) - u(\tau_j) = B_j u(\tau_j) + g_j, \quad j \in  Z. \label{in2}
\end{eqnarray}

We assume that

${\bf (H7)}$ the function $f(t): {R} \to X$ is W-almost periodic and locally H\"{o}lder continuous with points of discontinuity
at moments $t = \tau_j, j \in {Z},$ at which it is continuous from the left;
\vspace{1mm}

${\bf (H8)}$ the sequence $\{g_j\}$ of $g_j \in X^{\alpha_1}, \alpha_1  >  \alpha > 0,$ is almost periodic.
\vspace{2mm}

\begin{theorem} \label{thm1}
Assume that  Eq. (\ref{in1}), (\ref{in2}) satisfy conditions ${\bf (H1)}$ -- ${\bf (H3)}$, ${\bf (H7)},$ and ${\bf (H8)}$  and
that the corresponding homogeneous equation is exponentially dichotomous.

Then the equation has a unique W-almost periodic solution $u_0(t) \in \mathcal{PC}({R}, X^\alpha).$

\end{theorem}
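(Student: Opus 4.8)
The plan is to build the candidate solution explicitly from the Green's function representation that is already available for an exponentially dichotomous equation, and then to verify that it fulfils the three defining conditions of $W$-almost periodicity in Definition \ref{def3}.

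First I would settle existence and uniqueness of a bounded solution. Because the homogeneous equation is exponentially dichotomous, the only candidate is
\begin{eqnarray*}
u_0(t) = \int_{-\infty}^{\infty}G(t,s)f(s)\,ds + \sum_{j \in {Z}}G(t, \tau_j + 0)g_j,
\end{eqnarray*}
and the first task is to check that this formula defines an element of $\mathcal{PC}({R},X^\alpha)$. Convergence of the integral follows from the refined bounds (\ref{gr1})--(\ref{gr2}): near an impulse point the factor $|\tau_m - s|^{-\alpha}$ is integrable since $\alpha < 1$, while away from impulses the exponential $e^{-\beta|t-s|}$ gives decay; since the number of impulses in any window is linearly bounded by ${i}(s,t) \le N(t-s)/q + N$, partitioning ${R}$ into unit intervals produces an absolutely convergent majorant. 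Convergence of the series uses (\ref{inlin5}), the hypothesis $g_j \in X^{\alpha_1}$ with $\alpha_1 > \alpha$, and the same counting bound, so that $\sum_j M e^{-\beta|t-\tau_j|}\|g_j\|_\alpha$ converges. Uniqueness is immediate: the difference of two bounded solutions is a bounded solution of the homogeneous equation, hence zero by the dichotomy.

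Next I would verify conditions (i) and (ii) of Definition \ref{def3}. Condition (i) holds by hypothesis, since under ${\bf (H3)}$ the discontinuity sequence $\{\tau_j\}$ has uniformly almost periodic sequences of differences. For condition (ii), uniform continuity of $u_0$ on each interval of continuity follows from the regularity estimates (\ref{evop1})--(\ref{evop2}) for $V(t,s)$, the local H\"{o}lder continuity of $A_1$ and $f$, and the extra smoothness $g_j \in X^{\alpha_1}$; the gain $\alpha_1 - \alpha > 0$ is exactly what lets the impulsive terms be controlled in the $X^\alpha$-norm as $t$ moves within an interval.

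The heart of the proof is condition (iii), and here I would invoke Lemma \ref{lem1}. Given $\varepsilon > 0$, it supplies a relatively dense set of numbers $r$ with an associated integer $q$ that are common $\varepsilon$-almost periods for $\{\tau_j\}$, for $A_1$, for $\{B_j\}$, for $f$, and for $\{g_j\}$. The key remark is that the shifted Green's function $G_r(t,s):= G(t+r,s+r)$ is the Green's function of the impulsive equation with coefficient $A_1(\cdot + r)$, impulse times $\{\tau_k - r\}$ and operators $\{B_k\}$; reindexing $k = j+q$ and using $|\tau_{j+q} - r - \tau_j| < \varepsilon$, $\|B_{j+q} - B_j\| < \varepsilon$, and $\sup_t\|A_1(t + r) - A_1(t)\| < \varepsilon$, this shifted equation is an $\varepsilon$-perturbation of the original in the precise sense of Lemma \ref{lem4}. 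Consequently the robustness estimate (\ref{sm8}), together with (\ref{sm10}) for the projections, bounds the difference $G_r - G$ (at the cost of a slight loss of regularity absorbed by $g_j \in X^{\alpha_1}$). After the change of variable $s \mapsto s+r$ in the integral and the shift $j \mapsto j+q$ in the series I would write
\begin{eqnarray*}
& & u_0(t+r) - u_0(t) = \int_{-\infty}^{\infty}(G_r(t,s) - G(t,s))f(s+r)\,ds \\
& & + \int_{-\infty}^{\infty}G(t,s)(f(s+r) - f(s))\,ds + (\text{impulsive terms}),
\end{eqnarray*}
and estimate each piece separately: the perturbation pieces by (\ref{sm8})--(\ref{sm10}), and the $(f(\cdot + r)-f)$ and $(g_{j+q}-g_j)$ pieces by the almost-period inequalities of Lemma \ref{lem1}. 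The main obstacle is that Lemma \ref{lem1} controls $f(t+r)-f(t)$ only for $t$ bounded away from impulse points, whereas the kernel carries the singularity $|\tau_m - s|^{-\alpha}$ and, when impulse times are shifted, $G_r$ and $G$ are singular at slightly different places. I would therefore split each integral into an $\varepsilon$-neighbourhood of the impulse points and its complement, bounding the near-impulse contribution through integrability of the singular kernel against the exponential weight and the linear counting bound on impulses, and the far contribution through Lemma \ref{lem1}. Carrying these estimates out uniformly for $t$ with $|t-\tau_k|\ge\varepsilon$ yields a bound of the form $R(\varepsilon)\|f\|_{PC} + R'(\varepsilon)\sup_j\|g_j\|_{\alpha_1}$ tending to $0$ as $\varepsilon \to 0$, which is precisely condition (iii) and completes the proof.
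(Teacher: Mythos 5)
Your proposal is correct and follows essentially the same route as the paper: the Green's function formula (\ref{inlin3}) with uniqueness from the dichotomy, the identification of the time-shifted equation as an $\varepsilon$-perturbation in the sense of Lemma \ref{lem4} so that the robustness estimates (\ref{sm8}), (\ref{sm10}) (together with (\ref{evol44})) control $G(t+h,s+h)-G(t,s)$, the splitting of the integrals into $\varepsilon$-neighbourhoods of the impulse points and their complements, and the use of the extra regularity $g_j\in X^{\alpha_1}$, $\alpha_1>\alpha$, to absorb the mismatch of the shifted impulse times. The paper carries out the perturbation bound by re-deriving the discrete Green's function comparison at the midpoints $\eta_k$ rather than quoting (\ref{sm8}) directly, but this is the same argument in substance.
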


\begin{proof}  We show that an almost periodic solution is given by the formula (\ref{inlin3}).
For $t \in (\tau_i, \tau_{i+1}],$ it satisfies
\begin{eqnarray}
& & \|u_0(t)\|_\alpha \le \int_{-\infty}^{t}\|A^\alpha U(t,s)(I - P(s))f(s)\| ds  \nonumber\\
& & + \int^{\infty}_{t}\|A^\alpha U(t,s)P(s)f(s)\| ds + \sum_{j \in {Z}} \| G(t, \tau_j + 0)g_j\|_\alpha  \nonumber\\
& & \le   \sum_{j \in{Z}} \| G(t, \tau_j + 0)g_j\|_\alpha  + \int_{\tau_i}^t \|A^\alpha V(t,s)(I - P(s))f(s)\| ds  \nonumber\\
& & + \sum_{k=0}^\infty \int_{\tau_{i-k-1}}^{\tau_{i-k}} \| U(t,\tau_{i-k})(I - P(\tau_{i-k}))\|_\alpha \| A^\alpha U(\tau_{i-k},s)f(s)\|ds  \nonumber\\
& &  + \sum_{k=1}^\infty \int_{\tau_{i+k}}^{\tau_{i+k+1}} \| U(t,\tau_{i+k+1})P(\tau_{i+k+1})\|_\alpha \| A^\alpha U(\tau_{i+k+1},s)f(s)\|ds
\nonumber\\
& & + \int^{\tau_{i+1}}_t \|A^\alpha V(t,s)P(s)f(s)\| ds \le  \frac{2M}{1 - e^{-\theta\beta}} \frac{C_\alpha \Theta^{1-\alpha}}{1-\alpha}
\|f\|_{PC}  \nonumber\\
& & + \frac{2M}{1 - e^{-\theta\beta}} \sup_j \|g_j\|_\alpha \le
 \tilde M_0 \max \{\|f(t)\|_{PC}, \|{g_j}\|_\alpha\} \label{in30}
\end{eqnarray}
with some constant $\tilde M_0 > 0.$

Take an $\varepsilon$-almost period $h$ for the right-hand side of the equation, which satisfies conditions of
 Lemma 1; that is, there exists a positive integer $q$ such that $\tau_{j+q} \in (s+h, t+h)$ if $\tau_j \in (s,t)$ and
$|\tau_{j} + h - \tau_{j+q}| < \varepsilon, \|B_{j+q} - B_j\| < \varepsilon.$

Let $t \in (\tau_i + \varepsilon, \tau_{i+1} - \varepsilon).$
 We define points $\eta_k = (\tau_k + \tau_{k-1})/2, k \in {Z}$.
 Then
\begin{eqnarray}
& & \|u_0(t + h) - u_0(t)\|_\alpha \le \sum_{j \in {Z}} \| G(t + h, \tau_{j+q}+0)g_{j+q} - G(t,\tau_j+0)g_j\|_\alpha  \nonumber\\
& & + \int_{-\infty}^{\infty}\| G(t + h,s + h)f(s + h) - G(t,s)f(s)\|_\alpha ds   \nonumber\\
& & \le \int_{-\infty}^{\infty}\|(G(t + h,s + h) - G(t,s))f(s+h)\|_\alpha ds  \nonumber\\
& & + \int_{-\infty}^{\infty}\|G(t,s))(f(s+h) - f(s))\|_\alpha ds + \sum_{j \in {Z}} \|G(t,\tau_j+0))(g_{j+q} - g_j)\|_\alpha  \nonumber\\
& & + \sum_{j \in {Z}} \| (G(t + h, \tau_{j+q}+0) - G(t,\tau_j+0))g_{j+q}\|_\alpha.
\label{in40}
\end{eqnarray}

Denote $U_2(t,s) = U(t+h,s+h).$
If $u(t) = U(t,s)u_0, u(s) = u_0,$ is a solution of the impulsive equation (\ref{lin1}), (\ref{lin2}),
then $u_2(t) = U(t+h, s+h)u_0, u_2(s) = u_0,$ is a solution of the equation
\begin{eqnarray} \label{lin1s}
& & \frac{du}{dt} + (A + A_1(t+h))u = 0, \quad t \not= \tau_{j+q}-h, \\
& & \Delta u|_{t+h=\tau_{j+q}} = u(\tau_{j+q}+0) - u(\tau_{j+q}) = B_{j+q} u(\tau_{j+q}), \quad j \in  Z. \label{lin2s}
\end{eqnarray}
We will use the notation $V_2(t,s) = V(t+h, s+h)$ for the evolution operator of the equation without impulses (\ref{lin1s}).
Denote also $\tilde \tau_n = \tau_{n+q} - h, \tilde B_n = B_{n+q}.$
Since Eq. (\ref{lin1}), (\ref{lin2}) is exponentially dichotomous, Eq. (\ref{lin1s}), (\ref{lin2s})
is exponentially dichotomous also with projector $P_2(s) = P(s + h).$

The first integral in (\ref{in40}) is the sum of two integrals:
\begin{eqnarray} \label{in50}
& & \int_{-\infty}^{\infty} \|(G(t + r,s + r) - G(t,s))f(s+r)\|_\alpha ds  \nonumber\\
& & = \int_{-\infty}^{t} \|(U_2(t,s)(I - P_2(s)) - U(t,s)(I - P(s)))f(s+r)\|_\alpha ds  \nonumber\\
& & + \int^{\infty}_{t} \|(U_2(t,s)P_2(s) - U(t,s)P(s))f(s+r)\|_\alpha ds.
\end{eqnarray}

We estimate the first integral in (\ref{in50}); the second integral is considered analogously.
\begin{eqnarray} \label{in600}
& & \int_{-\infty}^{t} \|(U_2(t,s)(I - P_2(s)) - U(t,s)(I - P(s)))f(s+r)\|_\alpha ds  \nonumber\\
& & \le \int_{\tau_i+\varepsilon}^t \|A^\alpha (V_2(t,s)(I - P_2(s)) - V(t,s)(I - P(s)))f(s+r)\| ds  \nonumber\\
& & + \int_{\tau_i - \varepsilon}^{\tau_i + \varepsilon} \|A^\alpha (U_2(t,s)(I - P_2(s)) - U(t,s)(I - P(s)))f(s+r)\| ds  \nonumber\\
& & + \int_{\eta_i}^{\tau_i - \varepsilon} \|A^\alpha(U_2(t,s)(I - P_2(s)) - U(t,s)(I - P(s)))f(s+r)\| ds  \nonumber\\
& & + \sum_{k=1}^{\infty} \int_{\eta_{i-k}}^{\eta_{i-k+1}} \|A^\alpha(U_2(t,s)(I - P_2(s)) - U(t,s)(I - P(s)))f(s+r)\| ds. \label{in6}
\end{eqnarray}

Let us consider all integrals in (\ref{in600}) separately.
By (\ref{sm10}) and (\ref{evol44}) we have
\begin{eqnarray*}
& & I_{11} = \int_{\tau_i+\varepsilon}^t \|A^\alpha (V_2(t,s)(I - P_2(s)) - V(t,s)(I - P(s)))f(s+r)\| ds  \nonumber\\
& & = \int_{\tau_i+\varepsilon}^t \|A^\alpha ((I - P_2(t))V_2(t,s) - (I - P(t))V(t,s))f(s+r)\| ds  \nonumber\\
& & \le \int_{\tau_i+\varepsilon}^t \|A^\alpha (P_2(t) - P(t))V_2(t,s)f(s+r)\| ds  \nonumber\\
& & + \int_{\tau_i+\varepsilon}^t \|A^\alpha (I - P(t))(V_2(t,s) - V(t,s))f(s+r)\| ds  \nonumber\\
& & \le \left( \int_{\tau_i+\varepsilon}^t \frac{\tilde M_3(\varepsilon) L_Q ds}{(t-s)^{\alpha}} +
 \int_{\tau_i+\varepsilon}^t \frac{ R_3(\varepsilon) ds}{(t-s)^{2\alpha-1}}\right)\|f\|_{PC} \le  \Gamma_1(\varepsilon) \|f\|_{PC}.
\end{eqnarray*}

\begin{eqnarray*}
& & I_{12} = \int_{\tau_i - \varepsilon}^{\tau_i + \varepsilon} \|A^\alpha U(t,s)(I - P(s))f(s+h)\| ds  \\
& & \le \int_{\tau_i}^{\tau_i + \varepsilon}  \|A^\alpha (I - P(t))V(t,s)f(s+h)\| ds  \\
 & & + \int_{\tau_i - \varepsilon}^{\tau_i} \|\|A^\alpha (I - P(t))V(t,\tau_i)(I + B_i)U(\tau_i,s)f(s+h)\| ds  \\
 & & \le \Bigl( \int_{\tau_i}^{\tau_i+\varepsilon} \frac{C_\alpha  ds}{(t - s)^\alpha}
M\|I+B_i\| \int_{\tau_i - \varepsilon}^{\tau_i} \frac{C_{\alpha}ds}{(s - \tau_i)^{\alpha}}\Bigl)\|f\|_{PC}  \\
& & \le \Gamma_2(\varepsilon) \|f\|_{PC}.
\end{eqnarray*}
Analogously,
\begin{eqnarray*}
& & I_{13} = \int_{\tau_i - \varepsilon}^{\tau_i + \varepsilon} \|A^\alpha U_2(t,s)(I - P_2(s))f(s+h)\| ds \le \Gamma_3(\varepsilon) \|f\|_{PC},
\end{eqnarray*}
where $\Gamma_j(\varepsilon) \to 0$ as $\varepsilon \to 0, \ j = 1,2,3.$

Using (\ref{evol44}) and (\ref{sm10}), we get
\begin{eqnarray*}
& & I_{14} = \int_{\eta_i}^{\tau_i - \varepsilon} \|A^\alpha(U_2(t,s)(I - P_2(s)) - U(t,s)(I - P(s)))f(s+r)\| ds  \\
& & = \int_{\eta_i}^{\tau_i - \varepsilon} \| \Bigl((I - P_2(t))V_2(t, \tilde\tau_i)(I + \tilde B_{i})V_1(\tilde\tau_{i},s)  \\
& & - (I - P(t))V(t,\tau_{i})(I + B_{i})V(\tau_{i},s)\Bigl)f(s+h)\|_\alpha ds  \\[2mm]
& & \le \int_{\eta_i}^{\tau_i - \varepsilon} \|(P_2(t) - P(t))V_2(t, \tilde\tau_i)(I + B_{i})V_2(\tilde\tau_i,s)f(s+h)\|_{\alpha} ds  \\
& & + \int_{\eta_i}^{\tau_i - \varepsilon} \|(I - P(t))(V_2(t, \tilde\tau_i) - V(t,\tau_{i}))(I + B_{i})V_2(\tilde\tau_i,s)f(s+h)\|_{\alpha} ds  \\
& & + \int_{\eta_i}^{\tau_i - \varepsilon} \|(I - P(t))V(t,\tau_{i})(\tilde B_{i} -B_i)V_2(\tilde\tau_i,s)f(s+h)\|_{\alpha} ds  \\
& & + \int_{\eta_i}^{\tau_i - \varepsilon} \|(I - P(t))V(t,\tau_{i})(I - B_i)(V_2(\tilde\tau_i,s) - V(\tau_{i},s))f(s+h)\|_{\alpha} ds  \\
& & \le  \Gamma_4(\varepsilon)\|f\|_{PC},
\end{eqnarray*}
where $\Gamma_4(\varepsilon) \to 0$ as $\varepsilon \to 0.$

The last sum in (\ref{in600}) is transformed as follows:
\begin{eqnarray*}
& & I_{15} = \sum_{k=1}^{\infty} \int_{\eta_{i-k}}^{\eta_{i-k+1}} \|A^\alpha(U_2(t ,s)(I - P_2(s)) - U(t,s)(I - P(s)))f(s+r)\| ds  \\
& & = \sum_{k=1}^{\infty} \int_{\eta_{i-k}}^{\eta_{i-k+1}} \|(U(t, \eta_i)(I - P(\eta_i))U(\eta_i, \eta_{i-k+1})U(\eta_{i-k+1},s)  \\
& & - U_2(t, \eta_i)(I - P_2(\eta_i))U_2(\eta_i, \eta_{i-k+1})U_2(\xi_{i-k+1},s))f(s+h)\|_{\alpha} ds  \\
& & \le \sum_{k=1}^{\infty} \int_{\eta_{i-k}}^{\eta_{i-k+1}}\Bigl\|\Bigl((U(t, \eta_i) -
U_2(t, \eta_i)) (I - P(\eta_i))U(\eta_i, \eta_{i-k+1})U(\eta_{i-k+1},s) \\
& & + U_2(t, \eta_i)\left( (I - P(\eta_i))U(\eta_i, \eta_{i-k+1}) -(I - P_2(\eta_i)) U_2(\eta_i, \eta_{i-k+1})\right)
U(\eta_{i-k+1},s)  \\
& & + U_2(t,  \eta_{i-k+1})(I - P_2( \eta_{i-k+1}))
\left(U(\eta_{i-k+1},s) - U_2(\eta_{i-k+1},s)\right)\Bigl)f(s+h)\Bigl\|_\alpha ds.
\end{eqnarray*}

As in the proof of Lemma \ref{lem4}, we construct in space $X^\alpha$ two sequences of bounded operators
$$S_n = U(\eta_{n+1}, \eta_n), \quad \tilde S_n = U_2(\eta_{n+1}, \eta_n), \quad n \in {Z},$$
and corresponding difference equations
\begin{eqnarray*}
u_{n+1} = S_n u_n, \quad v_{n+1} = \tilde S_n v_n, \quad n \in {Z}.
\end{eqnarray*}

Per our assumption, these difference equations are exponentially dichotomous
with corresponding evolution operators
$$S_{n,m} = S_{n-1}...S_m, \quad \tilde S_{n,m} = \tilde S_{n-1}...\tilde S_m, \quad n \ge m,$$
and Green's functions
\begin{eqnarray*}
G_{n,m} = \left\{ \begin{array}{l}
            S_{n,m}(I - P_m), \ n \ge m, \\
            - S_{n,m} P_m, \ n < m,
          \end{array} \right. \quad
\tilde G_{n,m} =  \left\{ \begin{array}{l}
            \tilde S_{n,m}(I - \tilde P_m), \ n \ge m, \\
            - \tilde S_{n,m}\tilde P_m, \ n < m,
          \end{array} \right.
\end{eqnarray*}
where $P_m = P(\eta_m), \tilde P_m = P_2(\eta_m).$

Analogous to (\ref{dd3}) and (\ref{dd4}), we obtain
\begin{eqnarray*} 
\tilde G_{n,m} - G_{n,m} =  \sum_{k \in {Z}}G_{n,k+1}(\tilde S_k - S_k)\tilde G_{k,m}
\end{eqnarray*}
and
\begin{eqnarray} \label{dd44}
\|\tilde G_{n,m} - G_{n,m}\|_{\alpha} =
 M_1 e^{-\beta_1 \theta |n-m|}\sup_k\|\tilde S_k - S_k \|_\alpha, \ n,m \in {Z}
\end{eqnarray}
with some constants $\beta_1 \le \beta, M_1 \ge M.$
\begin{eqnarray*}
& & \| S_n - \tilde S_n\|_\alpha  = \|U(\eta_{n+1},\eta_n)-  U_2(\eta_{n+1},\eta_n)\|_\alpha    \\
& & = \|V(\eta_{n+1}, \tau_n)(I + B_n) V(\tau_n, \eta_{n}) -
V_2(\eta_{n+1}, \tilde\tau_n)(I + \tilde B_n) V_2(\tilde\tau_n, \eta_{n})\|_\alpha   \\
& & \le \|(V(\eta_{n+1}, \tau_n) - V_2(\eta_{n+1}, \tilde \tau_n) )(I + B_n) V(\tau_n, \eta_{n}) \|_\alpha   \\
 & & + \|V_2(\eta_{n+1}, \tilde \tau_n) )(B_n - \tilde B_n) V(\tau_n, \eta_{n}) \|_\alpha   \\
 & & + \| V_2(\eta_{n+1}, \tilde \tau_n) )(I + \tilde B_n) (V(\tau_n, \eta_{n}) - V_2(\tilde\tau_n, \eta_{n})) \|_\alpha.
\end{eqnarray*}
Here we assume for definiteness that $\tilde\tau_{n} \ge \tau_{n}.$ We have
\begin{eqnarray*}
& & \|(V(\eta_{n+1}, \tau_n) - V_2(\eta_{n+1}, \tilde \tau_n))y\|_\alpha \le \| V(\eta_{n+1}, \tilde \tau_n)(V(\tilde \tau_n,\tau_n) - I)y\|_\alpha  \\
& & + \|( V(\eta_{n+1}, \tilde \tau_n) - V_2(\eta_{n+1}, \tilde \tau_n))y\|_\alpha  \\
& & \le \Gamma_5(\varepsilon)\|y\|_\alpha
\end{eqnarray*}
and
\begin{eqnarray*}
& & \|(V_2(\tilde \tau_n,\eta_{n}) - V(\tau_n,\eta_{n}))y \|_\alpha \le \| (V_2(\tilde \tau_n,\tau_{n}) - I)V_2(\tau_n,\eta_{n})y\|_\alpha \\
& & + \|V_2(\tau_n,\eta_{n}) - V(\tau_n,\eta_{n})y\|_\alpha \le  \Gamma_6(\varepsilon)\|y\|_\alpha
\end{eqnarray*}
where $\Gamma_5(\varepsilon) \to 0$ and $\Gamma_6(\varepsilon) \to 0$ as $\varepsilon \to 0.$

Now we get
\begin{eqnarray*}
& & \| S_n- \tilde S_n\|_\alpha \le \Gamma_5(\varepsilon)\|I + B_n\| \|U(\tau_n, \eta_{n})\|_\alpha  \\
& & + \varepsilon \| U_2(\eta_n, \tau_{n})\|_\alpha \|U(\tau_n, \eta_{n})\|_\alpha +
\Gamma_6(\varepsilon)\|U_2(\eta_{n+1}, \tilde \tau_n)\|_{\alpha}\|I + \tilde B_n\|  \le \Gamma_7(\varepsilon)
\end{eqnarray*}
and by (\ref{dd44})
\begin{eqnarray} \label{exp-2}
\| U(\eta_i, \eta_{i-k}) - U_2(\eta_i, \eta_{i-k})\|_\alpha \le M_1 e^{-\beta_1 \theta k} \Gamma_7(\varepsilon),
\end{eqnarray}
where $\Gamma_7(\varepsilon) \to 0$  as $\varepsilon \to 0.$

Continuing to evaluate $I_{15},$ we can obtain the inequalities
\begin{eqnarray*}
& & \|U_2(t,\eta_i)g\|_\alpha \le M_2 \|g\|_\alpha, \\
& & \|(U(t,\eta_i) - U_2(t,\eta_i))g\|_\alpha \le \Gamma_8(\varepsilon)\|g\|_\alpha, \\
& & \int_{\xi_{i-k}}^{\eta_{i-k+1}}\|(U(\eta_{i-k+1},s) - U_2(\eta_{i-k+1},s))f(s+h)\|_\alpha ds \le \Gamma_9(\varepsilon)\|f\|_{PC},
 \end{eqnarray*}
where $\Gamma_8(\varepsilon) \to 0$ and $\Gamma_9(\varepsilon) \to 0$ as $\varepsilon \to 0,$
$M_2$ is some positive constant. Note that as ealier, $t \in (\tau_i+ \varepsilon, \tau_{i+1}- \varepsilon).$

Taking into account the last inequalities, we conclude that series $I_{15}$ is convergent and
there exists $\Gamma_{10}(\varepsilon)$ such that
$I_{15} \le \Gamma_{10}(\varepsilon)\|f\|_{PC}$
and $\Gamma_{10}(\varepsilon) \to 0$  as $\varepsilon \to 0.$

Using estimations for $I_{11},..., I_{15}$, we get that there exists $\Gamma_{11}(\varepsilon)$ such that
\begin{eqnarray} \label{exp-3}
 \int_{-\infty}^{\infty} \|(G(t + r,s + r) - G(t,s))f(s+r)\|_\alpha ds \le \Gamma_{11}(\varepsilon)\|f\|_{PC}
\end{eqnarray}
and $\Gamma_{11}(\varepsilon) \to 0$  as $\varepsilon \to 0.$

By Lemma \ref{lem1}, $|\tau_{j+q} - \tau_j - h| < \varepsilon;$ therefore, $\tau_{j} + h +  \varepsilon > \tau_{j+q}$
(we assume that $h > 0$ for definiteness).
The difference $G(t, \tau_j+0) - G(t+h, \tau_{j+q}+0)$ is estimated as follows. Let $t - \tau_j \ge \varepsilon.$ Then
\begin{eqnarray}
& & \|( G(t, \tau_j+0) - G(t+h, \tau_{j+q}+0))g_{j+q} \|_\alpha  \nonumber \\
& & = \| (U(t, \tau_j+0)(I - P(\tau_j+0)) - U(t+h, \tau_{j+q}+0)(I - P(\tau_{j+q}+0)))g_{j+q} \|_\alpha  \nonumber \\
& & \le \|(U(t, \tau_j+0)(I - P(\tau_j+0)) - U(t, \tau_{j} + \varepsilon)(I - P(\tau_{j} + \varepsilon)))g_{j+q} \|_\alpha  \nonumber \\
& & + \| (U(t, \tau_{j} + \varepsilon)(I - P(\tau_{j} + \varepsilon)) - U(t + h, \tau_{j} +  \varepsilon + h)  \nonumber \\
& & \times (I - P(\tau_{j} + \varepsilon + h)))g_{j+q} \|_\alpha +
\| (U(t+h, \tau_{j+q}+0)(I - P(\tau_{j+q}+0))  \nonumber \\
& & - U(t + h, \tau_{j} + \varepsilon + h)(I - P(\tau_{j} + \varepsilon + h)))g_{j+q} \|_\alpha.   \label{ap2}
\end{eqnarray}
The first and third differences are small due to the continuity of function $U(t,s)$ at intervals between impulse points:
\begin{eqnarray*}
& & \|(U(t, \tau_j+0)(I - P(\tau_j+0)) - U(t, \tau_{j} + \varepsilon)(I - P(\tau_{j} + \varepsilon)))g_{j+q}\|_\alpha   \\
& & \le \| U(t, \tau_{j} + \varepsilon)(I - P(\tau_{j} + \varepsilon)) (U(\tau_{j} + \varepsilon, \tau_{j}+0)- I)g_{j+q}\|_\alpha  \\
& & \le \|(I - P(t))U(t,\tau_j +\varepsilon)\|_\alpha \|(U(\tau_{j} + \varepsilon, \tau_{j}+0)- I)g_{j+q}\|_\alpha  \\
& &  \le M e^{-\beta(t - \tau_j - \varepsilon)} \frac{1}{\alpha_1 - \alpha} C_{1-\alpha_1 + \alpha}\varepsilon^{\alpha_1-\alpha}\|g_{j+q}\|_{\alpha_1}, \\[2mm]
& & \| (U(t + h, \tau_{j} + \varepsilon + h)(I - P(\tau_{j} + \varepsilon + h))  \\
& & - U(t+h, \tau_{j+q}+0)(I - P(\tau_{j+q}+0)))g_{j+q} \|_\alpha  \\
& & =  \| U(t + h, \tau_{j} + \varepsilon + h)(I - P(\tau_{j} + \varepsilon + h))
(U(\tau_{j} + \varepsilon + h, \tau_{j+q}+0) - I)g_{j+q}\|_\alpha   \\
& & \le  M e^{-\beta(t - \tau_j - \varepsilon)}  \frac{1}{\alpha_1 - \alpha} C_{1-\alpha_1 + \alpha}\varepsilon^{\alpha_1-\alpha}\|g_{j+q}\|_{\alpha_1}.
\end{eqnarray*}

The second difference in (\ref{ap2}) is estimated using inequality (\ref{exp-2}) and the following transformation:
\begin{eqnarray*}
& & \| U(t, \tau_{j} + \varepsilon)(I - P(\tau_{j} + \varepsilon)) -  U(t + h, \tau_{j} + \varepsilon + h)(I - P(\tau_{j} +
\varepsilon + h)) \|_\alpha  \\
& & = \| U(t, \tau_{j} + \varepsilon)(I - P(\tau_{j} + \varepsilon)) -  U_2(t, \tau_{j} + \varepsilon)(I - P_2(\tau_{j} + \varepsilon)) \|_\alpha  \\
& & = \|U(t, \eta_{i})(I - P(\eta_{i}))U(\eta_{i}, \eta_{j+1})U(\eta_{j+1}, \tau_{j} + \varepsilon)  \\
& & - U_2(t, \eta_{i})(I - P(\eta_{i})U_2(\eta_{i}, \eta_{j+1})U_2(\eta_{j+1}, \tau_{j} + \varepsilon)\|_\alpha  \\
& & \le \| (U(t, \eta_{i}) - U_2(t, \eta_{i}))(I - P(\eta_{i}))U(\eta_{i}, \eta_{j+1})U(\eta_{j+1}, \tau_{j} + \varepsilon)\|_\alpha  \\
& & + \| U_1(t, \eta_{i})(P(\eta_{i})U(\eta_{i}, \eta_{j+1}) - P_2(\eta_{i})U_2(\eta_{i}, \eta_{j+1})U(\eta_{j+1}, \tau_{j} + \varepsilon)\|_\alpha  \\
& & + \|U_2(t, \eta_{i})P_2(\eta_{i})U_2(\eta_{i}, \eta_{j+1})(U(\eta_{j+1}, \tau_{j} + \varepsilon) - U_2(\eta_{j+1}, \tau_{j} + \varepsilon))\|_\alpha.
\end{eqnarray*}

Therefore,
\begin{eqnarray} \label{in7}
 \sum_{j \in {Z}} \| (G(t + h, \tau_{j+q}+0) - G(t,\tau_j+0))g_{j+q}\|_\alpha \le \Gamma_{12}(\varepsilon)\sup_j\|g_j\|_{\alpha_1},
\end{eqnarray}
where $\Gamma_{12}(\varepsilon) \to 0$  as $\varepsilon \to 0.$

The second integral and first sum in (\ref{in40}) are estimated as in (\ref{in30}):
\begin{eqnarray*}
\int_{-\infty}^\infty\|G(t,s))(f(s+h) - f(s))\|_\alpha ds + \sum_{j \in {Z}} \|U(t,\tau_j+0)(g_{j+q} - g_j)\|_\alpha \le
 M_3 \varepsilon
\end{eqnarray*}
since $h$ is $\varepsilon$-almost period of the right-hand side of the equation.
\vspace{2mm}

As a result of these evaluations, we get
$$\|u_0(t+h) - u_0(t)\|_\alpha \le \Gamma(\varepsilon) \ \ {\rm for} \ \ t \in {R}, \ |t-\tau_j| > \varepsilon, \ j \in {Z},$$
with $\Gamma(\varepsilon) \to 0$ as $\varepsilon \to 0.$ The last inequality implies that the function $u_0(t)$ is W-almost periodic
as function ${R} \to X^\alpha.$
\end{proof}

\begin{corollary}
Assume that Eq. (\ref{in1}), (\ref{in2}) satisfies the following:

i) conditions ${\bf (H1)}$ -- ${\bf (H3)}$, ${\bf (H7)};$

ii) the sequence $\{g_j\}$ of $g_j \in X^{\alpha}$ is almost periodic;

iii) the corresponding homogeneous equation is exponentially dichotomous.

Then the equation has a unique W-almost periodic solution $u_0(t) \in \mathcal{PC}({R}, X^\gamma)$ with $\gamma < \alpha.$
\end{corollary}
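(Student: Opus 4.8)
The plan is to show that the very same function (\ref{inlin3}) furnishes the required solution, but now measured in the weaker norm $\|\cdot\|_\gamma$. First I would observe that the construction and uniqueness of the bounded solution are unaffected: the Green's-function bound (\ref{inlin5}) together with $g_j\in X^\alpha$ and the separation of the $\tau_j$ shows, exactly as in (\ref{in30}), that $u_0(t)$ is bounded in $X^\alpha$, and hence a fortiori in $X^\gamma$, since under ${\bf (H1)}$ one has $\|x\|_\gamma\le C\|x\|_\alpha$ for $\gamma<\alpha$. Thus the only genuine task is to prove that $u_0$ is W-almost periodic as a map ${R}\to X^\gamma$, i.e. that condition iii) of Definition \ref{def3} holds in the $X^\gamma$-norm.

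To this end I would rerun the estimates in the proof of Theorem \ref{thm1} with $\alpha$ replaced by $\gamma$ throughout, using the same common almost period $h$ from Lemma \ref{lem1} and the same decomposition (\ref{in40}) of $\|u_0(t+h)-u_0(t)\|_\gamma$. Inspecting that proof, assumption ${\bf (H8)}$ (the extra regularity $g_j\in X^{\alpha_1}$ with $\alpha_1>\alpha$) was used in exactly one place: the impulsive sum (\ref{in7}), and there only in the first and third differences of (\ref{ap2}), where the displacement of an impulse point $\tau_j$ by at most $\varepsilon$ produces the factor $(U(\tau_j+\varepsilon,\tau_j+0)-I)g_{j+q}$. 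Its smallness was extracted from (\ref{evop2}) as $\varepsilon^{\alpha_1-\alpha}\|g_{j+q}\|_{\alpha_1}$, which is precisely where $\alpha_1>\alpha$ was indispensable.

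The key substitution is to trade this order of smoothness against the order we are willing to lose in the conclusion. Writing $g_j\in X^\alpha=X^{\gamma+(\alpha-\gamma)}$ and applying (\ref{evop2}) with base index $\gamma$ and smoothing index $\nu=\alpha-\gamma\in(0,1)$ (admissible since $\gamma+\nu=\alpha<1$) gives
$$\|(U(\tau_j+\varepsilon,\tau_j+0)-I)g_{j+q}\|_\gamma\le L_Q\,\varepsilon^{\alpha-\gamma}\|g_{j+q}\|_\alpha,$$
uniformly in $j$. Hence $\alpha-\gamma>0$ now plays exactly the role previously played by $\alpha_1-\alpha$, and the impulsive sum is again bounded by $\Gamma_{12}(\varepsilon)\sup_j\|g_j\|_\alpha$ with $\Gamma_{12}(\varepsilon)\to0$; this is the precise point at which the hypothesis $\gamma<\alpha$ is consumed.

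It remains to verify that every other ingredient survives the passage to the $X^\gamma$-norm, and this I expect to be the routine but most laborious part. The singular-integral blocks $I_{11},\dots,I_{15}$, the projector-difference bound (\ref{sm10}), and the perturbation estimate (\ref{evol44}) of Lemma \ref{lem2a} all carry over after relabeling $\alpha\mapsto\gamma$; in fact the relevant kernels become only \emph{less} singular (the exponents $-\alpha$ and $1-2\alpha$ are replaced by $-\gamma$ and $1-2\gamma$), so integrability over the intervals between impulses is preserved for every $\gamma\in(0,1)$. The discrete-dichotomy machinery behind the exponentially convergent series $I_{15}$ (via Lemma \ref{lem4}) is insensitive to the value of the index in $(0,1)$, so $I_{15}$ stays summable with constant tending to $0$, and the final almost-period terms for $f$ and $\{g_j\}$ are only easier in the weaker norm. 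Collecting all bounds gives $\|u_0(t+h)-u_0(t)\|_\gamma\le\Gamma(\varepsilon)$ for $|t-\tau_j|>\varepsilon$ with $\Gamma(\varepsilon)\to0$, which is condition iii) of Definition \ref{def3} in $X^\gamma$; together with the boundedness above this yields that $u_0\in\mathcal{PC}({R},X^\gamma)$ is W-almost periodic, uniqueness being inherited from the exponential dichotomy as in Theorem \ref{thm1}.
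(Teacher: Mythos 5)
Your proposal is correct and is essentially the argument the paper intends: the corollary is stated without a separate proof precisely because it follows by rerunning the proof of Theorem \ref{thm1} in the $X^\gamma$-norm, and you have correctly isolated the single place where the regularity surplus matters, namely the smoothing estimate (\ref{evop2}) applied to $(U(\tau_j+\varepsilon,\tau_j+0)-I)g_{j+q}$ in (\ref{ap2}), where the gap $\alpha_1-\alpha$ of ${\bf (H8)}$ is replaced by the gap $\alpha-\gamma$ between the regularity of the data and the norm of the conclusion. Your remaining observations (boundedness in $X^\alpha$ implies boundedness in $X^\gamma$ under ${\bf (H1)}$, and all singular kernels become less singular under $\alpha\mapsto\gamma$) complete the verification in the same way the paper's Theorem \ref{thm1} proof does.
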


\vspace{2mm}

Now we consider a nonlinear equation with fixed moments of impulsive action:
\begin{eqnarray} \label{fix1}
& & \frac{du}{dt} + (A + A_1(t))u =  f(t, u), \quad t \not= \tau_j, \\
& & \Delta u|_{t=\tau_j} = u(\tau_j + 0) - u(\tau_j) = B_j u(\tau_j) + g_j(u(\tau_j)), \quad j \in Z, \label{fix2}
\end{eqnarray}

\begin{theorem} \label{theorem2}
Let ua consider Eq. (\ref{fix1}), (\ref{fix2}) in some domain
$U^\alpha_\rho = \{ x \in X^\alpha: \ \| x\|_\alpha \le \rho\}$ of space $X^\alpha.$
Assume that

1) the equation satisfies assumptions ${\bf (H1)}$ -- ${\bf (H4)}$, $\tau_j = \tau_j(0);$

2) the corresponding linear equation is exponentially dichotomous with constants $\beta > 0$ and $M \ge 1$;

3) the function $f(t,u): \ {R} \times U^\alpha_\rho \to X$ is continuous in $u$, W-almost periodic and locally H\"{o}lder continuous in $t$
uniformly with respect to $u \in U^\alpha_\rho$ and
there exist constants $N_1 > 0$ and  $\nu > 0$ such that
\begin{eqnarray*}
\|f(t_1, u_1) - f(t_2, u_2)\| \le N_1(|t_1 - t_2|^\nu + \|u_1 - u_2\|_\alpha)
\end{eqnarray*}
if $u_1, u_2 \in U_\rho^\alpha,$ and points $t_1$ and $t_2$ belong to the same interval of continuity;

4) the sequence $\{g_j(u)\}$ of continuous functions $U^\alpha_\rho \to X^{\alpha_1}$ is almost periodic uniformly with respect to $u \in U^\alpha_\rho$ and
$\|g_j(u_1) - g_j(u_2)\|_{\alpha} \le N_1 \|u_1 - u_2\|_\alpha, \ j \in {Z}.$
Also $\|g_j(u_1) - g_j(u_2)\|_{\alpha_1} \le N_1 \|u_1 - u_2\|_{\alpha_1}$ for $j \in {Z}$ and $u \in U^\alpha_\rho \cap X^{\alpha_1}$
with some $\alpha_1 > \alpha;$

5) the functions $\|f(t,0)\|_\alpha$ and $\|g_j(0)\|_{\alpha_1}$ are uniformly bounded for $t \in {R}, j \in {Z};$

 6) $N_1 M_* < 1$ and $\rho \ge M_0 M_* /(1 - N_1 M_*)$, where
 $$M_* = \frac{M_1}{1 - e^{-\beta_1 \theta}}\left( 1 + \frac{C_\alpha Q^{1-\alpha}}{1-\alpha}\right)$$
 and constants $\beta_1$ and $M_1$ are defined by Lemma \ref{lem4}.

Then in domain $U^\alpha_\rho$ for sufficiently small $N_1 > 0$ there exists a unique W-almost periodic solution $u_0(t)$
of Eq. (\ref{fix1}), (\ref{fix2}).

\end{theorem}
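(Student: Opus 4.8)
The plan is to reformulate the problem as a fixed-point equation for which Theorem~\ref{thm1} supplies the inversion of the linear part, and then apply the Banach contraction principle. Let $\mathcal{M}$ be the set of all W-almost periodic functions $v \in \mathcal{PC}({R}, X^\alpha)$ whose discontinuities are contained in $\{\tau_j\}$, which are locally H\"older continuous on the intervals of continuity, and which satisfy $\|v\|_{PC} \le \rho$; equipped with the metric induced by $\|\cdot\|_{PC}$ this is a complete metric space. Given $v \in \mathcal{M}$, I freeze the nonlinearities and consider the \emph{linear} inhomogeneous problem (\ref{in1}), (\ref{in2}) with data $\hat f(t) := f(t, v(t))$ and $\hat g_j := g_j(v(\tau_j))$. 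If this data satisfies ${\bf (H7)}$ and ${\bf (H8)}$, then Theorem~\ref{thm1} yields a unique W-almost periodic solution, represented by formula (\ref{inlin3}); I set $\mathcal{F}(v)$ equal to this solution. A fixed point of $\mathcal{F}$ in $\mathcal{M}$ is precisely a W-almost periodic solution of (\ref{fix1}), (\ref{fix2}) lying in $U^\alpha_\rho$.

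First I would check that $\mathcal{F}$ is well defined. The membership $\hat g_j \in X^{\alpha_1}$ with $\alpha_1 > \alpha$ is immediate from 4), and the local H\"older continuity of $\hat f$ follows from the H\"older bound in 3) together with the local H\"older continuity of $v$ built into $\mathcal{M}$; that $\mathcal{F}(v)$ is again locally H\"older, so that $\mathcal{M}$ is invariant, comes from the smoothing estimate (\ref{evop2}). The genuinely delicate point --- which I expect to be the main obstacle --- is that $\hat f$ is W-almost periodic and $\{\hat g_j\}$ is almost periodic, i.e. that substitution of an almost periodic argument preserves almost periodicity. For this I would use Lemma~\ref{lem1} to extract a relatively dense set of $\varepsilon$-almost periods $r$ (with associated integer shift $q$) common to $\{\tau_j\}$, to $f(\cdot,u)$ uniformly in $u$, and to $v$. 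For such $r$ and $t$ with $|t-\tau_k| \ge \varepsilon$ I split
\[
\|\hat f(t+r) - \hat f(t)\| \le \|f(t+r, v(t+r)) - f(t+r, v(t))\| + \|f(t+r, v(t)) - f(t, v(t))\|,
\]
estimating the first term by $N_1\|v(t+r) - v(t)\|_\alpha$ via 3) and the second by the uniform W-almost periodicity of $f$; both are $O(\varepsilon)$. An analogous splitting, now also using that $\{v(\tau_j)\}$ is an almost periodic sequence, establishes the almost periodicity of $\{\hat g_j\}$.

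Once $\mathcal{F}: \mathcal{M} \to \mathcal{M}$ is well defined I would verify the two hypotheses of the contraction principle. For invariance, I apply the a priori estimate of the form (\ref{in30}) with constant $M_*$: bounding $\|f(t,0)\|$ and $\|g_j(0)\|_{\alpha_1}$ by a common $M_0$ through 5), and the deviations by the Lipschitz bounds, gives $\|\mathcal{F}(v)\|_{PC} \le M_*(M_0 + N_1\rho)$. The requirement $\rho \ge M_0 M_*/(1 - N_1 M_*)$ in 6) is exactly what forces this quantity to be $\le \rho$, so $\mathcal{F}(\mathcal{M}) \subset \mathcal{M}$. For the contraction estimate, the representation (\ref{inlin3}) is linear in the data, whence
\[
\mathcal{F}(v_1)(t) - \mathcal{F}(v_2)(t) = \int_{-\infty}^{\infty} G(t,s)\bigl(f(s,v_1(s)) - f(s,v_2(s))\bigr)\,ds + \sum_{j \in {Z}} G(t,\tau_j+0)\bigl(g_j(v_1(\tau_j)) - g_j(v_2(\tau_j))\bigr),
\]
and the same estimate together with the Lipschitz constants in 3) and 4) gives $\|\mathcal{F}(v_1) - \mathcal{F}(v_2)\|_{PC} \le N_1 M_* \|v_1 - v_2\|_{PC}$.

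Since $N_1 M_* < 1$ by 6), the map $\mathcal{F}$ is a contraction on the complete space $\mathcal{M}$, and the Banach fixed-point theorem furnishes a unique $u_0 \in \mathcal{M}$ with $\mathcal{F}(u_0) = u_0$; this $u_0$ is the asserted unique W-almost periodic solution of (\ref{fix1}), (\ref{fix2}) in $U^\alpha_\rho$. The stipulation that $N_1$ be sufficiently small serves both to make $N_1 M_* < 1$ feasible and to ensure that along the iteration the arguments $v(t)$ stay inside $U^\alpha_\rho$, where $f$ and $g_j$ are defined; uniqueness within the larger class of all W-almost periodic solutions follows because any such solution must satisfy (\ref{inlin3}) and hence be a fixed point of $\mathcal{F}$.
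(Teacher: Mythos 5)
Your overall strategy---freezing the nonlinearity, inverting the dichotomous linear part through its Green's function, and running the Banach contraction principle on a ball of W-almost periodic functions, with condition 6) supplying both invariance and the contraction constant $N_1 M_* < 1$---is exactly the paper's. The use of Lemma \ref{lem1} to push almost periodicity through the substitutions $t \mapsto f(t,v(t))$ and $j \mapsto g_j(v(\tau_j))$ also matches what the paper does (implicitly, by reference to the proof of Theorem \ref{thm1}).

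There is, however, one step that does not hold as written. You build local H\"older continuity into the definition of $\mathcal{M}$ and then assert that $\mathcal{M}$ is complete under the $\|\cdot\|_{PC}$ metric. It is not: a uniform limit of locally H\"older continuous functions need not be H\"older continuous unless the H\"older seminorms are uniformly controlled, so the fixed-point theorem cannot be applied to your $\mathcal{M}$ directly. You were forced into this because you define $\mathcal{F}(v)$ as \emph{the solution furnished by Theorem \ref{thm1}}, which requires the frozen datum $\hat f(t)=f(t,v(t))$ to satisfy ${\bf (H7)}$, hence $v$ to be H\"older. The paper sidesteps the issue by defining $\mathcal{F}\varphi$ directly through the integral/sum formula (\ref{inlin3}), which makes sense for any W-almost periodic $\varphi$ with $\|\varphi\|_{PC}\le\rho$, H\"older or not; it performs the contraction on that larger, genuinely complete set $\mathcal{M}_\rho$, and only \emph{afterwards} proves that the fixed point $\varphi_0$ is locally H\"older on each interval of continuity (via the smoothing estimate (\ref{evop2}) applied to the representation), so that $f(t,\varphi_0(t))$ satisfies ${\bf (H7)}$, $\{\varphi_0(\tau_k)\}$ is an almost periodic sequence (Lemma 37 of \cite{SP}), and the uniqueness part of Theorem \ref{thm1} for the frozen linear equation (\ref{fix1a}), (\ref{fix2a}) identifies $\varphi_0$ with an actual solution of (\ref{fix1}), (\ref{fix2}). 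Your argument is repaired either by adopting that order of operations or by adding a uniform bound on the H\"older seminorm to the definition of $\mathcal{M}$ and checking that $\mathcal{F}$ preserves it (the explicit constant $C$ in the paper's a posteriori estimate $\|\varphi_0(t+\delta)-\varphi_0(t)\|_\alpha\le C\delta^{\alpha_1-\alpha}$ shows this is possible).
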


\begin{proof}
Denote by $\mathcal{M}_{\varrho}$ the set of all W-almost periodic functions $\varphi: {R} \to X^\alpha$
with discontinuity points $\tau_j, j \in {Z},$ satisfying the inequality $\|\varphi\|_{PC} \le \rho$.
In $\mathcal{M}_{\rho},$ we define the operator
\begin{eqnarray*}
(\mathcal{F}\varphi)(t) = \int_{-\infty}^{\infty} G(t,s)f(s,\varphi(s)) ds +
\sum_{j \in {Z}} G(t,\tau_j+0)g_j(\varphi(\tau_j)).
\end{eqnarray*}

Proceeding in the same way as in the proof of Theorem \ref{thm1}, we proof that
$(\mathcal{F}\varphi)(t)$ is a W-almost periodic function and
$\mathcal{F}: \mathcal{M}_{\rho} \to \mathcal{M}_{\rho}$ for $\rho > 0$ satisfying Condition 6).

Next, $\mathcal{F}$ is a contracting operator in $\mathcal{M}_{\rho}$ by sufficiently small $N_1 > 0.$

Hence, there exists $\varphi_0 \in \mathcal{M}_{\rho}$ such that
\begin{eqnarray*}
\varphi_0(t) = \int_{-\infty}^{\infty} G(t,s)f(s, \varphi_0(s)) ds +
\sum_{j \in {Z}} G(t,\tau_j+0)g_j(\varphi_0(\tau_j)).
\end{eqnarray*}

The function $\varphi_0(t)$ is locally H\"{o}lder continuous on every interval $(\tau_j, \tau_{j+1}), j \in {Z}$. Actually,
\begin{eqnarray*}
& & \varphi_0(t+\delta) - \varphi_0(t) = \int_{-\infty}^{\infty} G(t+\delta,s)f(s, \varphi_0(s)) ds -
\int_{-\infty}^{\infty} G(t,s)f(s, \varphi_0(s)) ds  \\
& & + \sum_{j \in {Z}} G(t + \delta,\tau_j+0)g_j(\varphi_0(\tau_j)) - \sum_{j \in {Z}} G(t,\tau_j+0)g_j(\varphi_0(\tau_j))  \\
& & = \int_{-\infty}^t(V(t+\delta, t) - I)U(t,s)(I-P(s))f(s, \varphi_0(s)) ds  \\
& & - \int^{\infty}_{t+\delta} (V(t+\delta, t) - I)U(t,s)P(s)f(s, \varphi_0(s)) ds  \\
& & + \int_{t}^{t+\delta} V(t+\delta, s)(I-P(s))f(s, \varphi_0(s)) ds +
 \int_{t}^{t+\delta} V(t, s)P(s)f(s, \varphi_0(s)) ds \\
& & + \sum_{\tau_j \le t}(V(t+\delta, t) - I) U(t,\tau_j+0)(I - P(\tau_j+0))g_j(\varphi_0(\tau_j))  \\
& & + \sum_{\tau_j > t}(V(t+\delta, t) - I) U(t,\tau_j+0) P(\tau_j+0)g_j(\varphi_0(\tau_j)).
\end{eqnarray*}
Applying (\ref{evop2}), (\ref{gr1}), (\ref{gr2}), and (\ref{in30}), we conclude that for every interval $t \in (t', t'')$
not containing impulse points $\tau_j,$ there exists positive constant $C$ such that
$\|\varphi_0(t+\delta) - \varphi_0(t)\|_\alpha \le  C \delta^{\alpha_1-\alpha}.$

The local H\"{o}lder continuity of $f(t, \varphi_0(t))$ follows from
\begin{eqnarray*}
& & \|f(t, \varphi_0(t)) - f(s, \varphi_0(s))\| \le N_1 \left( |t-s|^\nu + \|\varphi_0(t) - \varphi_0(s)\|_\alpha\right)  \\
 & & \le C_1\left(|t-s|^\nu + |t-s|^{\alpha_1-\alpha} \right).
\end{eqnarray*}
By Lemma 37, \cite{SP}, p. 214, if $\varphi_0(t)$ is W-almost periodic and $\inf_k (\tau_{k+1} - \tau_k) > 0$, then
$\{\varphi_0(\tau_k)\}$ is an almost periodic sequence.

The linear inhomogeneous equation
\begin{eqnarray} \label{fix1a}
& & \frac{du}{dt} + (A + A_1(t))u =  f(t, \varphi_0(t)), \quad t \not= \tau_j, \\
& & \Delta u|_{t=\tau_j} = u(\tau_j + 0) - u(\tau_j) = B_j u(\tau_j) + g_j(\varphi_0(\tau_j)), \quad j \in  Z, \label{fix2a}
\end{eqnarray}
has a unique W-almost periodic solution in the sense of Definition 4. Due to the uniqueness, it coincides with  $\varphi_0(t).$

Hence, the W-almost periodic function $\varphi_0(t): {R} \to X^\alpha$ satisfies Eq. (\ref{fix1})
for $t \in (\tau_j, \tau_{j+1})$ and difference equation (\ref{fix2}) for $t = \tau_j.$

\end{proof}

Now we study the stability of the almost periodic solution assuming exponential stability of the linear equation.
First, using ideas in \cite{RT}, we prove following generalized Gronwall inequality for impulsive systems.
\begin{lemma} \label{lem-in}
 Assume that $\{t_j\}$ is an increasing  sequence of real numbers such that $Q \ge t_{j+1} - t_{j} \ge \theta > 0$
 for all $j$,
 $M_1, M_2,$ and $M_3$ are positive constants, and $\alpha \in (0,1).$
 Then there exists a positive constant $\tilde C$ such that positive piecewise continuous function $u: [t_0,t] \to {R}$
 satisfying
\begin{eqnarray}
 & & z(t) \le M_1 z_0 + M_2 \sum_{j=1}^m \int_{t_{j-1}}^{t_j} (t_{j} - s)^{-\alpha}z(s)ds + M_2 \int_{t_{m}}^{t} (t - s)^{-\alpha}z(s)ds  \nonumber\\
& & + M_3 \sum_{j=1}^m z(t_j) \quad {\rm for} \quad t \in (t_m, t_{m+1}]  \label{gron1}
\end{eqnarray}
also satisfies
\begin{eqnarray} \label{gron2}
 z(t) \le M_1 z_0 \tilde C  \left(  1 + M_2 \tilde C \frac{Q^{1-\alpha}}{1-\alpha} + M_3 \tilde C\right)^{m}.
\end{eqnarray}
\end{lemma}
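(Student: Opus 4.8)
The plan is to reduce the impulsive inequality to a finite iteration of the continuous generalized Gronwall inequality already established in Lemma \ref{lem-gr}, proceeding by induction on the interval index $m$. First I would isolate, for $t \in (t_m, t_{m+1}]$, the part of the right-hand side of (\ref{gron1}) that is frozen once the behaviour of $z$ on $[t_0, t_m]$ is known; that is, I set
$$C_m = M_1 z_0 + M_2 \sum_{j=1}^m \int_{t_{j-1}}^{t_j} (t_j - s)^{-\alpha} z(s)\, ds + M_3 \sum_{j=1}^m z(t_j),$$
so that (\ref{gron1}) reads $z(t) \le C_m + M_2 \int_{t_m}^t (t-s)^{-\alpha} z(s)\, ds$ on $(t_m, t_{m+1}]$, with $C_0 = M_1 z_0$.

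On the single interval $(t_m, t_{m+1}]$ this is precisely a continuous inequality of the type covered by Lemma \ref{lem-gr}. After the shift $\tau = t - t_m$, $w(\tau) = z(t_m + \tau)$, the hypothesis becomes $w(\tau) \le C_m + M_2 \int_0^\tau (\tau - \sigma)^{-\alpha} w(\sigma)\, d\sigma$ for $\tau \in (0, t_{m+1} - t_m] \subseteq (0, Q]$, i.e. the case $a_1 = C_m$, $a_2 = 0$, $b = M_2$, $\beta = \alpha$. Lemma \ref{lem-gr} then yields
$$z(t) \le C_m \tilde C, \qquad t \in (t_m, t_{m+1}],$$
with $\tilde C = \tilde C(\alpha, M_2, Q)$ the constant furnished there; this is the $\tilde C$ appearing in (\ref{gron2}). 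In particular $z(t_{m+1}) \le C_m \tilde C$ as well, since $t_{m+1}$ lies in the half-open interval on which the estimate holds.

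It then remains to propagate the bound across the impulses. Using $z(s) \le C_m \tilde C$ on $(t_m, t_{m+1}]$, the elementary identity $\int_{t_m}^{t_{m+1}} (t_{m+1}-s)^{-\alpha}\, ds = (t_{m+1}-t_m)^{1-\alpha}/(1-\alpha) \le Q^{1-\alpha}/(1-\alpha)$, and $z(t_{m+1}) \le C_m \tilde C$, I would bound
$$C_{m+1} = C_m + M_2 \int_{t_m}^{t_{m+1}} (t_{m+1}-s)^{-\alpha} z(s)\, ds + M_3 z(t_{m+1}) \le K C_m, \quad K = 1 + M_2 \tilde C \frac{Q^{1-\alpha}}{1-\alpha} + M_3 \tilde C.$$
The recursion $C_{m+1} \le K C_m$ with $C_0 = M_1 z_0$ gives $C_m \le M_1 z_0\, K^m$, and combining this with $z(t) \le C_m \tilde C$ on $(t_m, t_{m+1}]$ delivers exactly (\ref{gron2}).

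The one point requiring care, rather than any serious obstacle, is the legitimacy of invoking Lemma \ref{lem-gr} on a subinterval: one must check that local integrability and nonnegativity survive the time shift, and that the frozen term $C_m$ is finite at each stage, which follows inductively from the previous estimates. The rest is the clean telescoping of the constants $C_m$, in which the geometric factor $K$ collects the two distinct contributions, namely the weakly singular integral over one interval and the single impulsive jump.
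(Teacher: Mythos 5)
Your proof is correct and follows essentially the same route as the paper's: induction over the impulse intervals, applying the single-interval generalized Gronwall inequality (Lemma \ref{lem-gr}) on each $(t_m,t_{m+1}]$, and accumulating the factor $1 + M_2\tilde C\,Q^{1-\alpha}/(1-\alpha) + M_3\tilde C$ per interval. Your reorganization via the explicit recursion $C_{m+1}\le K C_m$ is a slightly cleaner bookkeeping of the same telescoping that the paper carries out by substituting the inductive hypothesis directly into the sums.
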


\begin{proof}
 We apply the method of mathematical induction. At the interval $t \in [t_0, t_1]$ the inequality (\ref{gron1}) has the form
 $$z(t) \le  M_1 z_0 + M_2 \int_{t_0}^{\tau_{1}} (\tau_{1} - s)^{-\alpha}z(s)ds.$$
 By Lemma \ref{lem-gr} there exists $\tilde C$ such that
 \begin{eqnarray} \label{gron3}
 0 \le z(t) \le M_1 z_0 \tilde C, \ \ t \in [t_0,t_1], \ \ \tilde C = \tilde C(M_1, M_2, Q).
 \end{eqnarray}
 Hence, (\ref{gron2}) is true for $t \in [t_0, t_1].$
 Assume (\ref{gron2}) is true for $t \in [t_0, t_n]$ and prove it for  $t \in (t_{n}, t_{n+1}].$
 Hence, for $t \in (t_{n}, t_{n+1}]$ we have
 \begin{eqnarray*}
  & &  z(t) \le M_1 z_0 + M_2 \int_{t_{0}}^{t_1} (t_{1} - s)^{-\alpha}z(s)ds + M_3 z(t_1) \\
& & + M_2 \sum_{j=2}^n \int_{t_{j-1}}^{t_j} (t_{j} - s)^{-\alpha}z(s)ds +
M_3 \sum_{j=1}^n z(t_j) + M_2 \int_{t_{n}}^{t} (t - s)^{-\alpha}z(s)ds   \\
& & \le M_1 z_0 + M_2 \frac{Q^{1-\alpha}}{1-\alpha}M_1 z_0 \tilde C + M_3 M_1 z_0 \tilde C + M_2 \int_{t_{n}}^{t} (t - s)^{-\alpha}z(s)ds \\
& & + \sum_{j=2}^n \left( 1 + M_2 \tilde C \frac{Q^{1-\alpha}}{1-\alpha} + M_3 \tilde C\right)^j
\left(  M_2 \tilde C \frac{Q^{1-\alpha}}{1-\alpha} + M_3 \tilde C \right)  M_1 z_0  \\
& & =  M_1 z_0 + M_2 \frac{Q^{1-\alpha}}{1-\alpha}M_1 z_0 \tilde C + M_3 M_1 z_0 \tilde C + M_2 \int_{t_{n}}^{t} (t - s)^{-\alpha}z(s)ds \\
& & + \sum_{j=2}^n \left( 1 + M_2 \tilde C \frac{Q^{1-\alpha}}{1-\alpha} + M_3 \tilde C\right)^{j-1}
\left[\left(1 + M_2 \tilde C \frac{Q^{1-\alpha}}{1-\alpha} + M_3 \tilde C \right) - 1\right] M_1 z_0  \\
& & \le M_1 z_0 \left( 1 + M_2 \frac{Q^{1-\alpha}}{1-\alpha} \tilde C + M_3 \tilde C\right)^{n} +  M_2 \int_{t_{n}}^{t} (t - s)^{-\alpha}z(s)ds.
 \end{eqnarray*}
 Hence, for $t \in (t_n, t_{n+1}],$ the function $z(t)$ satisfies the inequality
 $$z(t) \le C_1 +  M_2 \int_{t_{n}}^{t} (t - s)^{-\alpha}z(s)ds,$$
 where $C_1 =  M_1 z_0 \left( 1 + M_2 \frac{Q^{1-\alpha}}{1-\alpha} \tilde C + M_3 \tilde C\right)^{n}.$
 Applying (\ref{gron3}) at the interval $(t_n, t_{n+1}],$ we obtain (\ref{gron2}). The lemma is proved.
 \end{proof}

\begin{theorem} \label{theorem3}
Let Eq. (\ref{fix1}), (\ref{fix2}) satisfy assumptions of 
 Theorem \ref{theorem2} and let the corresponding linear equation be exponentially stable.

Then for sufficiently small $N_1 > 0,$ the equation has a unique W-almost periodic solution $u_0(t),$
and this solution is exponentially stable.
\end{theorem}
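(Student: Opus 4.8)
The existence and uniqueness of the W-almost periodic solution $u_0(t)$ is already contained in Theorem~\ref{theorem2}: exponential stability of the linear equation is precisely the special case of an exponential dichotomy in which $P(t)\equiv 0$, so that the Green function reduces to $G(t,s)=U(t,s)$ for $t\ge s$ and $G(t,s)=0$ for $t<s$, and the fixed-point argument of Theorem~\ref{theorem2} applies verbatim. The entire content of the statement is therefore the \emph{exponential stability} of $u_0(t)$, and the plan is to extract it from the singular impulsive Gronwall inequality of Lemma~\ref{lem-in}.

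First I would fix an arbitrary solution $u(t)$ of (\ref{fix1}), (\ref{fix2}) starting at $t_0$ with $u(t_0)$ close to $u_0(t_0)$, put $w(t)=u(t)-u_0(t)$, and write the variation-of-constants representation
\begin{eqnarray*}
w(t) = U(t,t_0)w(t_0) + \int_{t_0}^t U(t,s)\bigl(f(s,u(s))-f(s,u_0(s))\bigr)ds \\
 + \sum_{t_0 \le \tau_j < t} U(t,\tau_j+0)\bigl(g_j(u(\tau_j))-g_j(u_0(\tau_j))\bigr),
\end{eqnarray*}
where the linear jumps $B_j w(\tau_j)$ are already absorbed into the homogeneous propagator $U$. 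Taking the $X^\alpha$-norm and invoking the stable estimates (\ref{gr1}), (\ref{gr2}) together with the Lipschitz bounds of conditions 3) and 4) gives, for $\tau_{m-1}<s\le\tau_m$, the weight $\tilde M e^{-\beta(t-s)}(\tau_{m(s)}-s)^{-\alpha}$ on the integral and $\tilde M e^{-\beta(t-\tau_j)}$ on the sum, so that
\begin{eqnarray*}
\|w(t)\|_\alpha \le M e^{-\beta(t-t_0)}\|w(t_0)\|_\alpha + N_1\tilde M\int_{t_0}^t e^{-\beta(t-s)}(\tau_{m(s)}-s)^{-\alpha}\|w(s)\|_\alpha ds \\
+ N_1 M\sum_{t_0\le\tau_j<t} e^{-\beta(t-\tau_j)}\|w(\tau_j)\|_\alpha,
\end{eqnarray*}
with $\tau_{m(s)}$ the first impulse point to the right of $s$ (replaced by $t$ on the last subinterval, where $U(t,s)=V(t,s)$). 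These are exactly the singular kernels admitted by Lemma~\ref{lem-in}.

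Next I would strip the decay by substituting $z(t)=e^{\beta(t-t_0)}\|w(t)\|_\alpha$. Multiplying through by $e^{\beta(t-t_0)}$ cancels all exponential weights inside the integral and the sum and yields
\begin{eqnarray*}
z(t) \le M z(t_0) + N_1\tilde M\sum_{j}\int_{t_{j-1}}^{t_j}(t_j-s)^{-\alpha}z(s)ds + N_1\tilde M\int_{t_m}^t(t-s)^{-\alpha}z(s)ds + N_1 M\sum_{j=1}^m z(\tau_j),
\end{eqnarray*}
with impulse points $t_j=\tau_j$. This is inequality (\ref{gron1}) with $M_1=M$, $M_2=N_1\tilde M$, $M_3=N_1 M$, so Lemma~\ref{lem-in} produces
\[
z(t)\le M z(t_0)\tilde C\Bigl(1+N_1\tilde M\tilde C\tfrac{Q^{1-\alpha}}{1-\alpha}+N_1 M\tilde C\Bigr)^m =: M z(t_0)\tilde C\, C_2^m .
\]

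Finally I would convert the $m$-dependence into a rate. By assumption ${\bf (H3)}$ consecutive impulse points are separated by at least $\theta$, hence $m\le (t-t_0)/\theta+1$, and returning to $w$,
\[
\|w(t)\|_\alpha = e^{-\beta(t-t_0)}z(t) \le M\tilde C\,C_2\,\|w(t_0)\|_\alpha\, e^{-(\beta-\theta^{-1}\ln C_2)(t-t_0)} .
\]
The main obstacle lies precisely here: Lemma~\ref{lem-in} supplies the \emph{growing} factor $C_2^m$, and stability survives only if this growth is dominated by the dichotomy decay, that is, if $\beta-\theta^{-1}\ln C_2>0$, equivalently $C_2<e^{\beta\theta}$. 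Since $C_2\to 1$ as $N_1\to 0$, this holds for all sufficiently small $N_1$, and then $\gamma=\beta-\theta^{-1}\ln C_2>0$ is a genuine decay exponent. What remains is the routine verification that the Lipschitz estimates are legitimate along the whole trajectory, i.e.\ that $u(t)$ stays in $U^\alpha_\rho$ once $\|w(t_0)\|_\alpha$ is small, which follows from the same a priori bound by a standard continuation argument.
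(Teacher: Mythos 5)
Your proposal is correct and follows essentially the same route as the paper: variation of constants for the difference of solutions, the stable-manifold estimates and Lipschitz bounds to produce the singular kernel, the exponential substitution $z(t)=e^{\beta t}\|u(t)-u_0(t)\|_\alpha$, and then Lemma~\ref{lem-in} with the count of impulse points converted into a decay rate for small $N_1$. The only cosmetic difference is that you bound the number of impulses by $(t-t_0)/\theta+1$ where the paper uses the density $p$ from (\ref{limp}) in the smallness condition $\beta>p\ln(1+M_2\tilde C\frac{Q^{1-\alpha}}{1-\alpha}+M_3\tilde C)$; both yield the same conclusion.
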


\begin{proof}
The existence and uniqueness of the W-almost periodic solution $u_0(t)$ follows from  Theorem \ref{theorem2}.
We prove its asymptotic stability. Let $u(t)$ be an arbitrary solution of the equation satisfying
$\|u(t_0) - u_0(t_0)\|_\alpha  \le \delta,$ where $\delta$ is small positive number.

Then by $t \ge t_0$ the difference of these solutions satisfies
 \begin{eqnarray*}
  & & u(t) - u_0(t) = U(t,t_0)( u(t_0) - u_0(t_0)) +  \int_{t_0}^t U(t,s)\Bigl(f(s,u(s)) - \\
  & & - f(s,u_0(s))\Bigl)ds + \sum_{t_0 \le \tau_k \le t} U(t,\tau_k + 0)\left( g_k(u(\tau_k)) - g_k(u_0(\tau_k)) \right).
 \end{eqnarray*}
Then for $t_0 \in (\tau_0, \tau_{1})$ and $t \in (\tau_j, \tau_{j+1}]$ we have
\begin{eqnarray*}
  & & \|u(t) - u_0(t)\|_\alpha \le \|U(t,t_0)(u(t_0) - u_0(t_0))\|_\alpha  \\
  & & + \int_{t_0}^{\tau_1}\|U(t,\tau_1)\|_\alpha \|V(\tau_1,s)(f(s,u(s)) - f(s,u_0(s)))\|_\alpha ds +... \\
  & & + \int_{\tau_{j-1}}^{\tau_j}\|U(t,\tau_j)\|_\alpha \|V(\tau_j,s)(f(s,u(s)) - f(s,u_0(s)))\|_\alpha ds \\
  & & + \int_{\tau_{j}}^{t} \|V(t,s)(f(s,u(s)) - f(s,u_0(s)))\|_\alpha ds + \\
  & & + \sum_{t_0 \le \tau_k \le t} \|U(t,\tau_k+0)\left( g_k(u(\tau_k)) - g_k(u_0(\tau_k)) \right)\|_\alpha  \\
  & & \le M e^{-\beta(t - t_0)}\|u(t_0) - u_0(t_0)\|_\alpha +
  M e^{-\beta(t - \tau_1)}\int_{t_0}^{\tau_1} \frac{L_Q N_1}{(\tau_1 - s)^\alpha}\|u(s) - u_0(s)\|_\alpha ds  \\
  & & +...+ M e^{-\beta(t - \tau_j)}\int_{\tau_{j-1}}^{\tau_j} \frac{L_Q N_1}{(\tau_j - s)^\alpha}\|u(s) - u_0(s)\|_\alpha ds \\
  & & + \int_{\tau_{j}}^{t} \frac{L_Q N_1}{(t - s)^\alpha}\|u(s) - u_0(s)\|_\alpha ds +
   \sum_{t_0 \le \tau_k \le t} M e^{-\beta(t - \tau_k)}N_1\|u(\tau_k) - u_0(\tau_k)\|_\alpha
 \end{eqnarray*}
 Denote $v(t) = e^{\beta t}\|u(t) - u_0(t)\|_\alpha, \ M_2 = e^{\beta Q}M L_Q N_1, M_3 = M N_1.$ Then
 \begin{eqnarray*}
 v(t) \le M v(t_0) + M_2 \int_{t_0}^{\tau_1} \frac{v(s)ds}{(\tau_1 - s)^\alpha} + ... +
 M_2 \int_{t_j}^{t} \frac{v(s)ds}{(\tau_j - s)^\alpha} + M_3 \sum_{k=1}^j v(\tau_k).
  \end{eqnarray*}
 Then by Lemma \ref{lem-in} we get
  \begin{eqnarray*}
 \|u(t) - u_0(t)\|_\alpha \le M \tilde C e^{-\beta(t-t_0)}\left( 1 + M_2 \tilde C \frac{Q^{1-\alpha}}{1-\alpha} + M_3 \tilde C\right)^{{i}(t,t_0)}
  \|u(t_0) - u_0(t_0)\|_\alpha
  \end{eqnarray*}
Therefore, if
  \begin{eqnarray*}
\beta > p \ln\left( 1 + M_2 \tilde C \frac{Q^{1-\alpha}}{1-\alpha} + M_3 \tilde C \right),
 \end{eqnarray*}
where $p$ is defined by (\ref{limp}), then W-almost periodic solution $u_0(t)$ of Eq. (\ref{fix1}), (\ref{fix2})
is asymptotically stable. This can be achieved by sufficiently small $N_1.$
\end{proof}


\section{Almost Periodic Solutions of Equations with Nonfixed Moments of Impulsive Action }
We consider the following equation with points of impulsive action
depending on solution
\begin{eqnarray} \label{ban1w}
& & \frac{du}{dt} + Au = f(t, u), \quad t \not= \tau_j(u), \\
& & u(\tau_j(u) + 0) - u(\tau_j(u)) = B_j u + g_j(u), \quad j \in  Z. \label{ban2w}
\end{eqnarray}

\begin{definition} \label{defst1} (\cite{LBS}).
 A solution $u_0(t)$ of Eq. (\ref{ban1w}), (\ref{ban2w}) defined for all $t \ge t_0,$ is called
 Lyapunov stable in space $X^\alpha$ if, for an arbitrary $\varepsilon > 0$ and $\eta > 0,$ there exists such a number
 $\delta = \delta(\varepsilon, \eta)$ that, for any other solution $u(t)$ of system,
 $\|u_0(t_0) - u(t_0)\|_\alpha < \delta$ implies that $\|u_0(t) - u(t)\|_\alpha < \varepsilon$
 for all $t \ge t_0$ such that $|t - \tau_j^0| > \eta,$ where $\tau_j^0$ are the times at which the solution
 $u_0(t)$ intersects the surfaces $t = \tau_j(u), j \in {Z}.$

 A solution $u_0(t)$ is said to be attractive, if for each $\varepsilon > 0, \eta > 0,$ and $t_0 \in {R},$
 there exist $\delta_0 = \delta_0(t_0)$ and $T = T(\delta_0, \varepsilon, \eta) > 0$ such that for any other solution $u(t)$
 of the system, $\|u_0(t_0) - u(t_0)\| < \delta_0$ implies $\|u_0(t) - u(t)\|_\alpha < \varepsilon$ for $t \ge t_0 + T$
 and $|t - \tau^0_k| > \eta.$

 A solution $u_0(t)$ is called asymptotically stable if it is stable and attractive.
 \end{definition}

\begin{theorem} \label{thm4}
Assume that in domain $U_\rho^\alpha = \{u \in X^\alpha, \|u\|_\alpha \le \rho\}$ Eq.
(\ref{ban1w}), (\ref{ban2w}) satisfies conditions (H1),  (H3) -- (H6), and

1) all solutions in domain $U_\rho^\alpha$ intersect each surface $t = \tau_j(u)$ no more then once;

2) $\| f(t_1, u) - f(t_2, u)\| \le H_1 |t_1 - t_2|,  \ H_1 > 0;$

3) $\| f(t, u_1) - f(t, u_2)\| + \|g_j(u_1) - g_j(u_2)\|_{\alpha} +
 |\tau_j(u_1) - \tau_j(u_2)| \le N_1\|u_1 - u_2\|_{\alpha},$ uniformly in $t \in {R}, u \in U_\rho^\alpha, j \in {Z}.$
 Also $\|g_j(u_1) - g_j(u_2)\|_{1} \le N_1 \|u_1 - u_2\|_{1}$ uniformly in $j \in Z, u \in U_\rho^\alpha \cap X^1;$

4) $A B_j = B_j A$ and $\|f(t,0)\| \le M_0, \ \|g_j(0)\|_{1} \le M_0$ for all $j \in {Z};$

5) the linear homogeneous equation
\begin{eqnarray} \label{lin1w}
& & \frac{du}{dt} + A u = 0, \quad t \not= \tau_j, \\
& & \Delta u|_{t=\tau_j} = u(\tau_j + 0) - u(\tau_j) = B_j u(\tau_j), \quad j \in Z, \label{lin2w}
\end{eqnarray}
 is exponentially stable in space $X^\alpha$
\begin{eqnarray*}
 \|U(t,s)u\|_\alpha \le M e^{-\beta (t-s)}\|u\|_\alpha, \ t\ge s, u \in X^\alpha,
\end{eqnarray*}
 where $\tau_j = \tau_j(0),$  $\beta > 0$ and $M \ge 1;$

 6) $N_1 M_* < 1$ and $\rho \ge M_0 M_* /(1 - N_1 M_*)$, where
 $$M_* = \frac{M_1}{1 - e^{-\beta_1 \theta}}\left( 1 + \frac{C_\alpha Q^{1-\alpha}}{1-\alpha}\right),$$
 where constants $\beta_1$ and $M_1$ are defined by Lemma \ref{lem4}.

Then for sufficiently small values of Lipschitz constant $N_1,$ Eq. (\ref{ban1w}), (\ref{ban2w})
has in $U_\rho^\alpha$ a unique W-almost periodic solution and this solution is exponentially stable.

\end{theorem}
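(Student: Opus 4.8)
The plan is to reduce the nonfixed-moment problem to the fixed-moment situation already settled in Theorem \ref{theorem2} and Theorem \ref{theorem3}, via a fixed-point argument in the space of W-almost periodic functions. First I would take $\mathcal{M}_\rho$ to be the set of W-almost periodic $\varphi:{R}\to X^\alpha$ with $\|\varphi\|_{PC}\le\rho$, and for each such $\varphi$ define its crossing instants $t_j^\varphi$ implicitly by $t_j^\varphi=\tau_j(\varphi(t_j^\varphi))$. The no-beating hypothesis (1) together with the separation bound $\inf_u\tau_{j+1}(u)-\sup_u\tau_j(u)\ge\theta$ from (H3) guarantees that each surface is met exactly once, so the $t_j^\varphi$ exist, are unique, and satisfy $t_{j+1}^\varphi-t_j^\varphi\ge\theta$; the Lipschitz bound $|\tau_j(u_1)-\tau_j(u_2)|\le N_1\|u_1-u_2\|_\alpha$ forces $|t_j^\varphi-\tau_j|$ to be of order $N_1\rho$, and one checks that $\{t_j^\varphi\}$ again has uniformly almost periodic sequences of differences.

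Next, with the sequence $\{t_j^\varphi\}$ now \emph{fixed}, I would consider the frozen problem $du/dt+Au=f(t,u)$ for $t\neq t_j^\varphi$ and $\Delta u=B_ju+g_j(u)$ at $t=t_j^\varphi$. The associated linear homogeneous equation is exponentially stable for the unperturbed instants $\tau_j$ by hypothesis (5); since $|t_j^\varphi-\tau_j|$ is small, Lemma \ref{lem4} shows it remains exponentially dichotomous (indeed stable) with the uniform constants $\beta_1,M_1$. Theorem \ref{theorem2} then furnishes a unique W-almost periodic solution $u=\mathcal{T}\varphi$, and condition (6) guarantees $\mathcal{T}:\mathcal{M}_\rho\to\mathcal{M}_\rho$. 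By the very construction of the $t_j^\varphi$, a fixed point of $\mathcal{T}$ is precisely a W-almost periodic solution of (\ref{ban1w}), (\ref{ban2w}).

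The heart of the argument is showing that $\mathcal{T}$ is a contraction for small $N_1$. For $\varphi_1,\varphi_2\in\mathcal{M}_\rho$ the difference $\mathcal{T}\varphi_1-\mathcal{T}\varphi_2$ is driven by three sources: the $u$-Lipschitz dependence of $f$ and of $g_j$, each contributing a factor $N_1\|\varphi_1-\varphi_2\|_{PC}$, and the mismatch of impulse instants $|t_j^{\varphi_1}-t_j^{\varphi_2}|$, which is again bounded by $N_1\|\varphi_1-\varphi_2\|_{PC}$ through the Lipschitz bound on $\tau_j$. The first two are handled exactly as in the proof of Theorem \ref{theorem2}. The delicate term is the third, and it is here that I expect the main obstacle: I would control it by the change-of-time device of Lemma \ref{lem4}, using the robustness estimates (\ref{sm10}) and (\ref{sm8}) for the projectors and Green's functions together with the discrete-dichotomy bound (\ref{dd4}), so that a shift of the impulse instants by $O(N_1\|\varphi_1-\varphi_2\|_{PC})$ changes the Green's function, and hence the solution, by the same order. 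Collecting these estimates yields $\|\mathcal{T}\varphi_1-\mathcal{T}\varphi_2\|_{PC}\le C N_1\|\varphi_1-\varphi_2\|_{PC}$ with $C$ independent of $N_1$, so $\mathcal{T}$ contracts once $N_1$ is small, and the Banach fixed-point theorem produces the unique W-almost periodic solution $u_0$.

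Finally, for exponential stability I would argue as in Theorem \ref{theorem3}. Let $u(t)$ be any solution with $\|u(t_0)-u_0(t_0)\|_\alpha$ small; it has its own crossing instants, close to those $\tau_j^0$ of $u_0$. Writing the variation-of-constants formula for $u-u_0$, bounding $f$ and $g_j$ by their Lipschitz constants, and invoking the exponential stability of the linear part, I obtain an impulsive integral inequality to which Lemma \ref{lem-in} applies; this gives $\|u(t)-u_0(t)\|_\alpha\le M\tilde C e^{-\beta(t-t_0)}(1+\cdots)^{i(t_0,t)}\|u(t_0)-u_0(t_0)\|_\alpha$, and the density bound (\ref{limp}) makes the exponent negative for small $N_1$. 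The technical price here, and the reason the conclusion is stated only away from the impulse set $|t-\tau_j^0|>\eta$, is the time-mismatch of the jumps: near a crossing the two solutions jump at slightly different instants, so $\|u-u_0\|_\alpha$ is momentarily of order one; confining the estimate to $|t-\tau_j^0|>\eta$ and absorbing the jump-instant discrepancy (again bounded by $N_1$ times the state difference) completes the proof.
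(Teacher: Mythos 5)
Your overall architecture (freeze the impulse instants, solve the resulting fixed-moment problem, close with a fixed point, then run the Gronwall argument of Lemma \ref{lem-in} for stability) is the right one, but the space in which you set up the contraction does not work. You claim $\|\mathcal{T}\varphi_1-\mathcal{T}\varphi_2\|_{PC}\le CN_1\|\varphi_1-\varphi_2\|_{PC}$ with the supremum over all $t$. This cannot hold: $\mathcal{T}\varphi_1$ and $\mathcal{T}\varphi_2$ jump at \emph{different} instants $t_j^{\varphi_1}\neq t_j^{\varphi_2}$, so on the mismatch interval between these two instants one function has already undergone the jump $B_ju+g_j(u)$ and the other has not, and their difference there is of order one, not of order $N_1\|\varphi_1-\varphi_2\|_{PC}$. (You notice exactly this phenomenon in the stability part --- it is why the conclusion is stated only for $|t-\tau_j^0|>\eta$ --- but it kills the contraction in the $PC$-norm just as surely.) Moreover, even after both functions have jumped, the smoothing estimates produce a singular factor of the form $(t-\tau''_j)^{-\alpha_1}$, so the difference is controlled by $N_1$ only at a distance bounded below from the impulse points. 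A secondary gap: you define the crossing instants by the implicit equation $t_j^\varphi=\tau_j(\varphi(t_j^\varphi))$ for an \emph{arbitrary} $\varphi\in\mathcal{M}_\rho$, but hypothesis 1) excludes beating only for \emph{solutions}; for a general W-almost periodic $\varphi$, whose modulus of continuity is not tied to $N_1$, the existence and uniqueness of $t_j^\varphi$ is not justified.

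The paper circumvents both difficulties by parametrizing not by functions but by almost periodic \emph{sequences} $y=\{y_j\}$, $y_j\in X^\alpha$, of intended impulse values: the frozen problem has impulse instants $\tau_j(y_j)$ (plain numbers, no implicit equation) and jumps $B_ju(\tau_j(y_j))+g_j(y_j)$; its bounded solution $u^*(t,y)$ is built by iteration, and the map $S(y)_j=u^*(\tau_j(y_j),y)$ is shown to be a contraction in the sequence norm $\sup_j\|y_j\|_\alpha$. In other words, the two candidate solutions are compared only at the points $\tau'_{i+1}$, which lie at distance at least $\theta$ from the mismatched impulse pair $(\tau'_i,\tau''_i)$, where the singular factor $(t-\tau''_i)^{-\alpha_1}$ is bounded by $\theta^{-\alpha_1}$ and the order-one mismatch interval has been left behind. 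If you replace your function-space contraction by this sequence-space contraction (and use condition 4, $AB_j=B_jA$, to estimate $\|U(t,s,y)-U(t,s,z)\|_\alpha$ explicitly, as the paper does in (\ref{exp-2d})--(\ref{exp-ddd})), your argument goes through; the stability part of your proposal is essentially the paper's.
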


\begin{proof}

1. First, using the method proposed in \cite{HPTT}, we proof the existence of the W-almost periodic solution.
 Let $y = \{y_j\}$ be an almost periodic sequence of elements $y_j \in X^\alpha, \|y_j\|_\alpha \le \rho.$
We consider the equation with fixed moments of impulsive action
\begin{eqnarray} \label{ban1y}
& & \frac{du}{dt} + A u = f(t, u), \quad t \not = \tau_j(y), \\
 & & u(\tau_j(y_j) + 0) - u(\tau_j(y_j)) = B_j u(\tau_j(y_j)) + g_j(y_j), \quad j \in Z. \label{ban2y}
\end{eqnarray}

By Lemma \ref{lem4}, if a constant $N_1$ sufficiently small, then corresponding to (\ref{ban1y}) and (\ref{ban2y}) the linear impulsive
equation (if $f \equiv 0, \ g_j(y_j) \equiv 0, j \in {Z},$) is exponentially stable.
Its evolution operator $U(t,\tau, y)$ satisfies estimate
\begin{eqnarray*}
\|U(t,\tau, y)u\|_\alpha \le M_1 e^{-\beta_1(t - \tau)}\|u\|_\alpha, \ t \ge \tau,
\end{eqnarray*}
with some positive constants $M_1 \ge M, \beta_1 \le \beta.$

The equation (\ref{ban1y}), (\ref{ban2y}) has a unique solution bounded on the axis which satisfies the integral equation
 \begin{eqnarray} \label{ban3y}
 u(t,y) = \int_{-\infty}^t U(t,\tau,y) f(\tau, u(\tau,y))d\tau + \sum_{\tau_j(y_j) \le t}
U(t,\tau_j(y_j)+0,y)g_j(y_j).
\end{eqnarray}

We choose $u_0(t,y) \equiv 0$ and construct the sequence of W-almost periodic functions \ $u_{n+1}(t,y) =$
$$ = \int_{-\infty}^t U(t,\tau,y) f(\tau, u_n(\tau,y))d\tau + \sum_{\tau_j(y_j) \le t}
U(t,\tau_j(y_j)+0,y)g_j(y_j), \ n = 0,1,... .$$
The proof of the W-almost periodicity of $u_{n+1}(t,y)$ in space $X^\alpha$ is similar to the proof of Theorem \ref{thm1}.

One can verify that for sufficiently small $N_1 > 0$ the sequence $\{u_{n}(t,y)\}$ converges to the
W-almost periodic solution $u^*(t,y): {R} \to X^\alpha$ of Eq. (\ref{ban3y}).
As in the proof to Theorem \ref{theorem2}, we prove that $u^*(t,y)$ is the W-almost periodic solution
of impulsive equation (\ref{ban1y}), (\ref{ban2y}).

Let $t \in (\tilde\tau_i, \tilde\tau_{i+1}),$ where $\tilde\tau_i = \tau_i(y_i).$
As in (\ref{in30}), we obtain
\begin{eqnarray*}
& & \|u^*(t,y)\|_\alpha \le \int_{-\infty}^t\|A^\alpha U(t,s, y)(f(s, 0) + f(s, u^*(s,y)) - f(s, 0))\| ds  \nonumber\\
& & + \sum_{\tau_j(y_j) \le t} \| U(t, \tilde\tau_j+0, y)(g_j(0) + g_j(y_j) - g_j(0))\|_\alpha  \nonumber\\
& & \le \frac{M_1}{1 - e^{-\beta_1 \theta}}\left( \frac{C_\alpha \Theta^{1-\alpha}}{1-\alpha}\left(M_0 + N_1 \sup_t\|u^*(t,y)\|_\alpha \right)
+ M_0 + N_1 \sup_j\|y_j\|_\alpha \right).
\end{eqnarray*}
Hence, $\sup_t\|u^*(t,y)\| \le \rho,$ where $\rho$ satisfies Condition 6).

If we choose the almost periodic sequence $y^* = \{y_j^*\}, y_j^* \in X^\alpha,$ such that
$$u^*(\tau_j( y_j^*), y^*) =  y_j^*$$
for all $j \in {Z},$ then the function $u^*(t, y^*)$ will be exactly
the W-almost period solution of Eq. (\ref{ban1w}), (\ref{ban2w}).

We consider the space $\mathcal{N}$ of sequences $y = \{y_j\}, \ y_j \in X^\alpha,$ with norm
$\| y\|_S = \sup_{j}\|y_j\|_\alpha$ and map $S: \ \mathcal{N} \to \mathcal{N},$
$$S(y) = \{u^*(\tau_j(y_j), y)\}_{j \in {Z}}.$$

$S$ maps the domain $\mathcal{N}_\rho = \{y \in \mathcal{N}, \|y\|_S \le \rho\}$ into itself.

Now we proof that $S$ is a contraction. Let, for definiteness, $\tilde\tau_j^1 = \tau_j(y_j) < \tilde\tau_j^2 = \tau_j(z_j).$ Then
 \begin{eqnarray}
& & \|S(y)_j - S(z)_j\|_\alpha =
\| u^*(\tau_j(y_j), y) -
u^*(\tau_j(z_j), z)\|_\alpha  \nonumber\\
& & \le \| u^*(\tilde\tau_j^1, y) - u^*(\tilde\tau_j^1, z)\|_\alpha +
\|u^*(\tilde\tau_j^1, z) - u^*(\tilde\tau_j^2, z)\|_\alpha, \label{rizn1}
\end{eqnarray}

\bigskip

 Denote $$\mathcal{J} = \cup_j \mathcal{J}_j, \ \
\mathcal{J}_j = (\max\{\tilde\tau_{j-1}^1,\tilde\tau_{j-1}^2\}, \min\{\tilde\tau_j^1,\tilde\tau_j^2\}] = (\tau''_{j-1},\tau'_j].$$

Denote also $\xi_j = (\tau'_j + \tau''_{j-1})/2, \ j \in Z.$

To estimate the difference $\|u^*(\tilde\tau_j^1, y) - u^*(\tilde\tau_j^1, z)\|_\alpha,$ we apply iteration on $n.$
Put $u_{0}(t,y) = u_{0}(t,z) = 0.$ Then for $t \in (\tau''_i, \tau'_{i+1}]$ we get
\begin{eqnarray}
& & \|u_{1}(t,y) - u_{1}(t,z)\|_\alpha
 =  \| \sum_{k \le i}  A^{\alpha}U(t,\tilde\tau_k^1+0,y)g_k(y_k) -
\sum_{k \le i}  A^{\alpha}U(t,\tilde\tau_k^2+0,z)g_k(z_k) \|  \nonumber\\
& & + I_0 \le I_0 + \sum_{k \le i} \| A^{\alpha} U(t,\tilde\tau_k^1+0,y)\left( g_k(y_k) -  g_k(z_k)\right)\| +
\| A^{\alpha}\Bigl( U(t,\tilde\tau_i^1+0,y)  \nonumber\\
& & -  U(t,\tilde\tau_i^2+0,z)\Bigl) g_i(z_i)\|
 + \sum_{k < i} \| A^{\alpha}\left( U(t,\tilde\tau_k^1+0,y) -  U(t,\tilde\tau_k^2+0,z)\right) g_k(z_k)\|  \nonumber\\
& & \le I_0 + \sum_{k \le i} M_1 e^{-\beta_1|t - \tilde\tau_k^1|}N_1\|y_k-z_k\|_\alpha +
\|A^\alpha e^{-A(t - \tau''_i)}(e^{-A(\tau''_i -\tau'_i)} - I)g_i(z_i)\|  \nonumber \\
& & + \sum_{k < i} \|(U(t,\xi_i,y)(U(\xi_i, \xi_{k+1},y) U(\xi_{k+1}, \tilde \tau_k^1+0,y)  \nonumber \\
& & - U(t,\xi_i,z)(U(\xi_i, \xi_{k+1},z) U(\xi_{k+1}, \tilde \tau_k^2+0,z))g_k(z_k)\|_\alpha  \nonumber\\
& & \le I_0 +  \frac{M_1 N_1}{1 - e^{-\beta_1 \theta}}\|y-z\|_S +  C_0  C_{1} (t - \tau''_i)^{-\alpha}
|\tau''_i - \tau'_i|\|g_i(z_i)\|_{1}  \nonumber \\
& & + \sum_{k < i} \|A^\alpha (U(t,\xi_i,y) - U(t,\xi_i,z))U(\xi_i, \xi_{k+1},y) U(\xi_{k+1}, \tilde \tau_k^1+0,y)g_k(z_k)\|  \nonumber\\
& & + \sum_{k < i} \|A^\alpha U(t,\xi_i,z)(U(\xi_i, \xi_{k+1},y) - U(\xi_i, \xi_{k+1},z))U(\xi_k, \tilde \tau_k^1+0,y)g_k(z_k)\|  \nonumber\\
& & + \sum_{k < i} \|A^\alpha U(t,\xi_{k+1},z) (U(\xi_{k+1}, \tilde \tau_k^1+0,y) - U(\xi_{k+1}, \tilde \tau_k^2+0,z))g_k(z_k)\|,
 \label{rizn1a}
\end{eqnarray}
where
$$I_0 = \int_{-\infty}^t \| A^\alpha ( U(t,s,y) - U(t,s,z))f(s,0)\| ds.$$

To evaluate the difference $U(\xi_i, \xi_{k+1},y) - U(\xi_i, \xi_{k+1},z)$ we construct two
sequences of bounded operators $X^\alpha \to X^\alpha$ defined by
$$T_n = U(\xi_{n+1}, \xi_n,y), \ \tilde T_n = U(\xi_{n+1}, \xi_n,z), \ n \in {Z}.$$
The corresponding difference equations
$u_{n+1} = T_n u_n$ and $u_{n+1} = \tilde T_n u_n$
are exponentially stable. Their evolution operators
$$T_{n,m} = T_{n-1}...T_m, \ n \ge m, \ T_{m,m}= I,$$
and
$$\tilde T_{n,m} = \tilde T_{n-1}...\tilde T_m, \ n \ge m, \ \tilde T_{m,m}= I,$$
 satisfies equality
\begin{eqnarray*} 
\tilde T_{n,m} - T_{n,m} = \sum_{k < n} T_{n,k+1}(\tilde T_k - T_k) \tilde T_{k,m}, \ n \ge m.
\end{eqnarray*}

Analogous to (\ref{dd3}) and (\ref{dd4}), we obtain
\begin{eqnarray} \label{exp-2d}
\|\tilde T_{n,m} - T_{n,m}\|_{\alpha} \le M_2 e^{-\beta_2 \theta(n - m)} \sup_k \|\tilde T_k - T_k\|_\alpha, \ n \ge m,
\end{eqnarray}
with some $\beta_2 \le \beta_1, M_2 \ge M_1.$

Now we estimate the difference $\|\tilde T_n - T_n\|_\alpha:$
\begin{eqnarray*}
& & \| T_n - \tilde T_n\|_\alpha  = \|U(\xi_{n+1},\xi_n,y) - U(\xi_{n+1},\xi_n,z)\|_\alpha    \nonumber\\
& & = \|e^{-A(\xi_{n+1} - \tilde\tau_n^1)}(I + B_n)e^{-A(\tilde\tau_n^1 - \xi_{n})} -
 e^{-A(\xi_{n+1} - \tilde\tau_n^2)}(I + B_n)e^{-A(\tilde\tau_n^2 - \xi_{n})}\|_\alpha   \nonumber\\\
 & & \le \|A^\alpha(e^{-A(\xi_{n+1} - \tilde\tau_n^1)} - e^{-A(\xi_{n+1} - \tilde\tau_n^2)})(I + B_n)e^{-A(\tilde\tau_n^1 - \xi_{n})}\|  \nonumber\\
& & + \| A^\alpha e^{-A(\xi_{n+1} - \tilde\tau_n^2)}(I + B_n)(e^{-A(\tilde\tau_n^1 - \xi_{n})} - e^{-A(\tilde\tau_n^2 - \xi_{n})}) \|  \nonumber\\
& & \le 2C_\alpha C_1 (\theta/2)^{-1-\alpha}(1 + b) |\tilde\tau_n^1 - \tilde\tau_n^2|. 
\end{eqnarray*}
Therefore,
\begin{eqnarray} \label{exp-dd}
& & \|(\tilde T_{n,m} - T_{n,m})u\|_{\alpha}  = \| (U(\xi_n,\xi_m,y) - U(\xi_n,\xi_m,z))u\|_\alpha  \nonumber\\
& & \le 2 M_2 e^{-\beta_2 \theta(n - m)}  C_\alpha C_1 (\theta/2)^{-1-\alpha}(1+b) \sup_j |\tilde\tau_i^1 - \tilde\tau_i^2| \|u\|_\alpha, \ n \ge m.
\end{eqnarray}

To finish the estimation of  (\ref{rizn1a}), we consider the following two differences
\begin{eqnarray}
 & & \|( U(t, \xi_i,y) - U(t, \xi_i,z)))u\|_\alpha \le \| A^\alpha(e^{-A(t - \tau'_i)}(I+ B_i) e^{-A(\tau'_i - \xi_i)}  \nonumber\\
 & &  -  e^{-A(t - \tau''_i)}(I+ B_i) e^{-A(\tau''_i - \xi_i)})u\| \le
 \frac{4 C_0 C_\alpha C_{1-\alpha}(1+b)}{\theta(t - \tau''_i)^\alpha}|\tau''_i - \tau'_i| \|u\|_\alpha. \label{exp-dd0} \\
& & \|(U(\xi_k, \tilde \tau_k^1+0,y) - U(\xi_k, \tilde \tau_k^2+0,z))u\|_\alpha =
 \|A^\alpha(I - e^{-A( \tau''_k - \tau'_k)})e^{-A(\xi_{k+1} - \tau''_k)}u\| \nonumber\\
 & & \le C_0 C_1 (2/\theta)|\tau''_i - \tau'_i|\|u\|_\alpha. \label{exp-ddd}
\end{eqnarray}
Taking into account (\ref{exp-2d}), (\ref{exp-dd0}) and (\ref{exp-ddd}), by (\ref{rizn1a}) we obtain
for $t \in (\tau''_i, \tau'_{i+1}]$
\begin{eqnarray}
 \|u_{1}(t,y) - u_{1}(t,z)\|_\alpha
 \le N_1 \|y-z\|_S \left(K'_1 + K''_1 (t-\tau''_i)^{-\alpha}\right) + I_0,
 \label{rizn1aa}
\end{eqnarray}
where the positive constants $K'_1$ and $K''_1$ don't depend on $i.$

Now we consider the $(n+1)$st iteration
\begin{eqnarray}
& & \|u_{n+1}(t,y) - u_{n+1}(t,z)\|_\alpha  \nonumber\\
& & =  \| \int_{-\infty}^t A^{\alpha}U(t,\tau,y) f(\tau, u_n(\tau,y))d\tau +
\sum_{k \le i}  A^{\alpha}U(t,\tilde\tau_k^1+0,y)g_k(y_k)  \nonumber\\
& & - \int_{-\infty}^t A^{\alpha}U(t,\tau,z) f(\tau, u_n(\tau,z))d\tau -
\sum_{k \le i}  A^{\alpha}U(t,\tilde\tau_k^2+0,z)g_k(z_k) \|  \nonumber\\
 & & \le \int_{-\infty}^{t} \| A^\alpha U(t,\tau,y)\left( f(\tau, u_n(\tau,y)) -
f(\tau, u_n(\tau,z))\right)\| d\tau  \nonumber\\
& & + \int_{-\infty}^{t}\| A^\alpha (U(t,\tau,y) - U(t,\tau,z))f(\tau, u_n(\tau,z))\|d\tau  \nonumber\\
& & + \sum_{k \le i} \| A^{\alpha} U(t,\tilde\tau_k^1+0,y)\left( g_k(y_k) - g_k(z_k)\right)\|  \nonumber\\
& & + \sum_{k \le i} \| A^{\alpha}\left(  U(t,\tilde\tau_k^1+0,y) -  U(t,\tilde\tau_k^2+0,z)\right) g_k(z_k)\|. \label{in4}
\end{eqnarray}

Similar to (\ref{in30}), we get
\begin{eqnarray*}
& & \int_{\tau''_i}^t \|A^\alpha e^{-A(t-s)}\left( f(\tau, u_n(\tau,y)) -
f(\tau, u_n(\tau,z))\right)\| d\tau  \nonumber\\
& &  + \sum_{k < i} \int_{\tau''_k}^{\tau'_{k+1}} \| A^\alpha U(t,\tau,y)\left( f(\tau, u_n(\tau,y)) -
f(\tau, u_n(\tau,z))\right)\| d\tau  \nonumber\\
& & \le \frac{M_1}{1 - e^{-\theta\beta_1}} \frac{C_\alpha \Theta^{1-\alpha}}{1-\alpha}
N_1 \sup_{\tau \in \mathcal{J}} \| u_n(\tau,y) -  u_n(\tau,z)\|, \nonumber\\[2mm]
& &  \sum_{k \le i} \| A^{\alpha} U(t,\tilde\tau_k^1+0,y)\left( g_k(y_k) - g_k(z_k)\right)\| \le
 \frac{M_1}{1 - e^{-\theta\beta_1}} N_1 \|y-z\|_\alpha.
 \end{eqnarray*}

 If $\|u_n(\tau,y)\|_\alpha \le \rho$ and $\|u_n(\tau,z)\|_\alpha \le \rho,$ then for $t \in (\tau''_i, \tau'_{i+1}]$
\begin{eqnarray}
& & \sum_{k \le i} \int_{\tau'_k}^{\tau''_k} \| A^\alpha U(t,s, y)\left( f(s, u_n(s,y)) -
f(s, u_n(s,z))\right)\| ds  \nonumber\\
& & \le \sum_{k \le i} \int_{\tau'_k}^{\tau''_{k}} \|  U(t,s, y) f(s, u_n(s,y))\|_\alpha ds +
\sum_{k \le i} \int_{\tau'_k}^{\tau''_{k}} \|  U(t,s,y) f(s, u_n(s,z))\|_\alpha ds  \nonumber\\
& & \le 2\sum_{k < i} M_1 e^{-\beta_1 |t -  \tau''_{k}|}(M_0 + N_1\rho) +
2\int_{\tau'_i}^{\tau''_{i}} \| A^\alpha U(t,s,y)\|(M_0 + N_1\rho) ds   \nonumber\\
& & \le \left(\frac{2 M_1}{1 - e^{-\beta_1 \theta}} + \frac{2 M_1}{1-\alpha}(t - \tau''_i)^{-\alpha}\right)(M_0 + N_1\rho)N_1 \|y-z\|_S,
\label{rizn1b}
\end{eqnarray}
since for $t > \tau_2 > \tau_1$
$$\int_{\tau_1}^{\tau_2}\frac{ds}{(t-s)^\alpha} \le \frac{\tau_2 - \tau_1}{(1-\alpha)((t-\tau_2)^\alpha}.$$

The second integral in (\ref{in4}) satisfies the following inequality:
\begin{eqnarray}
& & I_2 = \int_{-\infty}^{t}\| A^\alpha (U(t,s,y) - U(t,s,z))f(s, u_n(s,z))\|ds
 \le \int_{\tau''_i}^t \|A^\alpha (e^{-A(t-s)} \nonumber\\
 & & - e^{-A(t-s)})f(s, u_n(s,z))\| ds +
 \int_{\tau'_i}^{\tau''_i} \|A^\alpha(U(t,s,y) - U(t,s,z))f(s, u_n(s,z))\| ds  \nonumber\\
& & + \int_{\xi_i}^{\tau'_i} \| A^\alpha (U(t,s,y) - U(t,s,z))f(s, u_n(s,z))\| ds  \nonumber\\
& & + \sum_{k< i} \int_{\xi_{k}}^{\xi_{k+1}} \| A^\alpha (U(t,s,y) - U(t,s,z))f(s, u_n(s,z))\| ds. \label{in5}
\end{eqnarray}

We consider all integrals in (\ref{in5}) separately.
\begin{eqnarray*}
& & I_{21} = \int_{\tau'_i}^{\tau''_i} \|A^\alpha U(t,s,y)f(s, u_n(s,z))\| ds \le
\frac{C_\alpha (1+b) (M_0 + N_1 \rho)}
 {(1-\alpha)(t - \tau''_i)^{\alpha}} |\tau''_i - \tau'_i|, \nonumber\\
& & I_{22} = \int_{\tau'_i}^{\tau''_i} \|A^\alpha U(t,s,z)f(s, u_n(s,z))\| ds
 \le \frac{C_\alpha (1+b) (M_0 + N_1 \rho)}
 {(1-\alpha)(t - \tau''_i)^{\alpha}} |\tau''_i - \tau'_i|, \\
& & I_{23} = \int_{\xi_i}^{\tau'_i} \| A^\alpha (U(t,s,y) - U(t,s,z))f(s, u_n(s,z))\| ds  \nonumber\\
& & = \int_{\xi_i}^{\tau'_i} \| A^\alpha (U(t,\tilde\tau_i^1,y)U(\tilde\tau_i^1,s,y) -
 U(t,\tilde\tau_i^2,z)U(\tilde\tau_i^2,s,z))f(s, u_n(s,z))\| ds   \nonumber\\
& & \le \int_{\xi_i}^{\tau'_i} \| A^\alpha \Bigl((e^{-A(t - \tilde\tau_i^1)} - e^{-A(t - \tilde\tau_i^2)})(I+B_i)
e^{-A(\tilde\tau_i^1 - s)}  \nonumber\\
& & - A^\alpha e^{-A(t - \tilde\tau_i^2)})(I+B_i)(e^{-A(\tilde\tau_i^1 - s)} - e^{-A(\tilde\tau_i^2 - s)})\Bigl)f(s, u_n(s,z))\| ds   \nonumber\\
& & \le \frac{2 C_0 C_{\alpha_1} C_{1+\alpha-\alpha_1}}{(t -\tau''_i)^{\alpha_1}}(1+b) \frac{(\tau'_i - \xi_i)^{\alpha_1-\alpha}}{\alpha_1-\alpha}
|\tau''_i - \tau'_i|, \quad \alpha_1 > \alpha.
\end{eqnarray*}

The last sum in (\ref{in5}) is transformed as follows:
\begin{eqnarray*}
& & I_{24} = \sum_{k < i} \int_{\xi_{k}}^{\xi_{k+1}} \| A^\alpha (U(t,s,y) - U(t,s,z))f(s, u_n(s,z))\| ds  \\
& & = \sum_{k < i} \int_{\xi_{k}}^{\xi_{k+1}} \|(U(t, \xi_i,y)U(\xi_i, \xi_{k+1},y)U(\xi_{k+1},s,y)  \\
& & - U(t, \xi_i,z) U(\xi_i, \xi_{k+1},z) U(\xi_{k+1},s,z))f(s, u_n(s,z))\|_{\alpha} ds  \\
& & \le \sum_{k < i} \int_{\xi_{k}}^{\xi_{k+1}} \Biggl(\|(  U(t, \xi_i,y) -
U(t, \xi_i,z)) U(\xi_i, \xi_{k+1},y)U(\xi_{k+1},s,y) f(s, u_n(s,z))\|_\alpha  \\
& & + \|U(t, \xi_i,z)( U(\xi_i, \xi_{k+1},y) - U(\xi_i, \xi_{k+1},z))
U(\xi_{k+1},s,y) f(s, u_n(s,z))\|_\alpha  \\
& & + \|U(t, \xi_i,z)U(\xi_i, \xi_{k+1},z)
(U(\xi_{k+1},s,y) - U(\xi_{k+1},s,z))f(s, u_n(s,z))\|_\alpha \Biggl)ds.
\end{eqnarray*}

To finish the estimation of integral $I_{24}$ we use (\ref{exp-dd}), (\ref{exp-dd0}), (\ref{exp-ddd}) and
\begin{eqnarray*}
& &  \int_{\xi_{k}}^{\xi_{k+1}} \|A^\alpha (U(\xi_{k+1},s,y) - U(\xi_{k+1},s,z))f\|_\alpha ds  \\
& & \le \int_{\tau''_{k}}^{\xi_{k+1}}  \|A^\alpha ( e^{-A(\xi_{k+1} - s)} - e^{-A(\xi_{k+1} - s)})f\| ds  \\
& & + \int_{\tau'_{k}}^{\tau''_{k}}\|A^\alpha( e^{-A(\xi_{k+1} - \tau''_{k})}(I + B_{k})
e^{-A(\tau''_{k} - s)} - e^{-A(\xi_{k+1} - s)})f\| ds  \\
& & + \int_{\xi_{k}}^{\tau'_{k}} \| ( e^{-A(\xi_{k+1} - \tau'_{k})}(I + B_{k})
e^{-A(\tau'_{k} - s)} -
 e^{-A(\xi_{k+1} - \tau''_{k})}(I + B_{k}) e^{-A(\tau''_{k} - s)}) f\|_\alpha ds  \\
& & \le \tilde K (\xi_{k+1} - \tau''_k)^{-\alpha_1} (1+b)|\tau''_{k} -\tau'_{k}| \|f\|
\end{eqnarray*}
with some positive constant $\tilde K$ and $\alpha_1 > \alpha.$ Therefore,
\begin{eqnarray} \label{rizn2}
 I_{2} \le N_1 \left( K'_2 + \frac{ K''_2}{(t- \tau''_i)^{\alpha_1}}\right)\|y-z\|_S
\end{eqnarray}
with $\alpha_1 > \alpha$ and positive constants $K'_2$ and $K''_2$ independent on $i, k.$

By (\ref{rizn1aa}), (\ref{in5}), and (\ref{rizn2}) we obtain for $t \in (\tau''_i, \tau'_{i+1}]$
\begin{eqnarray}
& & \|u_{n+1}(t,y) - u_{n+1}(t,z)\|_\alpha \le
 \sum_{k < i} \int_{\tau''_k}^{\tau'_{k+1}} \| A^\alpha U(t,\tau,y)( f(\tau, u_n(\tau,y))  \nonumber\\
& & -f(\tau, u_n(\tau,z)))\| d\tau +
\int_{\tau''_i}^{t}\| A^\alpha U(t,\tau,y)( f(\tau, u_n(\tau,y))  \nonumber\\
& & - f(\tau, u_n(\tau,z)))\| d\tau +
\left(K'_3 + \frac{K''_3}{(t - \tau''_i)^{\alpha_1}}\right)N_1 \|y-z\|_S,
 \label{rizn2a}
\end{eqnarray}
where the constants $K'_3$ and $K''_3$ don't depend on $n.$

Let the $n$th iteration satisfies the inequality
$$\|u_{n}(t,y) - u_{n}(t,z)\|_\alpha \le \left(L'_n + \frac{L''_n}{(t -  \tau''_i)^{\alpha_1}}\right)N_1 \|y-z\|_S,
\ t \in (\tau''_i, \tau'_{i+1}],$$
with positive constants $L'_n$ and $L''_n.$ We estimate the $(n+1)$st iteration.
\begin{eqnarray}
& & \|u_{n+1}(t,y) - u_{n+1}(t,z)\|_\alpha \le \left(K'_3 + \frac{K''_3}{(t -  \tau''_i)^{\alpha_1}}\right)N_1 \|y-z\|_S  \nonumber\\
& & + N_1^2\|y-z\|_S \sum_{k < i} \int_{\tau''_k}^{\tau'_{k+1}}\|A^\alpha U(t,s)\| \left( L'_n + \frac{L''_n}{(s - \tau''_k)^{\alpha_1}}\right)ds
\nonumber\\
& & + N_1^2\|y-z\|_S \int_{\tau''_i}^{t}\|A^\alpha U(t,s)\| \left( L'_n + \frac{L''_n}{(s - \tau''_i)^{\alpha_1}}
\right)ds  \nonumber\\
& & \le  N_1^2\|y-z\|_S \Biggl(\sum_{k < i} \int_{\tau''_k}^{\tau'_{k+1}}M_1 e^{-\beta_1|t-s|} \left( L'_n +
\frac{L''_n}{(s - \tau''_k)^{\alpha_1}}\right)ds  \nonumber\\
& & + \int_{\tau''_{i}}^t \frac{M_1}{(t-s)^{\alpha_1}} \left( L'_n + \frac{L''_n}{(s - \tau''_i)^{\alpha_1}}\right)ds \Biggl)
 + \left(K'_3 + \frac{K''_3}{(t - \tau''_i)^{\alpha_1}}\right)N_1 \|y-z\|_S \nonumber\\[2mm]
& & \le \Biggl( \frac{M_1}{1 - e^{-\beta_1\theta}}\left( L'_n Q + \frac{L''_n Q^{1-\alpha_1}}{1-\alpha_1}\right) +
\frac{L''_n M_1 2^{2\alpha}}{1-\alpha_1}(t - \tau''_i)^{1-2\alpha_1}  \nonumber\\
& &  +\frac{L'_n M_1}{1-\alpha_1}(t - \tau''_i)^{1-\alpha_1}\Biggl)N_1^2\|y-z\|_S  +
 \left(K'_3 + \frac{K''_3}{(t - \tau''_i)^{\alpha_1}}\right)N_1 \|y-z\|_S  \nonumber\\
 & & = \left(L'_{n+1} + \frac{L''_{n+1}}{(t - \tau''_i)^{\alpha_1}}\right)N_1 \|y-z\|_S. \label{rizn3}
\end{eqnarray}

One can verify that for sufficiently small $N_1$ the sequences $L'_{n}$ and $L''_{n}$ are uniformly bounded by some constants
$L'_{*}$ and $L''_{*}.$

Since the sequences $u_{n}(t,y)$ and $u_{n}(t,z)$ tend to limit functions $u_*(t,y)$  and $u_*(t,z)$, respectively,
we conclude by (\ref{rizn3}) for $t \in (\tau''_i, \tau'_{i+1}]$ that
\begin{eqnarray*}
 \|u_{*}(t,y) - u_{*}(t,z)\|_\alpha \le \left(L'_{*} + \frac{L''_{*}}{(t - \tau''_i)^{\alpha_1}}\right)N_1 \|y-z\|_S
\end{eqnarray*}
and
\begin{eqnarray}
\|u^*(\tau'_{i+1}, y) - u^*(\tau'_{i+1}, z)\|_\alpha \le \left( L'_{*} + \frac{L''_{*}}{\theta^{\alpha_1}}\right)N_1\|y-z\|_S.
\label{rizn9}
\end{eqnarray}

\vspace{2mm}

Now we estimate the second summand in (\ref{rizn1}). Note that by our assumption
$\tilde\tau_j^1 < \tilde\tau_j^2.$
\begin{eqnarray*}
\|u^*(\tilde\tau_j^1, z) - u^*(\tilde\tau_j^2, z)\|_\alpha =
\Bigl\|\int_{\tilde\tau_j^1}^{\tilde\tau_j^2} \frac{d}{ds}u^*(s, z) ds \Bigl\|_\alpha.
\end{eqnarray*}

By Theorem 3.5.2, \cite{H}, at the interval $(\tau''_{j-1},\tau'_j)$ the derivative satisfies
$$\Bigl\|\frac{d}{ds}u^*(s, z)\Bigl\|_\gamma \le \tilde K_1 (s - \tau''_{j-1})^{\alpha-\gamma-1}$$
with some positive const $\tilde K_1 $ independent of $j$ and an initial value from $U_\rho^\alpha.$

Then for $t \in (\tilde\tau_j^1,\tilde\tau_j^2)$
$$\Bigl \|\frac{d}{ds}u^*(s, z)\Bigl\|_\gamma \le \tilde K_1 \left(\frac{\theta}{2}\right)^{\alpha-\gamma-1} = \tilde K_2$$
and
\begin{eqnarray}
\|u^*(\tilde\tau_j^1, z) - u^*(\tilde\tau_j^2, z)\|_\alpha \le  \tilde K_2 |\tilde\tau_{j}^1 - \tilde\tau_j^2| \le
\tilde K_2 N_1 \|y-z\|_S.
\label{rizn10}
\end{eqnarray}

By (\ref{rizn9}) and (\ref{rizn10}) we have
\begin{eqnarray}
& & \|u^*(\tilde\tau_j^1, z) - u^*(\tilde\tau_j^2, z)\|_\alpha = \Gamma_9 \|y - z\|_S,
\label{rizn11}
\end{eqnarray}
where $\Gamma_9 < 1$ uniformly for $j$ and $y, z \in \mathcal{N}_{\varrho}.$

By (\ref{rizn1}), (\ref{rizn9}) and (\ref{rizn11}) we conclude that the map $S: \ \mathcal{N}_{\varrho} \to \mathcal{N}_{\varrho}$ is a contraction.
Therefore, there exists unique almost periodic sequence $y^* = \{y_j^*\}$ such that
$u^*(\tau_j( y_j^*), y^*) =  y_j^*$
for all $j \in {Z}.$ The function $u^*(t, y^*)$ is
 W-almost periodic solution of the equation (\ref{ban1w}), (\ref{ban2w}).

\vspace{2mm}

2. Now we  proof the stability of the almost periodic solution.
Fix arbitrary $\varepsilon > 0$ and $\eta > 0.$ Let $t_0 \in [\tau_0(0) + \eta, \tau_1(0) - \eta].$

 The W-almost periodic solution $u_0(t)$ satisfies the integral equation
 \begin{eqnarray*} 
  u_0(t) = U_0(t,t_0)u_0 + \int_{t_0}^t U_0(t,s)f(s, u_0(s)) ds + \sum_{t_0 < \tau_j^0 < t} U_0(t, \tau_j^0+0) g_j(\tau_j^0),
 \end{eqnarray*}
where $\tau_j^0 = \tau_j(u_0(\tau_j^0))$ and $U_0(t,s)$ is the evolution operator of the linear equation
 \begin{eqnarray*}
 \frac{du}{dt} + Au = 0, \quad u(\tau_j^0 + 0) - u(\tau_j^0) = B_j u(\tau_j^0), \ j = 1,2,....
  \end{eqnarray*}
Let $u_1 \in X^\alpha$ such that $\| u_0 - u_1\|_\alpha < \delta.$ The solution $u_1(t)$ with initial value $u_1(t_0) = u_1$ satisfies equation
 \begin{eqnarray*} 
  u_1(t) = U_1(t,t_0)u_1 + \int_{t_0}^t U_1(t,s)f(s, u_1(s)) ds + \sum_{t_0 < \tau_j^1 < t} U_1(t, \tau_j^1+0) g_j(\tau_j^1),
 \end{eqnarray*}
where $\tau_j^1 = \tau_j(u_1(\tau_j^1))$ and $U_1(t,s)$ is the evolution operator of the linear equation
 \begin{eqnarray*}
 \frac{du}{dt} + Au = 0, \quad u(\tau_j^1 + 0) - u(\tau_j^1) = B_j u(\tau_j^1), \ j = 1,2,....
  \end{eqnarray*}

 By Lemma \ref{lem4}, for a sufficiently small Lipschitz constant $N_1$ the evolution operator $U_0(t,s)$
 satisfies the inequality
  \begin{eqnarray} \label{1st}
 \|U_0(t,s)u\|_\alpha \le M_1 e^{-\beta_1(t - s)}\|u\|_\alpha, \ t \ge s,
  \end{eqnarray}
 with some positive constants $\beta_1 \le \beta, \ M_1 \ge M.$
 Moreover, one can verify that for some domain $U^\alpha_{\tilde\rho}, \tilde\rho \le \rho,$
 and $N_1 \le N_0$ the evolution operator $U_1$ satisfies
   \begin{eqnarray} \label{2st}
 \|U_1(t,s)u\|_\alpha \le M_1 e^{-\beta_1(t - s)}\|u\|_\alpha, \ t \ge s, \ t,s \in [t_0, t_0 + T],
  \end{eqnarray}
 if the values $u_1(t)$ belong to $U^\alpha_{\tilde\rho}$ for $t \in [t_0, t_0 + T]$.

At the interval without impulses, the difference of solutions $u_0(t) - u_1(t)$ satisfies the inequality
 \begin{eqnarray*}
 & &  \|u_1(t) - u_0(t) \|_\alpha \le \|e^{-A(t-t_1)}(u_0(t_1) - u_1(t_1))\|_\alpha  \\
 & & + \int_{t_1}^t \|A^\alpha e^{-A(t-t_1)}(f(s,u_1(s)) - f(s,u_0(s)))\|ds \\
 & & \le M_1 e^{-\beta_1(t - t_1)}\|u_0(t_1) - u_1(t_1)\|_\alpha + \int_{t_1}^t \frac{M_1 N_1 e^{-\beta_1(t - s)}}{(t-s)^\alpha}\|u_1(s)-u_0(s)\|_\alpha ds.
 \end{eqnarray*}
Then by Lemma \ref{lem-gr},
\begin{eqnarray} \label{st22}
 \|u_1(t) - u_0(t) \|_\alpha \le M_1 \tilde C  e^{-\beta_1(t - t_1)} \|u_1(t_1) - u_0(t_1)\|_\alpha, \ t - t_1 \le Q.
 \end{eqnarray}
Hence, if initial values belong to the bounded domain from $X^\alpha,$ then the corresponding solutions
are uniformly bounded for $t$ from the bounded interval.

Assume for definiteness that $\tau_j^0 \ge \tau_j^1$ and estimate $|\tau_j^1 - \tau_j^0|$ by  $(u_1(\tau_j^1) - u_0(\tau_j^1)).$ By (\ref{rizn10})
 \begin{eqnarray*}
 & & \|(u_1(\tau_j^1) - u_0(\tau_j^0)\|_\alpha \le \|(u_0(\tau_j^1) - u_0(\tau_j^0)\|_\alpha + \|u_0(\tau_j^1) - u_1(\tau_j^1)\|_\alpha \\
 & & \le \Bigl\|\int_{\tau_j^1}^{\tau_j^0} \frac{d}{d\xi}u_0(\xi)d\xi\Bigl\|_\alpha + \|u_0(\tau_j^1) - u_1(\tau_j^1)\|_\alpha \\
  & & \le \tilde K_2 |\tau_j^0 - \tau_j^1| + \|u_0(\tau_j^1) - u_1(\tau_j^1)\|_\alpha.
 \end{eqnarray*}
 Hence,
  \begin{eqnarray}
  |\tau_j^0 - \tau_j^1| \le N_1 \|u_0(\tau_j^0) - u_1(\tau_j^1)\|_\alpha \le
\frac{N_1}{1 - \tilde K_2 N_1} \|u_0(\tau_j^1) - u_1(\tau_j^1)\|_\alpha.  \label{st5}
 \end{eqnarray}

Denote $\tau'_j = \min\{\tau_j^0, \tau_j^1\}, \ \tau''_j = \max\{\tau_j^0, \tau_j^1\}, \ j = 1,2,....$
We assume that $t \in (\tau''_i,\tau'_{i+1}]$ and estimate the difference
 \begin{eqnarray}
 & &  \|u_0(t) -  u_1(t)\|_\alpha = \| U_0(t, t_0)(u_0 - u_1)\|_\alpha +
 \|( U_0(t, t_0) -  U_1(t, t_0))u_1\|_\alpha \nonumber\\
 & & + \int_{t_0}^t \|U_0(t,s)f(s, u_0(s)) - U_1(t,s)f(s, u_1(s))\|_\alpha ds  \nonumber\\
& & + \|\sum_{t_0 < \tau_j^1 < t} U(t, \tau_j^1+0) g_j(\tau_j^1)
 - \sum_{t_0 < \tau_j^0 < t} U(t, \tau_j^0+0) g_j(\tau_j^0)\|_\alpha \nonumber\\
 & &  \le \| U_0(t, t_0)(u_0 - u_1)\|_\alpha + \|( U_0(t, t_0) -  U_1(t, t_0))u_1\|_\alpha \nonumber\\
  & & + \int_{t_0}^{\tau'_1} \|U_0(t,s)f(s, u_0(s)) - U_1(t,s)f(s, u_1(s))\|_\alpha ds + \nonumber\\
 & & + \sum_{j=1}^{i-1} \int_{\tau''_j}^{\tau'_{j+1}} \|(U_0(t,s) - U_1(t,s))f(s, u_1(s))\|_\alpha ds \nonumber\\
 & & + \sum_{j=1}^{i-1} \int_{\tau''_j}^{\tau'_{j+1}} \|U_0(t,s)(f(s, u_0(s)) - f(s, u_1(s)))\|_\alpha ds  \nonumber\\
& & + \sum_{j=1}^{i} \int_{\tau'_j}^{\tau''_j} \|U_0(t,s)f(s, u_0(s)) - U_1(t,s)f(s, u_1(s))\|_\alpha ds \nonumber\\
& & + \int_{\tau''_i}^t \|U_0(t,s)f(s, u_0(s)) - U_1(t,s)f(s, u_1(s))\|_\alpha ds \nonumber\\
& & + \sum_{j=1}^{i} \| U_0(t, \tau_j^0+0) g_j(\tau_j^0) -  U_1(t, \tau_j^1+0) g_j(\tau_j^1)\|_\alpha. \label{st6}
 \end{eqnarray}

Denote $v(t) =  \|u_0(t) -  u_1(t)\|_\alpha.$ Assume that for $t \in [t_0, \tau'_i]$ the values $u(t)$ belong to
$U^\alpha_{\tilde\rho};$ hence, the evolution operators $U_0(t,\tau)$ and $U_1(t,\tau)$ satisfy (\ref{1st}) and (\ref{2st})
at this interval.
By (\ref{st6}), analogous to the proof of (\ref{rizn1aa}), (\ref{in5}), and (\ref{rizn2}), we conclude that there exist positive constants $M_2$
and $P_1$ independent of $i$ such that
 for $t \in \mathcal{J}_{i+1}$
 \begin{eqnarray}
  & &  v(t) \le M_1 e^{-\beta_1 (t - t_0)} v(t_0) +
  \int_{t_0}^{\tau'_1} M_2 N_1 e^{-\beta_1 (t - \tau''_1)}  v(s) ds  \nonumber\\
  & & + \sum_{j=2}^{i-1}  \int_{\tau''_{j-1}}^{\tau'_{j}} M_2 N_1 e^{-\beta_1 (t - \tau''_{j})} v(s) ds +
   \sum_{j=1}^{i -1} P_1 N_1 e^{-\beta_1 (t - \tau''_j)}  v(\tau'_j)  \nonumber\\
   & & + \frac{1}{(t - \tau''_i)^{\alpha_1}}\left(\int_{\tau''_{i-1}}^{\tau'_{i}} M_2 N_1 e^{-\beta_1 (t - \tau''_{i})} v(s) ds +
  P_1 N_1 e^{-\beta_1 (t - \tau''_i)}  v(\tau'_i)\right)  \nonumber \\
   & & + \int_{\tau''_i}^t M_2 N_1 e^{-\beta_1 (t - s)}(t - s)^{-\alpha_1} v(s) ds, \quad \alpha_1 > \alpha.
  \label{st66}
 \end{eqnarray}
 Denote $\tilde Q = \max_j\{1, (\tau'_{j+1} - \tau''_j)\}, \ \tilde \theta = \min_j\{1, (\tau'_{j+1} - \tau''_j)\},
 \ P_2 = P_1 e^{\beta_1 \sup_j|\tau''_j - \tau'_j|},$ \ $ M_3 = M_2 e^{\beta_1 Q}.$
 By (\ref{st22}), at the interval $[t_0, \tau'_1]$ the function
$v(t)$ satisfies
\begin{eqnarray} \label{st9}
 v(t) \le  M_1 \tilde C e^{-\beta_1 (t - t_0)} v(t_0), \  t \in [t_0, \tau'_1].
  \end{eqnarray}
By (\ref{st66}) and (\ref{st9}), for $t \in (\tau''_1, \tau'_2]$ we get
 \begin{eqnarray*}
 & &  v(t) \le M_1 e^{-\beta_1 (t - t_0)} v(t_0) + \frac{1}{(t - \tau''_1)^{\alpha_1}}\int_{t_0}^{\tau'_1} M_2 N_1 e^{-\beta_1 (t - \tau''_1)} v(s) ds  \\
  & &  +  P_1 N_1 e^{-\beta_1 (t - \tau''_1)} (t - \tau''_1)^{-{\alpha_1}} v(\tau'_1) +
  \int_{\tau''_1}^t M_2 N_1 e^{-\beta_1 (t - s)}(t - s)^{-\alpha_1} v(s) ds.
 \end{eqnarray*}
 Hence, for $v_1(t) = e^{\beta_1 t} v(t)$
  \begin{eqnarray*}
   v_1(t) \le M_1 v_1(t_0) \left( 1 + \frac{N_1 \tilde C (M_3 \tilde Q + P_2)}{(t - \tau''_1)^{\alpha_1}}\right) +
    \int_{\tau''_1}^t M_2 N_1(t - s)^{-\alpha_1} v_1(s) ds.
  \end{eqnarray*}
By Lemma \ref{lem-gr}
\begin{eqnarray*}  
 v(t) \le M_1 \tilde C_1  v(t_0) e^{-\beta_1 (t - t_0)}\left( 1 + \frac{N_1 \tilde C (M_3 \tilde Q + P_2)}{(t - \tau''_1)^{\alpha_1}}\right),
 \ t \in (\tau''_1, \tau'_2],
\end{eqnarray*}
where $\tilde C_1$ is defined by (\ref{evop222}).

 Let us prove that \ \ $ v(t) \le$
 \begin{eqnarray} \label{st11}
\le M_1 \tilde C_1  v(t_0) e^{-\beta_1 (t - t_0)} \left( 1 + \frac{N_1 \tilde C_1 (M_2\tilde Q + P_2)}{(t - \tau''_j)^{\alpha_1}}\right)
  \left( 1 + \frac{N_1 \tilde C_1 (M_2\tilde Q + P_2)}{(1 - \alpha_1)\tilde \theta^{\alpha_1}}\right)^{i-1}
 \end{eqnarray}
 for $t \in (\tau''_i, \tau'_{i+1}], \ i \ge 2.$ We apply the method of mathematical induction.
 Assume that (\ref{st11}) is true for $t \in (\tau''_{n-1}, \tau'_{n}]$ and prove it for $t \in (\tau''_{n}, \tau'_{n+1}].$ Really, by (\ref{st66})
 for $t \in (\tau''_{n}, \tau'_{n+1}]$ we have
 \begin{eqnarray*}
 & &  v(t) \le M_1  e^{-\beta_1 (t - t_0)} v(t_0) \Biggl( \left( 1 + (M_3 \tilde Q + P_2)N_1 \tilde C\right)  \nonumber \\
 & & + \sum_{j=2}^{n-1} {\mathcal A}^{j} M_3 N_1 \tilde Q \tilde C_1 +  \sum_{j=2}^{n-1} {\mathcal A}^{j-1}
 \left( 1 +  \frac{N_1 \tilde C_1(M_3\tilde Q + P_2)}{\tilde\theta^{\alpha_1}}\right) N_1 P_2 \tilde C_1  \nonumber \\
  & & + {\mathcal A}^{n-2} \Biggl(N_1 M_2 \tilde C_1\Bigl(\tau'_n -\tau''_{n-1}) +
  \frac{N_1 \tilde C_1(M_3\tilde Q + P_2)(\tau'_n -\tau''_{n-1})}{(1 - \alpha_1)(\tau'_n -\tau''_{n-1})^{\alpha_1}}\Bigl)  \nonumber \\
 & & +  N_1 P_2 \tilde C_1 \Bigl( 1 + \frac{N_1 \tilde C_1 (M_3\tilde Q + P_2)}{(\tau'_n -\tau''_{n-1})^{\alpha_1}}\Bigl)\Biggl) +
{\mathcal B_n(t)} \nonumber \\
    & & \le {\mathcal A} + \sum_{j=2}^{n-1}{\mathcal A}^{j-1}(1 + N_1 \tilde C_1 (M_3 \tilde Q + P_2) - 1)+
    \frac{{\mathcal A}^{n-1} N_1 \tilde C_1 (M_3 \tilde Q + P_2)}{(t - \tau''_n)^{\alpha_1}}  \nonumber \\
    & & +  {\mathcal B_n(t)} \le
    {\mathcal A}^{n-1}\left( 1 + \frac{N_1 \tilde C_1 (M_3 \tilde Q + P_2)}{(t - \tau''_n)^{\alpha_1}}\right) + {\mathcal B_n(t)},
 \end{eqnarray*}
 where
 $${\mathcal A} = \left( 1 + \frac{N_1 \tilde C_1 (M_3\tilde Q + P_2)}{(1 - \alpha_1)\tilde \theta^{\alpha_1}}\right), \
 {\mathcal B_n(t)} =
 \int_{\tau''_n}^t \frac{M_3 N_1}{(t - s)^{\alpha_1}} e^{-\beta_1 (t - s)} v(s) ds.$$

  Hence, for $t \in (t_n, t_{n+1}]
  ,$ the function $v_1(t) = e^{\beta_1 t} v(t)$ satisfies the inequality
 $$v_1(t) \le {\mathcal A}^{n-1}\left( 1 +\frac{ N_1 \tilde C_1 (M_3 \tilde Q + P_2)}{(t - \tau''_n)^{\alpha_1}}\right) +
  M_3 N_1 \int_{\tau''_n}^t (t - s)^{-\alpha_1}v_1(s)ds.$$
Applying Lemma \ref{lem-gr}, we obtain (\ref{st11}).

Let $N_1 > 0$ be such that ${\mathcal A}^{{i}(t_0,t)} e^{-\beta_1 (t-t_0)} < e^{-\delta_1(t - t_0)}$
for some positive $\delta_1.$
For given $\varepsilon > 0$ and $\eta > 0$ we choose $v(t_0) = v_0$ such that
$$M_1 \tilde C_1 v_0 \left( 1 +\frac{ N_1 \tilde C_1 (M_3 \tilde Q + P_2)}{\eta^{\alpha_1}}\right) < \varepsilon.$$
This proves the asymptotic stability of solution $u_0(t).$
\end{proof}

{\bf Example 1.}
Let us consider the parabolic equation with impulses in variable moments of time
\begin{eqnarray}
& & u_t = u_{xx} + a(t)u_x + b(t,x), \label{ex1} \\
& & \Delta u\Biggl|_{t = \tau_j(u)} = u(\tau_j(u)+0,x) -  u(\tau_j(u),x) =  -a_j  u(\tau_j(u),x), \label{ex2}
\end{eqnarray}
with boundary conditions
\begin{eqnarray} \label{ex3}
u(t,0) = u(t,\pi) = 0,
\end{eqnarray}
where the sequence of surfaces $\tau_j(u)$ is defined by
\begin{eqnarray*}  
\tau_j(u) = \theta_j +  b_j \int_0^\pi u^2(\xi)d\xi, \ j \in Z,
\end{eqnarray*}
where the sequence of real numbers $\{\theta_j\}$ has
uniformly almost periodic sequences of differences and $\theta_{j+1} - \theta_j \ge \theta \ge 1/2,$

$\{a_j\}$ and $\{b_j\}$ are almost periodic sequences of positive numbers,

$a(t)$ is a Bohr almost periodic function,

$b(t,x)$ is a Bohr almost periodic function in $t$ uniformly with respect to $x \in [0,\pi]$ and belongs to $L_2(0,\pi)$
for all fixed $t.$

Denote $$X = L_2(0,\pi), \ A = - \frac{\partial^2}{\partial x^2}, \ X^1 = D(A) = H^2(0,\pi)\cap H^1_0(0,\pi).$$

The operator $A$ is sectorial with simple eigenvalues $\lambda_k = k^2$ and corresponding eigenfunctions
$$\varphi_k(x) = \left(\frac{2}{\pi}\right)^{1/2}\sin kx, \ k = 1,2,....$$
The operator $(-A)$ generates an analytic semigroup $e^{-At}.$
Let $u = \sum_{k=1}^\infty a_k \sin k x, a_k = \frac{1}{\pi}\int_0^\pi u(x)\sin kx dx.$
Then
$$A u = \sum_{k=1}^\infty k^2 a_k \sin k x, \  A^{\alpha} u = \sum_{k=1}^\infty k^{2\alpha} a_k \sin k x, \
e^{-At} = \sum_{k=1}^{\infty}e^{-k^2 t}a_k \sin kx.$$
Hence,
$$X^{1/2} = D(A^{1/2}) = H^1_0(0,\pi).$$

Let us consider Eq. (\ref{ex1}) - (\ref{ex3}) in space $X^{1/2} = D(A^{1/2}) = H^1_0(0,\pi):$
$$\frac{du}{dt} + Au = f(t,u), \quad u(\tau_j(u) + 0) = (1 - a_j)u(\tau_j(u)), \ j \in Z,$$
where $f(t,u): R \times X^{1/2} \to X, \ f(t,u)(x) = a(t)u_x + b(t,x).$

We verify that in some domain $\mathcal{D} = \{u \ge 0, \|u\| \le \rho\}$ solutions of (\ref{ex1}) –- (\ref{ex3})
don't have beating at the surfaces $t = \tau_j(u).$
Assume to the contrary that solution $u(t)$ intersects
 the surface $t = \tau_j(u)$ at two points $t_j^1$
 and $t_j^2$, $t_j^1 < t_j^2.$

Denote $u(t_j^1) = u_1, u(t_j^2) = u_2, \tilde u = e^{-A(t_j^2 - t_j^1)}u(t_j^1+0).$
 Then $u(t_j^1+0) = (1 - a_j)u_1,$ $\tau_j(u_1)) = t_j^1,$ $ \tau_j(u_2)) = t_j^2,$
 and
$$u_2 = e^{-A(t_j^2 - t_j^1)}u(t_j^1+0) + \int_{t_j^1}^{t_j^2}  e^{-A(t_j^2 - s)}
f(s,u(s))ds.$$
We have
\begin{eqnarray*}
& & |\tau_j(u_2) -  \tau_j(\tilde u)| \le
 b_j \int_0^\pi |(u_2(t,x) - \tilde u(t,x))(u_2(t,x) + \tilde u(t,x))|dx  \nonumber\\
& & \le b_j \|u_2(t,x) - \tilde u(t,x)\|_{L_2} \|u_2(t,x) + \tilde u(t,x)\|_{L_2}  \nonumber\\
& & \le  b_j \int_{t_j^1}^{t_j^2} \| e^{-A(t_j^2 - s)}
f(s,u(s))\|_{L_2}ds \|u_2(t,x) + \tilde u(t,x)\|_{L_2}.
\end{eqnarray*}
The function $f(t,u)$ satisfies $\| f(t,u)\|_{X} \le K(1 + \|u\|_{X^{1/2}});$
hence, solutions of the equation without impulses exist for all $t \ge t_0$ and there exist
positive constants $M_1$ and $M_2$ such that
 $M_2 \ge \sup_{u \in \mathcal{D}} \|f(t, u)\|_{L_2},$ $M_3 \ge \sup_{u \in \mathcal{D}} \|u_2(t,x) + \tilde u(t,x)\|_{L_2}.$
 Therefore, $|\tau_j(u_2) -  \tau_j(\tilde u)| \le  b_j |t_j^2 - t_j^1| M_2 M_3.$
 By sufficiently small $b = \sup_j b_j$ we have $b M_2 M_3 < 1$ and
\begin{eqnarray*}
& & 0 < t_j^2 - t_j^1 = \tau_j(u_2) -  \tau_j(u_1) \le
\tau_j(u_2) -  \tau_j(\tilde u) + \tau_j(\tilde u) -  \tau_j(u_1), \\
 & &  t_j^2 - t_j^1 \le \frac{1}{1 - b M_2 M_3}(\tau_j(\tilde u) - \tau_j(u_1)) \le
 \frac{b_j((1-a_j)^2 -1)}{1 - b M_2 M_3}\|u_1\|^2_{L_2} < 0.
\end{eqnarray*}
This contradicts our assumption.

Corresponding to (\ref{ex1}) - (\ref{ex3}), the linear impulsive equation is exponentially stable in space $X^{1/2}.$
By Theorem \ref{thm4}, for sufficiently small $b = \sup_j b_j$ and $a = \sup_t |a(t)|$ the equation has
an asymptotically stable W-almost periodic solution.


\section{Equations with unbounded operators $B_j.$}
Many results in our paper remain true if operators $B_j$ in linear parts of impulsive action are unbounded.
We refer to \cite{T3}, where the following semilinear impulsive differential equation
\begin{eqnarray} \label{ban1un}
& & \frac{du}{dt} = A u + f(t, u), \quad t \not= \tau_j, \\
& & \Delta u|_{t=\tau_j} = u(\tau_j) - u(\tau_j-0) = B_j u(\tau_j-0) + g_j(u(t_j-0)), \quad j \in  Z, \label{ban2un}
\end{eqnarray}
was studied. Here $u: {R} \to X,$ $X$ is a Banach space, $A$ is a sectorial operator in $X$,
$\{B_j\}$ is a sequence of some closed operators, and $\{\tau_j\}$ is an unbounded and strictly increasing sequence of real numbers.
Assume that the equation satisfies conditions $(H1), (H3), (H5), (H6)$ and

${\bf (H4u)}$  the sequence $\{B_j\}$ of closed linear operators $B_j \in L(X^{\alpha+\gamma}, X^\alpha)$ is almost periodic
in the space $L(X^{\alpha+\gamma}, X^\alpha),$ for $\alpha \ge 0$ and some $\gamma > 0.$
\vspace{1mm}

As in \cite{RT}, we assume that solutions $u(t)$ of (\ref{ban1un}), (\ref{ban2un}) are right-hand-side continuous;
hence $u(\tau_j) = u(\tau_j + 0)$ at all points of impulsive action.
Due to such a selection we avoid considering operators $e^{-A(t-\tau_j)}(I + B_j)$ with unbounded operators $B_j$
and can work with the family of bounded operators $e^{-A(t-\tau_j)}.$

Since the operator $A$ is sectorial and operators $B_j$ are subordinate to $A$, an evolution operator of a corresponding linear
impulsive equation is constructed correctly.
Now analogs of the theorems \ref{theorem2} and \ref{theorem3} can be proved.
\vspace{2mm}

{\bf Example 2.} \cite{T3}.
We consider the following parabolic equation with impulsive action:
\begin{eqnarray}
& & u_t = u_{xx} + f(t,x), \label{2ex1} \\
& & \Delta u\Bigl|_{t = \tau_j} = u(\tau_j,x) -  u(\tau_j - 0,x) = b_k (\sin x )u_x + c_k x(\pi - x), \label{2ex2}
\end{eqnarray}
with boundary conditions
\begin{eqnarray} \label{2ex3}
u(t,0) = u(t,\pi) = 0,
\end{eqnarray}
where
$\{\tau_j\}$ is a sequence of real numbers with uniformly almost periodic sequences of differences, $\tau_{j+1} - \tau_j \ge \theta \ge 1/2,$

$\{b_j\}$ and $\{c_j\}$ are almost periodic sequences of real numbers,

$f(t,x)$ is almost periodic and locally Holder continuous with respect to $t$ and for every fixed $t$ belongs to $L_2(0,\pi).$

As in Example 1, denote $$X = L_2(0,\pi), \ A = - \frac{\partial^2}{\partial x^2}, \ X^1 = D(A) = H^2(0,\pi)\cap H^1_0(0,\pi).$$

The operator $A$ is sectorial with simple eigenvalues $\lambda_k = k^2$ and corresponding eigenfunctions $\varphi_k(x) = \sin kx,
k = 1,2,....$

If $u = \sum_{k=1}^\infty a_k \sin k x, \
a_k = \frac{1}{\pi}\int_0^\pi u(x)\sin kx dx,$ then
\begin{eqnarray*}
B_j u = b_j \sin x u_x = b_j \sin x \sum_{k=1}^\infty a_k k \cos k x = \frac{b_j}{2}(R-L)A^{1/2}u = b_j T A^{1/2}u,
\end{eqnarray*}
where $Ru = \sum_{k=1}^\infty a_k \sin (k-1) x$ and  $Lu = \sum_{k=1}^\infty a_k \sin (k+1) x$ are bounded shift operators
in $X.$ Hence, operators $B_j: X^{\alpha+1/2} \to X^{\alpha}$ are linear continuous, $\alpha \ge 0.$

Analogous to (\ref{evol1}), the evolution operator for homogeneous equation (\ref{2ex1}), (\ref{2ex2}) is
$$U(t,s) = e^{-A(t-s)}, \ \ {\rm if} \ \ \tau_{k} \le s \le t < \tau_{k+1},$$
and
$$U(t,s) = e^{-A(t-\tau_k)}(I + B_k)e^{-A(\tau_k - \tau_{k-1})}...(I + B_m)e^{-A(\tau_m - s)}$$
if $\tau_{m-1} \le s < \tau_m < \tau_{m+1}... \tau_k \le t < \tau_{k+1}, \ m < k, \ k, m \in {Z}.$
\vspace{2mm}

\begin{theorem}
Let $p\ln(1+b) < 1,$ where $p$ is defined by (\ref{limp}) and $b = \sup_j|b_j|.$
Then equation (\ref{2ex1}), (\ref{2ex2}) with boundary conditions (\ref{2ex3}) has a unique W-almost periodic solution
which is asymptotically stable.
\end{theorem}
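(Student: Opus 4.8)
The plan is to read this example as an instance of the abstract theory for unbounded impulse operators sketched in Section~7.6, since $B_j u = b_j\sin x\,u_x = b_j T A^{1/2}u$ is unbounded on $X^{1/2}$ but continuous as a map $X^{\alpha+1/2}\to X^{\alpha}$. First I would record the functional setting of the example: $X=L_2(0,\pi)$, $A=-\partial_{xx}$ with $\sigma(A)=\{k^2\}$, so that (H1) holds with $\delta=\lambda_1=1$, and I work in $X^{\alpha}=X^{1/2}=H^1_0(0,\pi)$. The equation is in fact linear and inhomogeneous: $f(t,x)$ is a forcing term independent of $u$ and $g_j=c_j\,x(\pi-x)$ with $x(\pi-x)\in H^2\cap H^1_0=X^1$. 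I would then verify the linear hypotheses: $\{\tau_j\}$ has uniformly almost periodic sequences of differences with $\tau_{j+1}-\tau_j\ge\theta\ge 1/2$; $f$ is W-almost periodic and locally H\"older in $t$ (a Bohr almost periodic continuous function regarded with discontinuity set $\{\tau_j\}$); $\{B_j\}$ is almost periodic in $L(X^{\alpha+1/2},X^{\alpha})$ because $\{b_j\}$ is almost periodic and $B_j$ is bounded $X^{\alpha+1/2}\to X^{\alpha}$ with norm $O(|b_j|)$ (for $\alpha=0$ directly $\|B_j u\|_{L_2}\le|b_j|\,\|u\|_{1/2}$ since $|\sin x|\le1$), which is (H4u) with $\gamma=1/2$; and $\{g_j\}\subset X^1$ is almost periodic because $\{c_j\}$ is.

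The decisive step is to prove that the homogeneous equation (\ref{2ex1}), (\ref{2ex2}) is exponentially stable in $X^{1/2}$ under $p\ln(1+b)<1$. Between impulses $\|e^{-At}\|_{L(L_2)}=e^{-t}$ and $A^{1/2}$ commutes with the semigroup, so $\|e^{-At}\|_{L(X^{1/2})}\le e^{-t}$, i.e.\ decay at rate $\delta=1$. For the jumps I would not estimate $(I+B_j)$ directly---it is unbounded on $X^{1/2}$---but rather the full-gap blocks $e^{-A\Delta_j}(I+B_j)$ that appear when the product representation of $U(t,s)$ is regrouped with each semigroup factor placed to the left of the following jump, where $\Delta_j=\tau_{j+1}-\tau_j\ge\theta\ge 1/2$. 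Using $\|B_j u\|_{L_2}=|b_j|\,\|\sin x\,u_x\|_{L_2}\le|b_j|\,\|u\|_{1/2}$ together with the elementary spectral identity
$$\|A^{1/2}e^{-A\Delta}\|_{L(L_2)}=\sup_{k\ge1}k\,e^{-k^2\Delta}=e^{-\Delta}\qquad(\Delta\ge 1/2),$$
in which the supremum is attained at $k=1$ precisely when $\Delta\ge 1/2$, I obtain
$$\|e^{-A\Delta_j}(I+B_j)u\|_{1/2}\le\|e^{-A\Delta_j}u\|_{1/2}+\|A^{1/2}e^{-A\Delta_j}B_j u\|_{L_2}\le e^{-\Delta_j}(1+|b_j|)\,\|u\|_{1/2}\le e^{-\Delta_j}(1+b)\,\|u\|_{1/2}.$$

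Telescoping over the impulses in $(s,t)$ then gives $\|U(t,s)\|_{L(X^{1/2})}\le M(1+b)^{{i}(s,t)}e^{-(t-s)}$ (the two incomplete end blocks contributing the constant $M$). Invoking (\ref{limp}), for every small $\varepsilon>0$ one has ${i}(s,t)\le(p+\varepsilon)(t-s)+C_\varepsilon$, whence
$$\|U(t,s)\|_{L(X^{1/2})}\le M_1 e^{-\beta(t-s)},\qquad \beta=1-(p+\varepsilon)\ln(1+b)>0,$$
the positivity of $\beta$ following from $p\ln(1+b)<1$ for $\varepsilon$ small. This is exponential stability (exponential dichotomy with $P(t)\equiv I$). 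With this in hand I would apply the unbounded-$B_j$ analog of Theorems~\ref{theorem2}--\ref{theorem3}: since the problem is linear, the unique W-almost periodic solution is $u_0(t)=\int_{-\infty}^t U(t,s)f(s)\,ds+\sum_{\tau_j<t}U(t,\tau_j)g_j$, its W-almost periodicity following verbatim from the proof of Theorem~\ref{thm1}; and because the equation is linear, the difference of any solution and $u_0$ solves the homogeneous equation and hence decays like $M_1 e^{-\beta(t-t_0)}$, giving asymptotic stability.

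The main obstacle is the exponential-stability step, and inside it the unbounded jumps $B_j$. The naive bound on $\|I+B_j\|$ in $X^{1/2}$ is unavailable, so one must absorb the half-derivative loss of $B_j$ into the parabolic smoothing across the gap; the hypothesis $\tau_{j+1}-\tau_j\ge\theta\ge1/2$ is exactly calibrated so that $\|A^{1/2}e^{-A\Delta}\|_{L(L_2)}=e^{-\Delta}$, which is what produces the clean per-impulse factor $(1+b)$ and lets the counting limit $p$ turn $p\ln(1+b)<1$ into a genuine negative exponential rate. A secondary point to handle carefully is the integrable singularity of $U(t,\cdot)$ created by the jump immediately preceding $t$ (reflected in the $\eta$-neighborhoods excluded in the definition of stability), which is controlled because the constructed solution is smoother than $X^{1/2}$ at pre-jump instants.
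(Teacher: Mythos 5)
Your proposal follows essentially the same route as the paper: the per-impulse block estimate $(1+|b_j|)e^{-\Delta_j}$ obtained by absorbing the half-derivative loss of $B_j=b_jTA^{1/2}$ into the semigroup over a gap of length $\ge\theta\ge 1/2$ (where $\|A^{1/2}e^{-A\Delta}\|=e^{-\Delta}$), the conversion of $p\ln(1+b)<1$ into a genuine negative exponent via (\ref{limp}), and the representation formula together with the argument of Theorem \ref{thm1} for W-almost periodicity. The one caveat is that your uniform bound $\|U(t,s)\|_{L(X^{1/2})}\le M_1e^{-\beta(t-s)}$ is overstated, since the left end block $e^{-A(t-\tau_k)}(I+B_k)$ degenerates like $(t-\tau_k)^{-1/2}$ just after an impulse --- which is why the paper keeps the singular factor $(t-s)^{-(\alpha+1/2)}$ explicit in (\ref{ex13}) and integrates it for $\alpha<1/2$ --- but you flag this singularity yourself and it does not affect the substance of the argument.
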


\begin{proof}
We show that the unique almost periodic solution of (\ref{2ex1}) and (\ref{2ex2}) is given as function
${R} \to L_2(0,\pi)$ by formula
 \begin{eqnarray*}
u_0(t) = \int_{-\infty}^{t} U(t,s)\tilde f(s) ds + \sum_{\tau_j \le t} U(t, \tau_j) \tilde g_j,
\end{eqnarray*}
where $\tilde f(t) \equiv f(t,.): {R} \to L_2(0,\pi), g_j(x) = c_j x(\pi - x), \tilde g_j = g_j(.): {Z} \to L_2(0,\pi).$

First, $u_0(t)$ is bounded in space $X^\alpha.$ If $t \in [\tau_i, \tau_{i+1})$ then
 \begin{eqnarray}
& &  \int_{-\infty}^{t} \|U(t,s)\tilde f(s)\|_\alpha  ds  \le
\int_{\tau_{i-1}}^{\tau_i} \|A^\alpha e^{-A(t-\tau_i)}(I + B_i)e^{-A(\tau_i-s)}\tilde f(s)\|ds  \nonumber\\
 & & + \int_{\tau_i}^t \| A^\alpha e^{-A(t-s)}\tilde f(s)\|ds + \sum_{k=2}^\infty \int_{\tau_{i-k}}^{\tau_{i-k+1}}
 \|A^\alpha e^{-A(t-\tau_i)}(I + B_i)e^{-A(\tau_i - \tau_{i-1})}\|  \nonumber\\
 & & \times \prod_{j=i-1}^{i-k+2}\|(I + B_{j})e^{-A(\tau_{j} - \tau_{j-1})}\|
 \|(I + B_{i-k+1})e^{-A(\tau_{i-k+1} - s)}\tilde f(s)\|ds,     \label{ex11}
\end{eqnarray}

Next, we need the following inequality (see \cite{RT}), p. 35):
 \begin{eqnarray} \label{ex12}
\|A^\alpha T A^\beta e^{-At}\| = \frac12\|A^\alpha (R-L)A^\beta e^{-At}\|
\le \frac{4^\alpha +1}{2}\|A^{\alpha +\beta} e^{-At}\|.
\end{eqnarray}
Then by (\ref{ex12}):
 \begin{eqnarray}
& &  \|A^\alpha e^{-A(t-\tau_i)}(I + B_i)e^{-A(\tau_i-s)}\| \le \|A^\alpha e^{-A(t-s)}\| + \frac{5}{2} \|A^{\alpha +1/2} e^{-A(t-s)}\|  \nonumber\\
& &  \le \left( C_\alpha (t - s)^{-\alpha} + \frac{5}{2}C_{\alpha + 1/2} (t - s)^{-(\alpha+1/2)}\right) e^{-\delta(t-s)}. \label{ex13}
\end{eqnarray}

From Henry \cite{H}, p. 25, we have
$$\|A^\alpha e^{-At}\psi\| < b_\alpha(t)\|\psi\|,$$
where $b_\alpha(t) = (te/\alpha)^{-\alpha}$ if $0 < t \le \alpha/\lambda_1,$
and $b_\alpha(t) = \lambda_1^\alpha e^{-\lambda_1 t}$ if $t \ge \alpha/\lambda_1.$
Since $\|T\| = 1,$ and $\lambda_1 =1,$ we have
 \begin{eqnarray*}
& & \|(I+B_j)e^{-A(\tau_j - \tau_{j-1})}\| \le \|e^{-A(\tau_j - \tau_{j-1})}\| +
|b_j|\|A^{1/2}e^{-A(\tau_j - \tau_{j-1})}\|  \nonumber\\
& & \le (1+|b_j|)e^{-(\tau_j - \tau_{j-1})}. 
\end{eqnarray*}

Let $0 <\varepsilon_1 < 1 - p\ln(1+b).$ Then there exists a positive integer $k_1$ such that for $k \ge k_1$
$$\frac{{i}(\tau_{i-k}, \tau_{i})}{\tau_{i} - \tau_{i-k}} \ln(1+b) - 1 < -\varepsilon_1.$$
Denote
$N_1 = \max_{1 \le k \le k_1}\exp\left({i}(\tau_{i-k}, \tau_{i})\ln(1+b) - (\tau_i - \tau_{i-k})\right).$
Then
\begin{eqnarray}
& & \prod_{j=i-k+1}^{i}\|(I+B_{j})e^{-A(\tau_j - \tau_{j-1})}\| \le (1+b)^{{i}(\tau_{i-k}, \tau_{i})} e^{-(\tau_{i} -
\tau_{i-k})}  \nonumber\\
& & \le N_1 e^{-\varepsilon_1(\tau_{i} - \tau_{i-k})} \le N_1 e^{-\varepsilon_1\theta k}. \label{ex15}
\end{eqnarray}
For $t \in (\tau_i, \tau_{i+1}),$ by (\ref{ex13}) and (\ref{ex15}) we get
\begin{eqnarray*}
& & \| U(t, \tau_{i-k})\|_\alpha \le \|A^\alpha e^{-A(t-\tau_i)}(I + B_i)e^{-A(\tau_i - \tau_{i-1})}\| \times \\
 & & \times \prod_{j=i-k+1}^{i-1}\|(I+B_{j})e^{-A(\tau_j - \tau_{j-1})}\| \le K_1 e^{-\varepsilon_1(t - \tau_{i-k})}
\end{eqnarray*}
with constant $K_1$ independent of $t$ and $\tau_{i-k}.$

Using the last inequality, we obtain the boundedness of
$\|u_0(t)\|_\alpha$.
We can now proceed analogously to the proof of Theorem 1 and show the almost periodicity of $u_0(t).$
\end{proof}


\begin{thebibliography}{20}
\bibitem{A} Akhmet, M.: Principles of Discontinuous Dynamical Systems. Springer, New York (2010)

\bibitem{AP} Akhmetov, M.U., Perestyuk, N.A.: Periodic and almost periodic solutions of strongly nonlinear impulse systems.
J. Appl. Math. Mech. {\bf 56}, 829–-837 (1992)

\bibitem{AP2} Akhmetov, M.U., Perestyuk, N.A.: Almost periodic solutions of nonlinear systems.
 Ukr. Math. J. {\bf 41}, 291--296 (1989)

\bibitem{BV}  Barreira, L., Valls, C.: Robustness for impulsive equations. Nonlinear Anal. {\bf 72}, 2542--2563 (2010)

\bibitem{ChL} Chow, S.-N., Leiva, H.: Existence and roughness of the exponential dichotomy for
skew-product semiflow in Banach spaces. J. Differ. Equ. {\bf 120}, 429--477 (1995)

 \bibitem{HPTT} Hakl R., Pinto M., Tkachenko V., Trofimchuk S.: Almost periodic evolution systems with impulse action at state-dependent moments.
 ArXiv preprint math/1511.07833 (2015)

\bibitem{HW}
Halanay, A., Wexler, D.: The Qualitative Theory of Systems with Impulse. Nauka, Moscow (1971)

\bibitem{HAR} Henriquez, H.R., De Andrade, B., Rabelo, M.: Existence of almost periodic solutions for a class of abstract
impulsive differential equations. ISRN Math. Anal., Article ID 632687, 21 pp. (2011)

\bibitem{H} Henry, D.: Geometric Theory of Semilinear Parabolic Equations,
 Lecture Notes in Mathematics, vol. 840, Springer, Berlin (1981)

\bibitem{KRT} Kmit, I., Recke, L., Tkachenko, V.I.:
Robustness of exponential dichotomies of boundary-value problems for general first-order hyperbolic systems.
Ukr. Math. J. {\bf 65,} 236--251 (2013)

\bibitem{LBS} Lakshmikantham, V., Bainov, D., Simeonov, P.S.: Theory of Impulsive Differential Equations. World Scientific, Singapure (1989)

\bibitem{MT1} Myslo, Y.M., Tkachenko, V.I.: Global attractivity in almost periodic single species models.
Funct. Differ. Equ. {\bf 18}, 269--278 (2011)

\bibitem{MT2} Myslo, Y.M., Tkachenko, V.I.: Almost periodic solutions of Mackey-Glass equations with pulse action.
 Nonlinear Oscillations {\bf 14}, 537--546 (2012)

\bibitem{NP} Naulin, R., Pinto, M.: Splitting of linear systems with impulses. Rocky Mt. J. Math. {\bf 29},
1067--1084 (1999)

\bibitem{PR} Pinto, M., Robledo, G.: Existence and stability of almost periodic solutions in impulsive neural network models.
Appl. Math. Comput. {\bf 217}, 4167–-4177 (2010)

\bibitem{PS}  Pliss, V.A., Sell, G.R.:
Robustness of exponential dichotomies in infinite-dimensional dynamical systems.
J. Dyn. Diff. Equat. {\bf 11}, 471--513 (1999)

\bibitem{RT} Rogovchenko, Yu.V., Trofimchuk, S.I.: Periodic solutions of weakly nonlinear partial differential equations
of parabolic type with impulse action and their stability. Akad. Nauk Ukrain. SSR Inst. Mat., vol. 65, 44 pp (1986, Preprint)

\bibitem{SP}
Samoilenko, A.M., Perestyuk, N.A.: Impulsive Differential Equations. World Scientific, Singapore (1995)


\bibitem{SPA} Samoilenko, A.M.; Perestyuk, N.A.; Akhmetov, M.U.:
Almost periodic solutions of differential equations with impulse action.
Akad. Nauk Ukrain. SSR Inst. Mat., vol. 26, 49 pp (1983, Preprint)

\bibitem{SPT} Samoilenko, A.M., Perestyuk, N.A., Trofimchuk, S.I.: Generalized solutions of impulse systems, and the beats phenomenon.
Ukr. Math. J. {\bf 43}, 610–-615 (1991)

\bibitem{STr} Samoilenko, A.M., Trofimchuk,S.I.: Almost periodic impulse systems. Differ. Equ. {\bf 29}, 684-–691 (1993)

\bibitem{S} Stamov, G.T.:
Almost Periodic Solutions of Impulsive Differential Equations.
Lecture Notes in Mathematics, vol. 2047. Springer, Berlin (2012)

\bibitem{SA} Stamov, G.T., Alzabut, J.O.:  Almost periodic solutions for abstract impulsive differential equations.
Nonlinear Anal. {\bf 72}, 2457–-2464 (2010)

\bibitem{Tk} Tkachenko, V.I.: On linear almost periodic pulse systems. Ukr. Math. J. {\bf 45}, 116–-125 (1993)

\bibitem{T1} Tkachenko, V.I.: On the exponential dichotomy of pulse evolution systems. Ukr. Math. J. {\bf 46}, 441--448 (1994)

\bibitem{T2} Tkachenko, V.I.: On multi-frequency systems with impulses. Neliniyni Kolyv. {\bf 1}, 107-–116 (1998)

\bibitem{T3} Tkachenko, V.I.: Almost periodic solutions of parabolic type equations with impulsive action. Funct. Diff. Equ.
{\bf 21}, 155--170 (2014)

\bibitem{Tr} Trofimchuk, S.I.: Almost periodic solutions of linear abstract impulse systems. Differ. Equ. {\bf 31}, 559–-568 (1996)


\end{thebibliography}
\end{document}